\newcommand {\uu}  { {\bf u} }
\newcommand {\zz}  { {\bf z} }
\newcommand {\xx}  { {\bf x} }
\newcommand{\R}{{\rm I\!R}}
\newcommand {\veps} {\varepsilon}
\newcommand {\Ex} { {\mathbb E} }
\newcommand {\qq}  { {\bf q} }
\newcommand {\mm}  { {\bf m} }
\newcommand {\ww}  { {\bf w} }
\newcommand {\ff}  { {\bf f} }
\newcommand {\dd}  { {\bf d} }
\newcommand{\hf}{\frac12}
\renewcommand{\vec}[1]{\ensuremath{\mathbf{#1}}}
\newcommand{\grad}{\ensuremath {\vec \nabla}}
\renewcommand{\div}{\nabla\cdot\,}
\newcommand{\defeq}{\mathrel{\mathop:}=}
\renewcommand{\thesubfigure}{(\alph{subfigure})} 
\newtheorem{theorem}{Theorem}
\newtheorem{corollary}[theorem]{Corollary}
\newtheorem{lemma}[theorem]{Lemma}
\newenvironment{proof}[1][Proof]{\begin{trivlist}
\item[\hskip \labelsep {\bfseries #1}]}{\end{trivlist}}
\newcommand{\qed}{\nobreak \ifvmode \relax \else
      \ifdim\lastskip<1.5em \hskip-\lastskip
      \hskip1.5em plus0em minus0.5em \fi \nobreak
      \vrule height0.75em width0.5em depth0.25em\fi}
\begin{document}

\title{Assessing stochastic algorithms for large scale nonlinear least squares 
problems using extremal probabilities of linear combinations of gamma random variables} 
\author{Farbod Roosta-Khorasani\thanks{Dept. of Computer Science, University of British Columbia, Vancouver, Canada.
{\tt farbod/ascher@cs.ubc.ca}. The work of these authors was partially funded by NSERC grant 84306.} \and G\'{a}bor J. Sz\'{e}kely\thanks{National Science Foundation, Arlington, Virginia. {\tt gszekely@nsf.gov} and Alfr\'{e}d R\'{e}nyi Institute of Mathematics, Hungarian Academy of Sciences,  Budapest, Hungary.} \and Uri M. Ascher\footnotemark[1]}
\maketitle
\begin{abstract}
This article considers stochastic algorithms for efficiently solving a class of 
large scale non-linear least squares (NLS) problems which frequently arise in applications. 
We propose eight variants of a practical randomized algorithm where the uncertainties in the major 
stochastic steps are quantified. Such stochastic steps involve approximating the NLS 
objective function using Monte-Carlo methods, and this is
equivalent to the estimation of the trace of corresponding symmetric positive semi-definite (SPSD) matrices. 
For the latter, we prove \textit{tight} \textit{necessary} and \textit{sufficient} conditions on the 
sample size (which translates to cost) 
to satisfy the prescribed probabilistic accuracy. We show that these conditions are practically computable 
and yield small sample sizes. 
They are then incorporated in our stochastic algorithm to quantify the uncertainty in each randomized step. 
The bounds we use
are applications of more general results regarding extremal tail probabilities of linear combinations of gamma distributed random variables. We derive and prove new results concerning the maximal and 
minimal tail probabilities of such linear combinations, which can be considered 
independently of the rest of this paper.
\end{abstract}


%

\pagestyle{myheadings}
\thispagestyle{plain}



\section{Introduction}
\label{sec:intro}
Large scale data fitting problems arise often in many applications in science and engineering. 
As the ability to gather larger amounts of data increases, 
the need to devise algorithms to efficiently solve such problems becomes more important. 
The main objective here is typically to recover
some model parameters,
and it is a widely accepted working assumption that having more data can only help 
(at worst not hurt) the model recovery.

Consider the system\footnote{In this paper, we use bold lower case to denote vectors
and regular capital letters to denote matrices.}
\begin{equation}
\dd_{i}  = \ff_{i}(\mm) + \bm{\eta}_{i}, \; i = 1,2, \ldots , s,	
\label{general_forward_problem}
\end{equation}
where $\dd_{i} \in \mathbb{R}^{l}$ is the measurement data obtained in the $i^{th}$ experiment, 
$\ff_{i} = \ff_i(\mm) $ is the known forward operator (or data predictor) for the $i^{th}$ experiment, 
$\mm \in \mathbb{R}^{l_{m}}$ is the sought-after parameter vector\footnote{
The parameter vector $\mm$ often arises from a parameter function in several space variables projected onto a discrete grid and reshaped into a vector.}, 
and $\bm{\eta}_{i}$ is the noise incurred in the $i^{th}$ experiment. 
The total number of experiments, or data sets, is assumed large: $s \gg 1$.
The goal is to find (or infer) the unknown model, $\mm$, from the measurements $\dd_i$, $i = 1, 2, \ldots , s$. 
Generally, this problem 
can be ill-posed. 
Various approaches, including different regularization techniques, 
have been proposed to alleviate this ill-posedness; see, e.g.,~\cite{vogelbook,ehn1}.

In this paper we assume that
the forward operators have the form 
\begin{equation}
\ff_{i}(\mm) = \ff(\mm,\qq_{i}), \quad i = 1, \ldots , s, 
\label{forward_op}
\end{equation}
where $\qq_{i}$ are inputs 
such that the $i^{th}$ data set, $\dd_{i}$, is measured after injecting the 
$i^{th}$ input (or source) $\qq_{i}$ into the system. 
Thus, for an input $\qq_{i}$, $\ff(\mm,\qq_{i})$ predicts the $i^{th}$ measurement, 
given the underlying model $\mm$. 
We only consider a special case where $\qq_{i} \in \mathbb{R}^{l_{q}}, \forall i$, and $\ff$
is linear in $\qq$, i.e.,
$\ff(\mm,w_{1} \qq_{1} + w_{2} \qq_{2}) = w_{1} \ff(\mm,\qq_{1}) + w_{2} \ff(\mm,\qq_{2})$. 
Alternatively, we write $\ff(\mm,\qq) = G(\mm)\qq$, 
where $G \in \mathbb{R}^{l \times l_{q}}$ is a matrix that depends non-linearly on the sought $\mm$. 
%
We also assume that the task of evaluating $\ff$ for each input, $\qq_{i}$, is computationally expensive. 
Examples of such a situation arise frequently in PDE constrained inverse problems with many data sets; 
see, e.g.,~\cite{HaberChungHermann2010,doas3,rodoas1} and references therein. 


Under the further assumption that the independent noise satisfies\footnote{
For notational simplicity, we do not distinguish between a random vector (e.g., noise) and its realization, as they are clear within the context in which they are used.}
$\bm{\eta}_{i} \sim \mathcal{N}(0,\sigma \mathbb{I}), \forall i$, where $\mathcal{N}$ denotes normal distribution, 
$\mathbb{I} \in \mathbb{R}^{l \times l}$ denotes the identity matrix and $\sigma > 0$, 
the standard \textit{maximum likelihood} (ML) approach leads to minimizing the $\ell_2$ misfit function 
\begin{equation}
\phi(\mm) \defeq \sum_{i=1}^{s} \| \ff(\mm,\qq_{i}) - \dd_{i} \|_{2}^{2}.
\label{misfit}
\end{equation}
However, since the above inverse problem is typically ill-posed, 
a regularization functional, $R(\mm)$, is often added to the above objective, thus minimizing instead
\begin{equation} 
\phi_{R,\alpha}(\mm) \defeq  \phi(\mm) + \alpha R(\mm) ,
\label{objective}
\end{equation}
where $\alpha$ is a regularization parameter~\cite{ehn1}. 
In general, this regularization term can be chosen using a priori knowledge of the desired model. 
The objective functional~\eqref{objective} coincides with the \textit{maximum a posteriori (MAP)} formulation.  
Implicit regularization 
also exists in which there is no explicit term $R(\mm)$ in the objective~\cite{hansen1998,doas3}. 
Various optimization techniques can be used to decrease the value of the above objective functionals,~\eqref{misfit} 
or~\eqref{objective}, to a desired level (determined, e.g., by a given tolerance which depends on the noise level), 
thus recovering the sought-after model.

Algorithms that rely on efficiently approximating the misfit function $\phi(\mm)$ have been proposed
and studied in~\cite{HaberChungHermann2010, doas3, rodoas1, rodoas2, roas1}. In effect,
they draw upon estimating the trace of an implicit\footnote{By ``implicit matrix'' we mean that 
the matrix of interest is not available explicitly: only information in the form of matrix-vector products for any appropriate vector is available.} symmetric positive semi-definite (SPSD) matrix. 
To see this, rewrite~\eqref{misfit} as
\begin{equation}
\phi(\mm) = \| F(\mm) - D \|_{F}^{2},
\label{misfit_mat}
\end{equation}
where $F(\mm)$ and D are $l \times s$ matrices whose $i^{th}$ columns are, respectively, $\ff(\mm,\qq_{i})$ and $\dd_{i}$,
and $\| \cdot \|_F$ stands for the Frobenius norm. 
Now, letting $B = B(\mm) \defeq F(\mm) - D$, it can be shown that 
\begin{equation}
\phi(\mm) = \| B \|_{F}^{2} = tr (B^{T} B) = \Ex(\| B \ww \|_{2}^{2}),
\label{frob_trace}
\end{equation}
where $\ww$ is a random vector drawn from any distribution satisfying $\Ex(\ww \ww^{T}) = \mathbb{I}$, 
$tr(A)$ denotes the trace of the matrix $A$, and $\Ex$ denotes the expectation. 
Hence, approximating the misfit function~$\phi(\mm)$ in~\eqref{misfit} or  
in~\eqref{objective} is equivalent to approximating the corresponding matrix trace (or equivalently, approximating the above expectation). 
The standard approach for doing this is based on a Monte-Carlo method, 
where one generates $n$ random vector realizations, $\ww_{j}$, from a suitable probability distribution 
and computes the empirical mean
\begin{equation}
\widehat{\phi} (\mm,n) \defeq \frac{1}{n} \sum_{j=1}^{n} \|B(\mm) \ww_{j}\|_{2}^{2} \approx \phi(\mm).
\label{approx_phi}
\end{equation}
Note that $\widehat{\phi}(\mm,n)$ is an {\em unbiased estimator} of $\phi(\mm)$, as we have 
$\phi(\mm) = \Ex (\widehat{\phi}(\mm,n))$.
For the special case of the forward operators \eqref{forward_op} considered in this paper,
if $n \ll s$ then 
this procedure yields a very efficient algorithm for approximating the misfit~\eqref{misfit}, because 
\begin{equation*}
\sum_{i=1}^{s} \ff(\mm,\qq_{i}) w_{i}  = \ff(\mm,\sum_{i=1}^{s} \qq_{i} w_{i}) ,
\end{equation*}
which can be computed with a single evaluation of $\ff$ per realization of the random vector 
$\ww = (w_1, \ldots , w_s)^T$.


Our assumption regarding the noise distribution leading to the ordinary least squares misfit function~\eqref{misfit},
although standard, is quite simplistic. Fortunately, however, it can be readily generalized in one
of the following two ways.
\begin{enumerate}
	\item 
The noise is independent and identically distributed (i.i.d) as 
	$\bm{\eta}_{i} \sim \mathcal{N}(0,\Sigma ), \forall i$, where $\Sigma \in \mathbb{R}^{l \times l}$ is the 
	symmetric positive definite covariance matrix. 
In this case, 
the ML approach leads to minimizing the $\ell_2$ misfit function 
\begin{equation}
\phi_{(1)}(\mm) \defeq \sum_{i=1}^{s} \| C^{-1} \big(\ff(\mm,\qq_{i}) - \dd_{i} \big)\|_{2}^{2},
\label{misfit_iida}
\end{equation}
where $C \in \mathbb{R}^{l \times l}$ is any invertible matrix such that $\Sigma = C C^{T}$ 
(e.g., $C$ can be the Cholesky factor of $\Sigma$).
Thus,	
\begin{equation*}
\phi_{(1)}(\mm) = \| C^{-1} \big( F(\mm) - D \big) \|_{F}^{2} = \| B(\mm) \|_{F}^{2},
\end{equation*}
with $B(\mm) \defeq C^{-1} \big( F(\mm) - D \big)$.
The Monte-Carlo approximation $\widehat{\phi}_{(1)}(\mm,n)$ is then precisely
as in~\eqref{approx_phi} but with the newly defined $B(\mm)$.
\item 
The noise is independent but \textit{not} identically distributed,
	satisfying instead $\bm{\eta}_{i} \sim \mathcal{N}(0,\sigma^{2}_{i} \mathbb{I}), i = 1,2,\ldots,s$, where
	$\sigma_{i} > 0$ are the standard deviations. 
	Under this assumption, the ML approach yields the \textit{weighted least squares} misfit function
\begin{equation}
\phi_{(2)}(\mm) \defeq \sum_{i=1}^{s} \frac{1}{\sigma^{2}_{i}}\| \ff(\mm,\qq_{i}) - \dd_{i} \|_{2}^{2}.
\label{misfit_inid}
\end{equation}
We can further write this equation as
\begin{equation*}
\phi_{(2)}(\mm) = \| \big( F(\mm) - D \big) C^{-1}\|_{F}^{2},
\end{equation*}
where $C \in \mathbb{R}^{s \times s}$ denotes the diagonal matrix whose $i^{th}$ diagonal element is $\sigma_{i}$.
Thus, with $B(\mm) = ( F(\mm) - D) C^{-1}$ we can again apply~\eqref{approx_phi}
to obtain a similar Monte-Carlo approximation $\widehat{\phi}_{(2)}(\mm,n)$. 
\end{enumerate}
Now, if $n \ll s$ then the unbiased estimators $\widehat{\phi}_{(1)}(\mm,n)$ and $\widehat{\phi}_{(2)}(\mm,n)$
are obtained with a similar efficiency as $\widehat{\phi} (\mm,n)$.
In the sequel, for notational simplicity, we just concentrate on $\phi(\mm)$ and $\widehat{\phi}(\mm,n)$, but all the results 
hold almost verbatim also for~\eqref{misfit_iida} and~\eqref{misfit_inid}.


Hence, the objective is to be able to generate as few realizations of $\ww$ as possible for
achieving acceptable approximations to the misfit function. Estimates on how large $n$ must be
to achieve a prescribed accuracy in a probabilistic sense have been derived in
\cite{achlioptas,avto,yori,roas1}.
However, the obtained bounds are typically not sufficiently tight to be practically useful.
In the present paper, we prove \textit{tight} bounds for tail probabilities for such 
Monte-Carlo approximations employing 
the standard normal distribution. 
These tail bounds are then used to obtain \textit{necessary} and \textit{sufficient} bounds on $n$, 
and we demonstrate that these bounds can be practically small and computable. 
Furthermore, using these results, we are able to better quantify the uncertainties in the highly efficient randomized algorithms proposed in~\cite{doas3,rodoas1,rodoas2}. 
Variants of such algorithms with better uncertainty quantification are derived.

This paper is organized as follows. 
In Section~\ref{sec:trace}, we develop and state theorems regarding the tight tail bounds promised above.
The theory in this section relies upon some novel results regarding the extremal probabilities 
(i.e., maxima and minima of the tail probabilities) of non-negative linear combinations of gamma random variables,
which are proved in Appendix~\ref{sec:appendix}. 

In Section~\ref{sec:alg_intro} we present our stochastic algorithm variants for approximately minimizing 
\eqref{misfit} or~\eqref{objective} and discuss its novel elements. 
Subsequently in Section~\ref{sec:application}, the efficiency of the proposed algorithm variants is demonstrated using an important class of problems that arise often in practice. This is followed by conclusions and further thoughts in Section~\ref{sec:conc}. 


  
\section{Matrix trace estimation}
\label{sec:trace}
Let the matrix $A=B^TB \in \mathbb{R}^{s \times s}$ be implicit SPSD, and denote its trace by $tr(A)$. 
As described in Section~\ref{sec:intro},  we approximate $tr(A)$ by
\begin{equation}
tr_{n}(A) \defeq \frac{1}{n} \sum_{j=1}^{n} \ww_{j}^{T} A \ww_{j},
\label{trace_approx}
\end{equation}
where $\ww_{j} \in \mathbb{R}^{s} \sim \mathcal{N}(0,\mathbb{I})$.

Now, given a pair of small positive real numbers $(\veps,\delta)$, 
consider finding an appropriate sample size $n$ such that
\begin{subequations}
\begin{eqnarray}
\Pr\Big( tr_{n}(A) \geq (1-\veps) tr(A) \Big) \geq 1-\delta \label{prob_ineq_lower}, \\
\Pr\Big( tr_{n}(A) \leq (1+\veps) tr(A) \Big) \geq 1-\delta \label{prob_ineq_upper}.
\end{eqnarray}
\label{prob_ineq_lower_upper}
\end{subequations}
In~\cite{roas1} we showed that the inequalities~\eqref{prob_ineq_lower_upper}
hold if 
\begin{equation}
n > 8c, \quad {\rm where~} c = c(\veps,\delta) = \veps^{-2}\ln (1 / \delta).
\label{loose_bnd}
\end{equation}
However, this bound on $n$ can be rather pessimistic. 
Theorems~\ref{main_trace_theorem_lower} and~\ref{main_trace_theorem_upper} and Corollary
\ref{main_trace_corollary} below provide
tighter and hopefully more useful bounds on $n$.
In order to prove these we require the two additional Theorems~\ref{monotonicity_gamma_theorem} and
\ref{extremal_prob_thm}, whose nontrivial and more technical
proofs are deferred to Appendix~\ref{sec:appendix}. Let $X \sim Gamma(\alpha,\beta)$ denote a gamma distributed random variable (r.v) parametrized by shape $\alpha$ and rate $\beta$\footnote{Recall that the probability density function of such r.v is 
\begin{eqnarray*} 
f(x) = \begin{cases}
\frac{\beta^{\alpha}}{\Gamma(\alpha)} x^{\alpha - 1 } e^{-\beta x}  &\text{$x \ge 0$} , \cr
0 &\text{$x < 0$} .\end{cases} 
\end{eqnarray*}}.
%
%
\vspace{0.5cm}
\begin{theorem}[\textbf{Monotonicity of cumulative distribution function of gamma r.v}]
Given parameters $ 0 < \alpha_{1} < \alpha_{2}$, let $X_{i} \sim	Gamma(\alpha_{i},\alpha_{i}),\; i=1,2$, be independent r.v's, and define $$\Delta(x) \defeq \Pr(X_{2} < x) - \Pr(X_{1} < x). \label{diff_prob_ga}$$ Then we have that
\begin{enumerate}[(i)]
		\item there is a unique point $x(\alpha_{1},\alpha_{2})$ such that $\Delta(x) < 0$ for $0 < x < x(\alpha_{1},\alpha_{2})$ and $\Delta(x) > 0$ for $x > x(\alpha_{1},\alpha_{2})$, 
		\item $1 \leq x(\alpha_{1},\alpha_{2}) \leq \frac {2  \sqrt{\alpha_{1} ( \alpha_{2} - \alpha_{1}) } + 1}{2 \sqrt{\alpha_{1} ( \alpha_{2} - \alpha_{1}) } }$.
\end{enumerate}
\label{monotonicity_gamma_theorem}
\end{theorem}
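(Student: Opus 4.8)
The plan is to work throughout with the densities $f_i$ of $X_i\sim Gamma(\alpha_i,\alpha_i)$, observing that taking shape $=$ rate forces $\Ex(X_i)=1$ for both $i$, and to reduce everything to the sign behaviour of $\Delta=F_2-F_1$ (with $F_i$ the CDF) and of $g\defeq\Delta'=f_2-f_1$.

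For part (i), I would first examine the log--density ratio
\[
h(x)\defeq\ln\frac{f_2(x)}{f_1(x)}=\mathrm{const}+(\alpha_2-\alpha_1)\,(\ln x-x),\qquad x>0 .
\]
Since $\alpha_2-\alpha_1>0$ and $\ln x-x$ is strictly increasing on $(0,1)$, strictly decreasing on $(1,\infty)$, and tends to $-\infty$ at both ends, $h$ is unimodal with a single maximum at $x=1$ and $h\to-\infty$ as $x\to0^+$ and $x\to\infty$. Hence $g=f_2-f_1$ can vanish at most twice; because $\int_0^\infty g=0$ while $g\not\equiv0$, it vanishes exactly twice, say at $x_1<x_2$, and (as $h(1)>0$, since otherwise $f_2\le f_1$ everywhere, forcing equality of two densities that integrate to the same value) one in fact has $x_1<1<x_2$ with $g<0$ on $(0,x_1)$, $g>0$ on $(x_1,x_2)$, and $g<0$ on $(x_2,\infty)$. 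Integrating, $\Delta$ is decreasing, then increasing, then decreasing, with $\Delta(0)=\Delta(\infty)=0$; thus $\Delta<0$ on $(0,x_1]$, rises through a single zero $x^*$ on $(x_1,x_2)$, and stays strictly positive on $(x_2,\infty)$. This gives a unique sign change of $\Delta$ from negative to positive, which defines $x(\alpha_1,\alpha_2)=x^*$ and proves (i).

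For part (ii) I would exploit (i). Because $\Delta<0$ on $(0,x^*)$ and $\Delta>0$ on $(x^*,\infty)$, the two claimed inequalities are equivalent to the single--point sign conditions $x^*\ge1\iff\Delta(1)\le0$ and $x^*\le x_U\iff\Delta(x_U)\ge0$, where $x_U\defeq1+\tfrac{1}{2\sqrt{\alpha_1(\alpha_2-\alpha_1)}}$; so it remains only to certify these two signs. The lower bound $\Delta(1)\le0$ is precisely the statement that $\alpha\mapsto\Pr(Gamma(\alpha,\alpha)<1)$ is non-increasing. I would prove this by passing to $Z=\ln X$, whose density is proportional to $e^{\alpha\phi(z)}$ with $\phi(z)=z-e^z$ attaining its unique maximum at $z=0$; differentiating $\Pr(Z<0)$ in $\alpha$ reduces the claim to the positivity of $\int_0^\infty\bigl(a+\phi(z)\bigr)\,\rho(z)\,dz$ with $a=1+\ln\alpha-\psi(\alpha)$, which I would extract from the reflection inequality $\phi(-u)>\phi(u)$ (equivalently $\sinh u>u$) together with the elementary digamma bound $\psi(\alpha)<\ln\alpha$.

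The upper bound $\Delta(x_U)\ge0$, equivalent to the sharp tail comparison $\Pr(X_2>x_U)\le\Pr(X_1>x_U)$, is where the real difficulty lies and is the step I expect to be the main obstacle. It is tempting to argue $x^*\le x_2$ and then bound the density--crossing point $x_2$, but this fails: $x_2$ solves $\ln x-x=\tfrac{1}{\alpha_2-\alpha_1}\bigl[\alpha_1\ln\alpha_1-\alpha_2\ln\alpha_2+\ln(\Gamma(\alpha_2)/\Gamma(\alpha_1))\bigr]$ and can exceed $x_U$ substantially, so the CDF--crossing $x^*$ must be controlled directly rather than through $x_2$. The scale $\tfrac{1}{2\sqrt{\alpha_1(\alpha_2-\alpha_1)}}$ is exactly the size of the leading skewness correction to the symmetric Gaussian crossing at $x=1$; accordingly, the plan is to develop a second--order expansion of $\Delta$ near $x=1$ in the variable $z=\ln x$ (a Laplace/Edgeworth-type analysis of $e^{\alpha\phi}$) and then to bound the remainder sharply enough to guarantee $\Delta(x_U)\ge0$. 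Making these remainder estimates rigorous and uniform in $(\alpha_1,\alpha_2)$ is the crux of the argument.
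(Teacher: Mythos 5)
Your part (i) is correct and is essentially the paper's own argument: the paper factors $\Delta'(x)=C(\alpha_{2})x^{\alpha_{1}-1}e^{-\alpha_{1}x}\left(x^{c}e^{-cx}-C(\alpha_{1})/C(\alpha_{2})\right)$ and uses the unimodality of $x^{c}e^{-cx}$, which is exactly your log-density-ratio unimodality, leading to the same decrease--increase--decrease structure of $\Delta$ and the unique crossing. Your reduction of part (ii) to the two single-point conditions $\Delta(1)\leq 0$ and $\Delta(x_{U})\geq 0$ is also legitimate given (i). The problem is that neither of those two sign conditions is actually established, and the second one is the entire content of the theorem.

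For the lower bound, the claimed extraction from $\sinh u>u$ and $\psi(\alpha)<\ln\alpha$ does not go through as stated. Writing $a=1+\ln\alpha-\psi(\alpha)$ and $\phi(z)=z-e^{z}$, what you need is $\int_{0}^{\infty}\bigl[(a+\phi(u))e^{\alpha\phi(u)}-(a+\phi(-u))e^{\alpha\phi(-u)}\bigr]\,du\geq 0$ (using that the integral over all of $\mathbb{R}$ vanishes), and a Taylor expansion at $u=0$ shows this integrand equals $-\tfrac{u^{3}}{3}\bigl[1+\alpha(a-1)\bigr]e^{-\alpha-\alpha u^{2}/2}+O(u^{4})$, which is strictly negative near the origin; so no pointwise use of the reflection inequality can work, and a genuine integral estimate is missing. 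More decisively, for the upper bound $\Delta(x_{U})\geq 0$ you offer only a program (an Edgeworth-type expansion near $x=1$ with remainder bounds uniform in $(\alpha_{1},\alpha_{2})$) and you yourself flag it as the unproven crux; this is precisely where the exact constant $1+\tfrac{1}{2\sqrt{\alpha_{1}(\alpha_{2}-\alpha_{1})}}$ must emerge, and an asymptotic expansion does not by itself yield a bound valid for all $0<\alpha_{1}<\alpha_{2}$. The paper's mechanism for this step is quite different and avoids expansions entirely: it interpolates $Y(t)=tX_{1}+(1-t)\tilde{X}$ with $\tilde{X}\sim Gamma(c\alpha_{1},c\alpha_{1})$, $c=(\alpha_{2}-\alpha_{1})/\alpha_{1}$, so that $Y(1)=X_{1}$ and $Y(1/(1+c))$ has the law of $X_{2}$, and then uses a Laplace-transform identity to show
\begin{equation*}
\frac{d}{dt}\Pr\bigl(Y(t)\leq x\bigr)=\frac{(1+c)t-1}{c\alpha_{1}}\,\frac{d^{2}}{dx^{2}}\Pr\Bigl(Y(t)+t\psi_{1}+\tfrac{1-t}{c}\psi_{2}<x\Bigr),\qquad \psi_{i}\sim Gamma(1,\alpha_{1}),
\end{equation*}
so that the sign of the $t$-derivative of the CDF is governed by the derivative of the unimodal density of the smoothed variable; Lemma~\ref{lemma_2*} then bounds the mode of that density between $1$ and $\frac{2\sqrt{\alpha_{1}(\alpha_{2}-\alpha_{1})}+1}{2\sqrt{\alpha_{1}(\alpha_{2}-\alpha_{1})}}$ uniformly in $t$, and integrating in $t$ yields both sign statements at once. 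Without a substitute for this mode-bound mechanism (or fully rigorous, uniform remainder estimates in your expansion), your proposal leaves part (ii) unproved.
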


\begin{theorem}[\textbf{Extremal probabilities of linear combination of gamma r.v's}]
Given shape and rate parameters $\alpha , \beta > 0$, 
let $X_{i} \sim	Gamma(\alpha,\beta),\; i=1,2,\ldots,n$, be i.i.d gamma r.v's, and define $$\Theta \defeq \{ \bm{\lambda} = \big(\lambda_{1}, \lambda_{2}, \ldots, \lambda_{n}\big)^T \; | \; \lambda_i \ge 0 \; \forall i, \; \sum_{i=1}^{n} \lambda_{i} = 1\}.$$
Then we have
\begin{eqnarray*}
m_{n}(x) &\defeq& \min_{\bm{\lambda} \in \Theta} \Pr \left(\sum_{i=1}^{n}   \lambda_{i}  X_{i} < x\right)
=\begin{cases}
\Pr \Big(\frac{1}{n}\sum_{i=1}^{n} X_{i} < x \Big),~ &  x < \frac{\alpha}{\beta} \cr
\Pr \Big(X_{1} < x \Big),~&   x > \frac{2\alpha+1}{2\beta} ,\end{cases} \\
M_{n}(x) &\defeq&  \max_{\bm{\lambda} \in \Theta} \Pr \left(\sum_{i=1}^{n}   \lambda_{i}  X_{i} < x\right)
= \begin{cases} \Pr \Big(X_{1} <  x \Big),~& x < \frac{\alpha}{\beta}  \cr
	\Pr \Big(\frac{1}{n}\sum_{i=1}^{n} X_{i} < x \Big),~& x > \frac{2\alpha+1}{2\beta}
	\end{cases} .  
	\end{eqnarray*}
\label{extremal_prob_thm}
\end{theorem}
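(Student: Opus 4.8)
The plan is to reduce the optimization over the simplex $\Theta$ to a comparison of just two candidate weight vectors, and then to invoke Theorem~\ref{monotonicity_gamma_theorem} to order them. First I would record the two relevant closed forms, obtained from the scaling and additivity of gamma laws. A vertex $\bm{\lambda}=(1,0,\dots,0)^T$ gives $\sum_i \lambda_i X_i = X_1\sim Gamma(\alpha,\beta)$, while the barycenter $\bm{\lambda}=(1/n,\dots,1/n)^T$ gives $\frac1n\sum_i X_i\sim Gamma(n\alpha,n\beta)$. Both have mean $\alpha/\beta$, and multiplying by $\beta/\alpha$ places them in the unit-mean family $Gamma(k\alpha,k\alpha)$ with $k=1$ and $k=n$, which is precisely the family compared in Theorem~\ref{monotonicity_gamma_theorem}.

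Next I would use Theorem~\ref{monotonicity_gamma_theorem} to decide which candidate is the minimizer and which the maximizer. In unit-mean coordinates, part~(i) says that $\Pr(Gamma(\alpha_2,\alpha_2)<x)-\Pr(Gamma(\alpha_1,\alpha_1)<x)$ changes sign exactly once, at a crossover $x(\alpha_1,\alpha_2)\ge 1$. Thus for $x<\alpha/\beta$ (below every crossover) a larger shape gives a smaller CDF, so $Gamma(n\alpha,n\beta)$ lies below $Gamma(\alpha,\beta)$, identifying the first branch of each formula; for $x$ large enough to clear the crossovers the order reverses, giving the second branch. The precise large-$x$ threshold is governed not by this two-way comparison but by the pairwise step below: by part~(ii) the binding crossover is that of the closest pair $\alpha_1=\alpha,\ \alpha_2=2\alpha$, bounded by $1+\tfrac{1}{2\sqrt{\alpha\cdot\alpha}}=1+\tfrac{1}{2\alpha}$, which rescales to exactly $\tfrac{2\alpha+1}{2\beta}$.

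The substantive step is to show that the extremum over all of $\Theta$ is attained at one of these two points and not in the interior. I would prove that $g(\bm{\lambda})\defeq\Pr(\sum_i\lambda_i X_i<x)$ is Schur-convex on $\Theta$ when $x<\alpha/\beta$ and Schur-concave when $x>\tfrac{2\alpha+1}{2\beta}$. Since the $X_i$ are i.i.d., $g$ is symmetric, so by the Schur--Ostrowski criterion it suffices to determine the sign of $(\lambda_1-\lambda_2)\big(\partial_{\lambda_1}g-\partial_{\lambda_2}g\big)$. The barycenter is majorized by every $\bm{\lambda}\in\Theta$ and every $\bm{\lambda}$ is majorized by a vertex, so Schur-convexity forces the minimum at the barycenter and the maximum at a vertex, and Schur-concavity forces the reverse; combined with the ordering of the previous paragraph this yields both formulas.

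The main obstacle is establishing this sign. I would condition on $R\defeq\sum_{k\ge 3}\lambda_k X_k$ and reduce to the two-variable comparison $\Pr(\lambda_1X_1+\lambda_2X_2<x-R)$, whose endpoint law is $Gamma(\alpha,\cdot)$ and whose equalized (midpoint) law is $Gamma(2\alpha,\cdot)$, so that Theorem~\ref{monotonicity_gamma_theorem} with shapes $\alpha$ and $2\alpha$ controls it — this is where the binding crossover $1+\tfrac{1}{2\alpha}$ enters. The difficulty is that conditioning replaces the fixed level $x$ by the random level $x-R$, which ranges over all of $\mathbb{R}$: values of $x-R$ on opposite sides of the conditional crossover push $\partial_{\lambda_1}g-\partial_{\lambda_2}g$ in opposite directions, so one must integrate these contributions and show that the net sign is determined. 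This is exactly where the quantitative crossover bound of Theorem~\ref{monotonicity_gamma_theorem}(ii) is indispensable, and it is also why the two branches are separated by the gap $\big(\alpha/\beta,\,\tfrac{2\alpha+1}{2\beta}\big)$, on which this argument cannot resolve the sign, and hence cannot identify the extremizer.
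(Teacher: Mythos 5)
Your first two paragraphs are correct, and they coincide with what is in fact the \emph{final} step of the paper's own proof: once the extrema are known to lie among the equal-weight points $\frac{1}{d}\sum_{i=1}^{d}X_{i}$, $d=1,\dots,n$, Theorem~\ref{monotonicity_gamma_theorem} applied to the shape family $\{d\alpha\}$ orders these candidates, and the binding crossover bound $1+\frac{1}{2\alpha}$ (the pair $d=1,2$) rescales to $\frac{2\alpha+1}{2\beta}$. The genuine gap is in your third paragraph: the function $g(\bm{\lambda})=\Pr\bigl(\sum_{i}\lambda_{i}X_{i}<x\bigr)$ is \emph{not} Schur-convex on $\Theta$ when $x<\alpha/\beta$, so the Schur--Ostrowski program cannot be completed. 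To see this, work in the unit-mean scale $\alpha=\beta$; the paper's Laplace-transform identity~\eqref{lap_inverted} gives, for $\lambda_{1}\lambda_{2}\neq 0$,
\begin{equation*}
(\lambda_{1}-\lambda_{2})\bigl(\partial_{\lambda_{1}}g-\partial_{\lambda_{2}}g\bigr)
=\frac{(\lambda_{1}-\lambda_{2})^{2}}{\alpha}\,f_{W}'(x),
\qquad
W=\sum_{i=1}^{n}\lambda_{i}X_{i}+\lambda_{1}\psi_{1}+\lambda_{2}\psi_{2},
\end{equation*}
with $\psi_{1},\psi_{2}\sim Gamma(1,\alpha)$ independent of the $X_{i}$'s. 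Thus the Schur--Ostrowski sign at $\bm{\lambda}$ is the sign of $f_{W}'(x)$: positive if $x$ lies below the mode of $W$, negative if above. Now take $n=3$, $\alpha=\beta=10$, $x=0.95<1=\alpha/\beta$, and $\bm{\lambda}=(2\epsilon,\epsilon,1-3\epsilon)$. As $\epsilon\to 0$, $W$ converges to $X_{3}$, whose mode is $1-1/\alpha=0.9<x$, so by a routine dominated-convergence argument $f_{W}'(x)<0$ for small $\epsilon$ and the sign is strictly negative there. Near the barycenter, $W\approx Gamma(3\alpha+2,3\alpha)$ has mode $1+\frac{1}{3\alpha}>x$, so the sign is strictly positive. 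The sign therefore changes over the simplex, and the same happens for every $\alpha$ by choosing $x$ in the nonempty band between $\max(0,1-1/\alpha)$ (the mode of a single $X_{i}$) and the mean $1$. Consequently the difficulty you concede at the end --- contributions on opposite sides of the crossover pushing in opposite directions --- is not a technical obstacle that the quantitative bound of Theorem~\ref{monotonicity_gamma_theorem}(ii) can overcome: the pointwise sign statement you need is simply false.

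The theorem nevertheless survives, because a global minimum at the barycenter does not require monotonicity along the majorization order; this is precisely where the paper's route differs from yours. The paper uses the identity above only as a \emph{first-order condition}: at an extremum, for each pair of positive weights either $\lambda_{i}=\lambda_{j}$ or $x$ equals the mode of the corresponding augmented variable $W$; Lemma~\ref{lemma_1*} (strict monotonicity in $\lambda$ of the mode of $Y+\lambda\psi$) then forbids the positive weights from taking three or more distinct values; a further reduction following~\cite{szba}, with the two-value case controlled by Lemma~\ref{lemma_2*}, leaves only the equal-weight candidates; and Theorem~\ref{monotonicity_gamma_theorem} finishes as in your first two paragraphs. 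Note also that for the maximum branch $x>\frac{2\alpha+1}{2\beta}$, your Schur-concavity claim is equivalent, via the identity, to the uniform bound $\mathrm{mode}(W)\le\frac{2\alpha+1}{2\alpha}$ over all of $\Theta$ --- an $n$-variable strengthening of Lemma~\ref{lemma_2*}(ii) that your conditioning argument does not establish. To repair your write-up you would have to abandon Schur-convexity altogether and argue, as the paper does, through the structure of the critical points.
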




Next we state and prove the results that are directly relevant to this section. 
Let us define
\begin{equation*}
Q(n) \defeq \frac{1}{n} Q_{n}, 
\label{Q_n}
\end{equation*}
where $Q_{n} \sim \chi^{2}_{n}$ denotes a chi-squared r.v of degree $n$. Note that $Q(n) \sim Gamma(n/2,n/2)$.
In case of several i.i.d gamma r.v's of this sort, we refer to the $j^{th}$ r.v by $Q_{j}(n)$.

\begin{theorem}[\textbf{Necessary and sufficient condition for~\eqref{prob_ineq_lower}}]
Given an SPSD matrix $A$ of rank $r$ and tolerances $(\veps,\delta)$ as above, the following hold:
\begin{enumerate}[(i)]
	\item \textbf{Sufficient condition:} there exists some integer $n_{0} \geq 1$ such that
	\begin{equation}
			\Pr\big(Q(n_{0}) < (1-\veps) \big) \leq \delta .
		\label{max_bnd_N_lower}
	\end{equation}
Furthermore,~\eqref{prob_ineq_lower} holds for all $n \geq n_{0}$.
	\item \textbf{Necessary condition:} if ~\eqref{prob_ineq_lower} holds for some $n_{0} \geq 1$, 
	then for all $n \geq n_{0}$
	\begin{equation}
			P^{-}_{\veps,r}(n) \defeq \Pr\big(Q(n r) < (1-\veps)\big) \leq \delta.
		\label{min_bnd_N_lower}
	\end{equation} 
	\item \textbf{Tightness:} if the $r$ positive eigenvalues of $A$ are all equal
	 (NB this always happens if $r=1$), then
	there is a positive integer $n_{0}$ satisfying~\eqref{min_bnd_N_lower}, such that~\eqref{prob_ineq_lower} holds iff $n \geq n_0$.
\end{enumerate}
\label{main_trace_theorem_lower}
\end{theorem}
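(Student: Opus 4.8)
The plan is to diagonalize $A$, reduce $tr_n(A)/tr(A)$ to a convex combination of i.i.d.\ scaled chi-squared variables, and then read off all three assertions from the two deferred Theorems~\ref{monotonicity_gamma_theorem} and~\ref{extremal_prob_thm}. Write $A = U\Lambda U^T$ with $\Lambda = \mathrm{diag}(\mu_1,\dots,\mu_r,0,\dots,0)$, $\mu_k>0$. Since $\ww_j \sim \mathcal{N}(0,\mathbb{I})$ is rotationally invariant, $U^T\ww_j$ is again standard normal, so $\ww_j^T A\,\ww_j = \sum_{k=1}^r \mu_k z_{jk}^2$ with the $z_{jk}$ i.i.d.\ standard normal. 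Averaging over $j$, for each fixed $k$ we get $\frac1n\sum_j z_{jk}^2 \sim Q(n) \sim Gamma(n/2,n/2)$, independent across $k$. Setting $\lambda_k = \mu_k/tr(A)$, so that $\bm{\lambda}\in\Theta$ with $r$ (not $n$) components, gives the exact identity $tr_n(A)/tr(A) = \sum_{k=1}^r \lambda_k Q_k(n)$. Hence \eqref{prob_ineq_lower} is equivalent to $\Pr\!\big(\sum_{k=1}^r \lambda_k Q_k(n) < 1-\veps\big) \le \delta$. Because $Q_k(n)$ has $\alpha/\beta = 1$ while $1-\veps < 1$, I am squarely in the regime $x<\alpha/\beta$ of Theorem~\ref{extremal_prob_thm}.

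Next I apply Theorem~\ref{extremal_prob_thm} with its index taken to be $r$ and $x=1-\veps$, together with the pooling identity $\frac1r\sum_{k=1}^r Q_k(n) = Q(nr)$ (a sum of $nr$ independent squared normals), to sandwich the probability as
\[
\Pr\big(Q(nr) < 1-\veps\big) \;\le\; \Pr\Big(\textstyle\sum_{k=1}^r \lambda_k Q_k(n) < 1-\veps\Big) \;\le\; \Pr\big(Q(n) < 1-\veps\big).
\]
The upper (maximizing) bound yields the sufficient condition (i): if $\Pr(Q(n)<1-\veps)\le\delta$ then the target inequality holds for every rank-$r$ SPSD $A$. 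The lower (minimizing) bound yields the starting point of the necessary condition (ii): if \eqref{prob_ineq_lower} holds at $n_0$, then $P^-_{\veps,r}(n_0)=\Pr(Q(n_0 r)<1-\veps)=m_r(1-\veps)\le\delta$.

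To promote both statements from a single $n_0$ to all $n\ge n_0$ I invoke Theorem~\ref{monotonicity_gamma_theorem}. For $Q(n)\sim Gamma(n/2,n/2)$ the shape grows with $n$, and since the crossover point there obeys $x(\alpha_1,\alpha_2)\ge 1 > 1-\veps$, the evaluation point lies in the region where $\Delta(\cdot)<0$; thus $n\mapsto \Pr(Q(n)<1-\veps)$ is strictly decreasing, and likewise $n\mapsto P^-_{\veps,r}(n)=\Pr(Q(nr)<1-\veps)$. This gives persistence of the sufficient bound and completes (ii). For tightness (iii), when the $r$ positive eigenvalues coincide one has $\lambda_k\equiv 1/r$, so the sandwich collapses to the exact relation $tr_n(A)/tr(A)=Q(nr)$ and \eqref{prob_ineq_lower} holds \emph{iff} $P^-_{\veps,r}(n)\le\delta$; since $Q(nr)\to 1$ in probability, this quantity eventually falls below $\delta$, and by the monotonicity just established the admissible set is exactly a tail $\{n\ge n_0\}$.

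The main obstacle I anticipate is careful bookkeeping of the two distinct roles of the integer index: the number of summands $r$ (the rank, which fixes the dimension of $\Theta$) versus the sample size $n$ (which sets the gamma shape $n/2$). The argument hinges on checking that $1-\veps$ lies strictly below both the threshold $\alpha/\beta=1$ in Theorem~\ref{extremal_prob_thm} and the crossover point $x(\alpha_1,\alpha_2)\ge 1$ in Theorem~\ref{monotonicity_gamma_theorem}; it is precisely this one-sided placement that makes the lower-tail comparisons and the monotonicity run in the correct direction, and it is where a sign or inequality slip would be easiest to make.
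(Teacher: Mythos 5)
Your proposal is correct and follows essentially the same route as the paper's own proof: the unitary diagonalization reducing $tr_n(A)/tr(A)$ to the convex combination $\sum_{k=1}^r \frac{\lambda_k}{tr(A)} Q_k(n)$, the sandwich from Theorem~\ref{extremal_prob_thm} (with the theorem's index playing the role of the rank $r$ and $x=1-\veps<\alpha/\beta=1$), the monotonicity of $P^-_{\veps,r}(n)$ in $n$ via Theorem~\ref{monotonicity_gamma_theorem} (using $x(\alpha_1,\alpha_2)\ge 1 > 1-\veps$), and the collapse of the sandwich to the exact identity $tr_n(A)/tr(A)=Q(nr)$ in the equal-eigenvalue case for tightness. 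Your explicit bookkeeping of the pooling identity $\frac1r\sum_k Q_k(n)=Q(nr)$ and the regime check is exactly what the paper leaves implicit.
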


\begin{proof}
Since $A$ is SPSD, it can be diagonalized by a unitary similarity transformation as $A = U^{T} \Lambda U$,
where $\Lambda$ is the diagonal matrix of eigenvalues sorted in non-increasing order.
Consider $n$ random vectors 
$\ww_{i}, \; i = 1, \ldots , n$, whose components are i.i.d and drawn from the 
standard normal distribution, and define $\zz_{i} = U \ww_{i}$ for each $i$. 
Note that since $U$ is unitary, the entries of $\zz_{i}$ are i.i.d standard normal variables, 
like the entries of $\ww_{i}$. We have
\begin{eqnarray*}
\frac{tr_{n}(A)}{tr(A)}  &=& \frac{1}{n~tr(A)} \sum_{i=1}^{n} \ww_{i}^{T} A \ww_{i}  = 
\frac{1}{n~tr(A)} \sum_{i=1}^{n} \zz_{i}^{T} \Lambda \zz_{i} = 
\frac{1}{n~tr(A)} \sum_{i=1}^{n} \sum_{j=1}^{r} \lambda_{j} z_{ij}^{2} \nonumber \\
&=& \sum_{j=1}^{r} \frac{\lambda_{j}}{n~tr(A)} \sum_{i=1}^{n} z_{ij}^{2} 
= \sum_{j=1}^{r} \frac{\lambda_{j}}{tr(A)} Q_{j}(n) ,
\end{eqnarray*}
where the $\lambda_{j}$'s appearing in the sums are positive eigenvalues of A. Now, noting that 
$\sum_{j=1}^{r} \frac{\lambda_{j}}{tr(A)} = 1$,
Theorem~\ref{extremal_prob_thm} yields

\begin{subequations}
\begin{eqnarray}
\Pr\left(\sum_{j=1}^{r} \frac{\lambda_{j}}{tr(A)} Q_{j}(n) \leq (1-\veps)\right) &\leq&  \Pr\big( Q(n) \leq (1-\veps)\big) = P^{-}_{\veps,1}(n) , \label{max_bnd_N_lower_proof}\\
\Pr\left(\sum_{j=1}^{r} \frac{\lambda_{j}}{tr(A)} Q_{j}(n) \leq (1-\veps)\right) &\geq& \Pr\big( Q(n r) \leq (1-\veps) \big) = P^{-}_{\veps,r}(n). \label{min_bnd_N_lower_proof}
\end{eqnarray}
\label{min_max_bnd_N_lower_proof}
\end{subequations}
In addition, for any given $r > 0$ and $\veps > 0$, the function $P^{-}_{\veps,r}(n)$ 
is monotonically decreasing on integers $n \geq 1$. This can be seen by 
Theorem~\ref{monotonicity_gamma_theorem} using the sequence $\alpha_{i} = (n_{0}+(i-1))r/2, \; i \geq 1$. The claims now easily follow by combining~\eqref{min_max_bnd_N_lower_proof} 
and this decreasing property. 
$\blacksquare$
\end{proof}

\begin{theorem}[\textbf{Necessary and sufficient condition for~\eqref{prob_ineq_upper}}]
Given an SPSD matrix $A$ of rank $r$ and tolerances $(\veps,\delta)$ as above, the following hold:
\begin{enumerate}[(i)]
		\item \textbf{Sufficient condition:} if the inequality 
	\begin{equation}
			\Pr\big(Q(n_{0}) \leq (1+\veps) \big) \geq 1-\delta
		\label{max_bnd_N_upper}
	\end{equation}
	is satisfied for some $n_{0} > \veps^{-1}$, then~\eqref{prob_ineq_upper} holds with $n = n_{0}$. 
	Furthermore, there is always an $n_{0} > \veps^{-2}$ such that \eqref{max_bnd_N_upper}
	is satisfied and, for such $n_{0}$, it follows that~\eqref{prob_ineq_upper} holds for all $n \geq n_{0}$.
	\item \textbf{Necessary condition:} if~\eqref{prob_ineq_upper} 
	holds for some $n_{0} > \veps^{-1}$, then
	\begin{equation}
			P^{+}_{\veps,r}(n) \defeq \Pr\big(Q(n r) \leq (1+\veps)\big) \geq 1- \delta,
		\label{min_bnd_N_upper}
	\end{equation}
	with $n = n_{0}$. Furthermore, if $n_{0} > \veps^{-2} r^{-2}$, 
	then ~\eqref{min_bnd_N_upper} holds for all $n \geq n_{0}$.
	\item \textbf{Tightness:} if the $r$ positive eigenvalues of $A$ are all equal, then
	there is a smallest $n_{0} > \veps^{-2} r^{-2}$ satisfying~\eqref{min_bnd_N_upper}  
	such that for any $n \geq n_{0}$,~\eqref{prob_ineq_upper} holds, 
	and for any $\veps^{2} r^{-2} < n < n_{0}$,~\eqref{prob_ineq_upper} does not hold. 
	If $\delta$ is small enough so that~\eqref{min_bnd_N_upper} does not hold for any 
	$n \leq \veps^{2} r^{-2}$, then $n_{0}$ is both necessary and sufficient for~\eqref{prob_ineq_upper}. 
\end{enumerate}
\label{main_trace_theorem_upper}
\end{theorem}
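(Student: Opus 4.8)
The plan is to follow the template of the proof of Theorem~\ref{main_trace_theorem_lower} almost verbatim, reusing the representation $tr_n(A)/tr(A) = \sum_{j=1}^r \frac{\lambda_j}{tr(A)} Q_j(n)$ established there, where the weights $\lambda_j/tr(A)$ are nonnegative and sum to one and each $Q_j(n) \sim Gamma(n/2,n/2)$ is i.i.d. The only structural difference is the location of the threshold: here the target value is $x = 1+\veps > 1$, whereas in the lower theorem it was $1-\veps < 1$. Since for $Gamma(n/2,n/2)$ one has $\alpha=\beta=n/2$ and hence $\frac{2\alpha+1}{2\beta} = 1 + 1/n$, the condition $1+\veps > \frac{2\alpha+1}{2\beta}$ is equivalent to $n > \veps^{-1}$. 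This is precisely the hypothesis $n_0 > \veps^{-1}$ appearing in parts (i) and (ii), and it places us in the branch of Theorem~\ref{extremal_prob_thm} valid for $x > \frac{2\alpha+1}{2\beta}$.

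First I would establish the sandwich. Applying Theorem~\ref{extremal_prob_thm} over the $r$-dimensional simplex with $x = 1+\veps$ and $n > \veps^{-1}$, the minimizing weight is a single coordinate and the maximizing weight is the uniform one, giving $\Pr(Q(n) \le 1+\veps) \le \Pr(\sum_{j=1}^r \frac{\lambda_j}{tr(A)} Q_j(n) \le 1+\veps) \le \Pr(Q(nr) \le 1+\veps)$, where I used that $\frac1r \sum_{j=1}^r Q_j(n) \sim Q(nr)$. The right inequality immediately yields the necessary condition (ii): if~\eqref{prob_ineq_upper} holds at some $n_0 > \veps^{-1}$, then $P^+_{\veps,r}(n_0) = \Pr(Q(n_0 r) \le 1+\veps) \ge 1-\delta$. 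The left inequality yields the first assertion of the sufficient condition (i): if~\eqref{max_bnd_N_upper} holds at $n_0 > \veps^{-1}$, then $\Pr(\sum \cdots \le 1+\veps) \ge \Pr(Q(n_0) \le 1+\veps) \ge 1-\delta$, i.e.~\eqref{prob_ineq_upper} holds at $n = n_0$.

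Next I would propagate these single-$n$ statements to all $n \ge n_0$ by proving monotonicity of the relevant chi-square tail in $n$, which is where Theorem~\ref{monotonicity_gamma_theorem} enters. For $P^+_{\veps,r}(n) = \Pr(Q(nr) \le 1+\veps)$ I would apply that theorem to the consecutive shape sequence $\alpha_i = (n_0 + i - 1)r/2$, for which $\alpha_i(\alpha_{i+1}-\alpha_i) = (n_0+i-1)r^2/4$, so the crossing-point bound gives $x(\alpha_i,\alpha_{i+1}) \le 1 + \frac{1}{r\sqrt{n_0+i-1}}$. The value $1+\veps$ exceeds every such crossing point precisely when $r^2(n_0+i-1) > \veps^{-2}$ for all $i \ge 1$, whose binding case $i=1$ is exactly $n_0 > \veps^{-2}r^{-2}$. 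Under this condition $\Delta(1+\veps) > 0$ for every consecutive pair, so $P^+_{\veps,r}(n)$ is strictly increasing for $n \ge n_0$; this gives the ``furthermore'' clauses of (ii). The same argument with $r=1$ (threshold $n_0 > \veps^{-2}$) shows that $\Pr(Q(n) \le 1+\veps)$ is increasing for $n > \veps^{-2}$, and since $Q(n) \to 1$ in probability this tail tends to $1$; hence some $n_0 > \veps^{-2}$ satisfies~\eqref{max_bnd_N_upper}, and monotonicity together with the left sandwich inequality extends~\eqref{prob_ineq_upper} to all $n \ge n_0$, completing (i).

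Finally, for tightness (iii) I would note that when the $r$ positive eigenvalues are all equal each weight equals $1/r$, so the middle term of the sandwich collapses exactly onto $\frac1r \sum_{j=1}^r Q_j(n) = Q(nr)$; hence~\eqref{prob_ineq_upper} is equivalent to $P^+_{\veps,r}(n) \ge 1-\delta$, with no slack. Combining the strict monotonicity of $P^+_{\veps,r}$ on $n > \veps^{-2}r^{-2}$ with its limit $1$ produces a smallest threshold $n_0 > \veps^{-2}r^{-2}$ above which~\eqref{prob_ineq_upper} holds and below which (down to $\veps^{-2}r^{-2}$) it fails; if $\delta$ is small enough that $P^+_{\veps,r}(n) < 1-\delta$ throughout the uncontrolled range $n \le \veps^{-2}r^{-2}$, then this $n_0$ is globally necessary and sufficient. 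I expect the main obstacle to be exactly this monotonicity step: one must verify through the crossing-point estimate of Theorem~\ref{monotonicity_gamma_theorem} that $1+\veps$ sits above all the relevant CDF crossing points, and the fact that this fails for small $n$ is precisely why the tightness claim requires both the restriction to $n > \veps^{-2}r^{-2}$ and the smallness of $\delta$.
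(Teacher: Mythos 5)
Your proposal is correct and follows essentially the same route as the paper's own proof: the unitary diagonalization giving the convex combination $\sum_{j=1}^{r}\frac{\lambda_{j}}{tr(A)}Q_{j}(n)$, the sandwich from Theorem~\ref{extremal_prob_thm} valid for $n>\veps^{-1}$ (since $\frac{2\alpha+1}{2\beta}=1+1/n$ when $\alpha=\beta=n/2$), and the monotonicity of $P^{+}_{\veps,r}(n)$ on $n>\veps^{-2}r^{-2}$ via Theorem~\ref{monotonicity_gamma_theorem} with the shape sequence $\alpha_{i}=(n_{0}+(i-1))r/2$. The only difference is that you spell out the crossing-point computation and the limiting argument ($Q(nr)\to 1$ in probability) that the paper compresses into ``the claims now easily follow.''
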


\begin{proof}
The same unitary diagonalization argument as in the proof of Theorem~\ref{main_trace_theorem_lower} shows that 
\begin{equation*}
\Pr\Big( tr_{n}(A) < (1+\veps) tr(A) \Big) = \Pr\left(\sum_{j=1}^{r} \frac{\lambda_{j}}{tr(A)} 
Q_{j}(n) < (1+\veps)\right). 
\end{equation*}
Now we see that if $n > \veps^{-1}$, Theorem~\ref{extremal_prob_thm} with $\alpha = n/2$ yields 
\begin{subequations}
\begin{eqnarray}
\Pr\left(\sum_{j=1}^{r} \frac{\lambda_{j}}{tr(A)} Q_{j}(n) \leq (1+\veps)\right) &\geq&  
\Pr\big( Q(n) \leq (1+\veps)\big) = P^{+}_{\veps,1}(n), \label{max_bnd_N_upper_proof}\\
\Pr\left(\sum_{j=1}^{r} \frac{\lambda_{j}}{tr(A)} Q_{j}(n) \leq (1+\veps)\right) &\leq& 
\Pr\big( Q(n r) \leq (1+\veps) \big) = P^{+}_{\veps,r}(n). \label{min_bnd_N_upper_proof}
\end{eqnarray}
\label{max_min_bnd_N_upper_proof}
\end{subequations}
In addition, for any given $r > 0$ and $\veps > 0$, the function $P^{+}_{\veps,r}(n)$
is monotonically increasing on integers $n > \veps^{-2} r^{-2}$. 
This can be seen by Theorem~\ref{monotonicity_gamma_theorem} using the sequence 
$\alpha_{i} = (n_{0}+(i-1))r/2, \; i \geq 1$. 
The claims now easily follow by combining~\eqref{max_min_bnd_N_upper_proof} 
and this increasing property.
$\blacksquare$
\end{proof}

\begin{figure}[htb]
\centering
\subfigure[]{
\includegraphics[scale=0.5]{./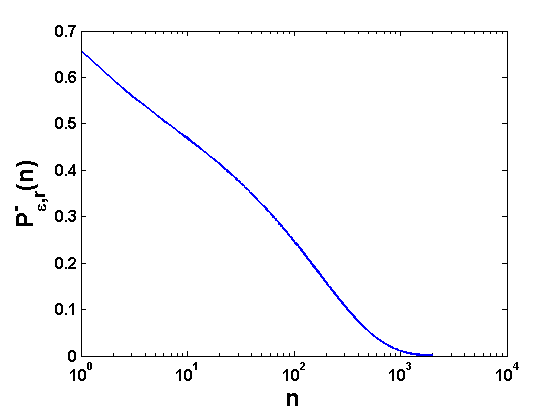}}
\subfigure[]{
\includegraphics[scale=0.5]{./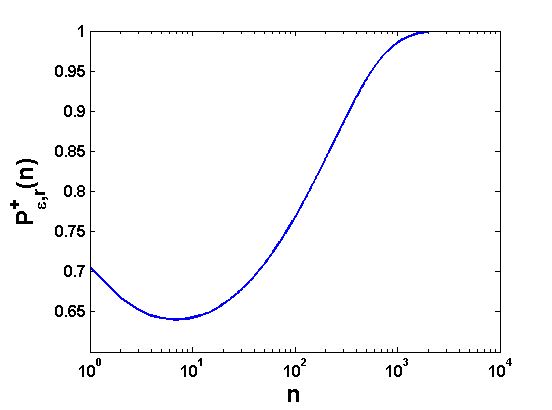}}
\caption{The curves of $P^{-}_{\veps,r}(n)$ and $P^{+}_{\veps,r}(n)$, 
defined in~\eqref{min_bnd_N_lower} and~\eqref{min_bnd_N_upper},
for $\veps = 0.1$ and $r=1$: 
(a) $P^{-}_{\veps,r}(n)$ decreases monotonically
for all $n \geq 1$; (b) $P^{+}_{\veps,r}(n)$ increases monotonically
only for $n \geq n_0$, where $n_0 > 1$:
according to  Theorem~\ref{main_trace_theorem_upper}, $n_0 = 100$ is safe,
and this value does not disagree with the plot.}
\label{fig:P_vs_n}
\end{figure}

\noindent \textbf{Remarks:} 
\begin{enumerate}[(i)]
	\item Part (iii) of Theorem~\ref{main_trace_theorem_upper} states that if $\delta$ is not small enough, 
then $n_{0}$ might not be a necessary and sufficient sample size for the special matrices mentioned there, 
i.e., matrices with $\lambda_{1} = \lambda_{2} = \cdots = \lambda_{r}$. 
This can be seen from Figure~\ref{fig:P_vs_n}(b): for $r=1, \veps = 0.1$, if $\delta = 0.33$, say,  
there is an integer $10 < n \leq 100$ such that~\eqref{prob_ineq_upper} holds,
so $n=101$ is no longer a necessary sample size (although it is still sufficient). 
	\item Simulations show that the sufficient sample size obtained using Theorems~\ref{main_trace_theorem_lower} 
	and~\ref{main_trace_theorem_upper}, amounts to bounds of the form 
	$\mathcal{O}\left(c(\veps,\delta) g(\delta)\right)$, where $g(\delta) < 1$ 
	is a decreasing function of $\delta$ and $c(\veps,\delta)$ is as defined in~\eqref{loose_bnd}.  
	As such, for larger values of $\delta$, i.e., when larger uncertainty is allowed, 
	one can obtain significantly smaller sample sizes than the one predicted by~\eqref{loose_bnd}; 
	see Figures~\ref{fig:compare_N1} and~\ref{fig:compare_N2}. 
	In other words, the difference between the above tighter conditions 
	and~\eqref{loose_bnd} is increasingly more prominent as $\delta$ gets larger.
	\item Note that the results in Theorems~\ref{main_trace_theorem_lower} and~\ref{main_trace_theorem_upper} are independent of the size of the matrix. In fact, the first items (i) in both theorems
	do not require any a priori knowledge about the matrix, other than it being SPSD. 
	In order to compute the necessary sample sizes, though, one is required to also know the rank of the matrix.
	\item The conditions in our theorems, despite their potentially ominous look,
 are actually simple to compute. Appendix~\ref{matlab} contains a short {\sc Matlab} code which calculates these 
necessary or sufficient  sample sizes to satisfy the probabilistic accuracy guarantees~\eqref{prob_ineq_lower_upper}, 
given a pair $(\veps,\delta)$ (and the matrix rank $r$ in case of necessary sample sizes).
%
%
%
%
%
%
This code was used for generating Figures~\ref{fig:compare_N1} and~\ref{fig:compare_N2}.
\end{enumerate}
\begin{figure}[htb]
\centering
\subfigure[]{
\includegraphics[scale=0.5]{./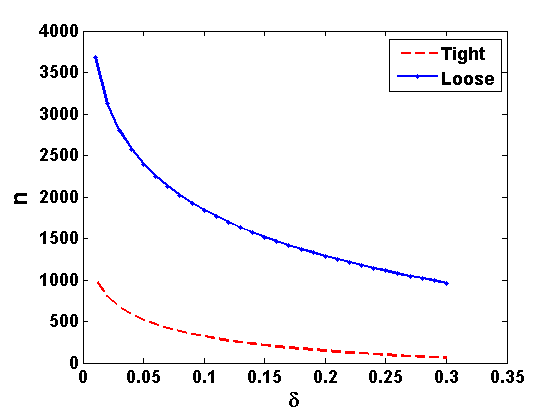}}
\subfigure[]{
\includegraphics[scale=0.5]{./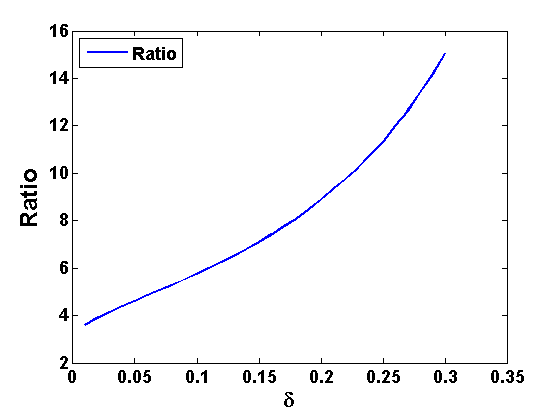}}
\caption{Comparing, as a function of $\delta$,
the sample size obtained from~\eqref{max_bnd_N_lower} and denoted by ``tight'', with
that of~\eqref{loose_bnd} and denoted by ``loose'', 
for $\veps = 0.1$ and $0.01 \leq \delta \leq 0.3$: 
(a) sufficient sample size, $n$, for~\eqref{prob_ineq_lower}, 
(b) ratio of sufficient sample size obtained from~\eqref{loose_bnd} over that of~\eqref{max_bnd_N_lower}.
When $\delta$ is relaxed, our new bound is tighter than the older one by an order of magnitude.}
\label{fig:compare_N1}
\end{figure}

Combining Theorems~\ref{main_trace_theorem_lower} and~\ref{main_trace_theorem_upper}, 
we can easily state conditions on the sample size $n$ for which the condition
\begin{equation}
\Pr \big( |tr_{n}(A) -tr(A)| \leq \veps~ tr(A) \big) \geq 1-\delta 
\label{prob_ineq}
\end{equation}
holds.
We have the following immediate corollary:

\begin{corollary}[\textbf{Necessary and sufficient condition for~\eqref{prob_ineq}}]
Given an SPSD matrix $A$ of rank $r$ and tolerances $(\veps,\delta)$ as above, the following hold:
\begin{enumerate}[(i)]
		\item \textbf{Sufficient condition:} if the inequality 
	\begin{equation}
			\Pr\big((1-\veps) \leq Q(n_{0}) \leq (1+\veps) \big) \geq 1-\delta
		\label{max_bnd_N}
	\end{equation}
	is satisfied for some $n_{0} > \veps^{-1}$, then~\eqref{prob_ineq} holds with $n = n_{0}$. 
	Furthermore, there is always an $n_{0} > \veps^{-2}$ such that \eqref{max_bnd_N}
	is satisfied and, for such $n_{0}$, it follows that~\eqref{prob_ineq} holds for all $n \geq n_{0}$.
	\item \textbf{Necessary condition:} if~\eqref{prob_ineq} 
	holds for some $n_{0} > \veps^{-1}$, then
	\begin{equation}
			\Pr\big((1-\veps) \leq Q(n r) \leq (1+\veps)\big) \geq 1- \delta,
		\label{min_bnd_N}
	\end{equation}
	with $n = n_{0}$. Furthermore, if $n_{0} > \veps^{-2}r^{-2}$, 
	then~\eqref{min_bnd_N} holds for all $n \geq n_{0}$.
	\item \textbf{Tightness:} if the $r$ positive eigenvalues of $A$ are all equal then
	there is a smallest $n_{0} > \veps^{-2} r^{-2}$ satisfying~\eqref{min_bnd_N}  
	such that for any $n \geq n_{0}$,~\eqref{prob_ineq} holds, 
	and for any $\veps^{-2} r^{-2} < n < n_{0}$,~\eqref{prob_ineq} does not hold. 
	If $\delta$ is small enough so that~\eqref{min_bnd_N} does not hold for any 
	$n \leq \veps^{-2} r^{-2}$, then $n_{0}$ is both necessary and sufficient for~\eqref{prob_ineq}. 
\end{enumerate}
\label{main_trace_corollary}
\end{corollary}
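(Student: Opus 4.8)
The plan is to reduce the two-sided guarantee~\eqref{prob_ineq} to the same convex-combination-of-chi-squares form used in the two preceding theorems, and then to apply Theorem~\ref{extremal_prob_thm} at both endpoints at once. First I would note that the event in~\eqref{prob_ineq} is exactly $(1-\veps)\,tr(A) \le tr_n(A) \le (1+\veps)\,tr(A)$, and that the unitary diagonalization $A=U^T\Lambda U$ used before gives $tr_n(A)/tr(A)=\sum_{j=1}^r \frac{\lambda_j}{tr(A)}Q_j(n)$, with the coefficients $\lambda_j/tr(A)$ nonnegative and summing to $1$, i.e.\ lying in the simplex $\Theta$ of Theorem~\ref{extremal_prob_thm}. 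Writing $S$ for this convex combination, the target probability is the CDF difference $\Pr(S\le 1+\veps)-\Pr(S< 1-\veps)$. Since each $Q_j(n)\sim Gamma(n/2,n/2)$, the two extremal thresholds of Theorem~\ref{extremal_prob_thm} specialize to $\alpha/\beta=1$ and $(2\alpha+1)/(2\beta)=1+1/n$.

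For the sufficient condition I would bound the CDF difference from below by minimizing the first term and maximizing the second over $\Theta$ separately, giving $\Pr(S\le 1+\veps)-\Pr(S<1-\veps)\ge \min_{\bm{\lambda}}\Pr(S\le 1+\veps)-\max_{\bm{\lambda}}\Pr(S<1-\veps)$. The lower endpoint satisfies $1-\veps<1=\alpha/\beta$, so by the maximum branch of Theorem~\ref{extremal_prob_thm} the maximizer collapses to the single variable $Q(n)$; and provided $n_0>\veps^{-1}$ the upper endpoint satisfies $1+\veps>1+1/n_0$, so by the minimum branch the minimizer likewise collapses to $Q(n)$. Both extremizers therefore reduce to the \emph{same} variable $Q(n_0)$, and the lower bound becomes precisely $\Pr\big((1-\veps)\le Q(n_0)\le(1+\veps)\big)$. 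Hence~\eqref{max_bnd_N} implies~\eqref{prob_ineq}, and the guaranteed existence of $n_0>\veps^{-2}$ together with the monotonicity argument below gives the ``for all $n\ge n_0$'' part.

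For the necessary condition I would run the same two estimates in the opposite direction: $\Pr(S\le 1+\veps)\le\max_{\bm{\lambda}}\Pr(S\le 1+\veps)$ and $\Pr(S<1-\veps)\ge\min_{\bm{\lambda}}\Pr(S<1-\veps)$, which for $n_0>\veps^{-1}$ both collapse to the equal-weight average $Q(n_0 r)$ (the average of $r$ independent $Gamma(n_0/2,n_0/2)$ variables is exactly $Q(n_0 r)$). This yields $\Pr(S\le 1+\veps)-\Pr(S<1-\veps)\le P^+_{\veps,r}(n_0)-P^-_{\veps,r}(n_0)$, so~\eqref{prob_ineq} forces~\eqref{min_bnd_N}. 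To push this to all $n\ge n_0$ I would combine the two monotonicity facts already proved: $P^-_{\veps,r}$ is decreasing for every $n\ge1$ (Theorem~\ref{main_trace_theorem_lower}) and $P^+_{\veps,r}$ is increasing for $n>\veps^{-2}r^{-2}$ (Theorem~\ref{main_trace_theorem_upper}), so their difference $P^+_{\veps,r}-P^-_{\veps,r}$ is increasing on that range. For tightness, equal positive eigenvalues force $\lambda_j/tr(A)\equiv1/r$, so $S=Q(nr)$ \emph{identically}; every inequality above becomes an equality, pinning the two-sided probability to $P^+_{\veps,r}(n)-P^-_{\veps,r}(n)$, and the smallest $n_0>\veps^{-2}r^{-2}$ above which this crosses $1-\delta$ is then both necessary and sufficient whenever $\delta$ is small enough to exclude crossings below the threshold.

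I expect the main obstacle to be resisting the temptation to treat the two one-sided statements independently and union-bound them, which would only deliver confidence $1-2\delta$ rather than $1-\delta$. The correct move is to keep the single CDF-difference and apply Theorem~\ref{extremal_prob_thm} in opposite senses at the two endpoints \emph{simultaneously}; the one genuine thing to verify is that a single choice of $n_0$ places $1-\veps$ and $1+\veps$ on the correct sides of the thresholds $1$ and $1+1/n_0$, which is exactly why the hypothesis $n_0>\veps^{-1}$ (and, for propagation, $n_0>\veps^{-2}r^{-2}$) is needed so that both extremizers collapse to one common variable.
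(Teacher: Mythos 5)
Your proposal is correct and is essentially the paper's own (implicit) argument: the paper states the corollary as immediate from Theorems~\ref{main_trace_theorem_lower} and~\ref{main_trace_theorem_upper}, meaning exactly what you carry out — the same diagonalization to $S=\sum_j \frac{\lambda_j}{tr(A)}Q_j(n)$, then Theorem~\ref{extremal_prob_thm} applied at both endpoints simultaneously (single-variable collapse for sufficiency, equal-weight collapse for necessity), followed by the same monotonicity facts for $P^{-}_{\veps,r}$ and $P^{+}_{\veps,r}$. Your closing observation is also the right one: the combination must happen at the level of the extremal CDF bounds, since union-bounding the two one-sided probability statements would only yield $1-2\delta$.
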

\noindent \textbf{Remark:} 
The necessary condition in Corollary~\ref{main_trace_corollary}(ii) is only valid for $n > \veps^{-1}$ 
(this is a consequence of the condition~\eqref{min_bnd_N} being tight, as shown in part (iii)). 
In~\cite{roas1}, 
an ``almost tight'' necessary condition is given that works for all $n\geq 1$.

\begin{figure}[htb]
\centering
\subfigure[]{
\includegraphics[scale=0.5]{./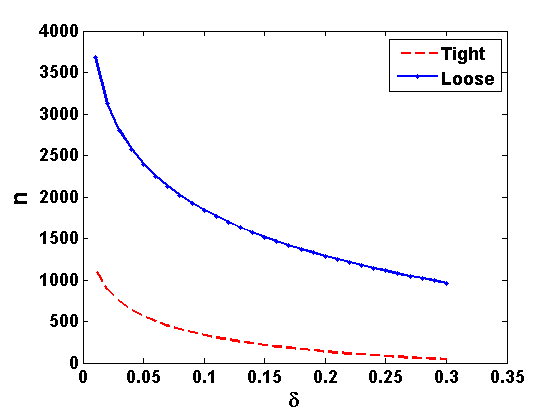}}
\subfigure[]{
\includegraphics[scale=0.5]{./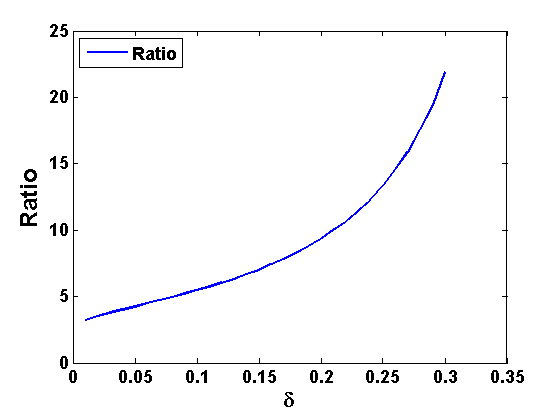}}
\caption{Comparing, as a function of $\delta$,
the sample size obtained from~\eqref{max_bnd_N_upper} and denoted by ``tight'', 
with that of~\eqref{loose_bnd} and denoted by ``loose'', 
for $\veps = 0.1$ and $0.01 \leq \delta \leq 0.3$: 
(a) sufficient sample size, $n$, for~\eqref{prob_ineq_upper}, 
(b) ratio of sufficient sample size obtained from~\eqref{loose_bnd} over that of~\eqref{max_bnd_N_upper}. 
When $\delta$ is relaxed, our new bound is tighter than the older one by an order of magnitude.}
\label{fig:compare_N2}
\end{figure}


\section{Randomized algorithms for solving large scale NLS problems}
\label{sec:alg_intro}

Consider the problem of decreasing the value of the original objective~\eqref{misfit}
to a desired level (e.g., satisfying a given tolerance) to recover the sought model, $\mm$. 
With the sensitivity matrices
\begin{eqnarray*}
J_i(\mm) = \frac{\partial \ff(\mm,\qq_{i})}{\partial \mm}, \quad i = 1, \ldots , s \label{2.9}
\end{eqnarray*}
we have the gradient
\begin{eqnarray*}
\grad \phi (\mm) = 2 \sum_{i=1}^{s} J_{i}^{T}(\mm)(\ff(\mm,\qq_{i}) - \dd_{i}).
\end{eqnarray*}

An iterative method such as modified Gauss-Newton (GN), L-BFGS, or nonlinear conjugate gradient
is typically designed to decrease the value of
the objective function using repeated calculations of the gradient. 
In the present article we follow~\cite{rodoas1} and employ variants of stabilized GN throughout, 
thus achieving
a context in which to focus our attention on the new aspects of this work.
In the $k^{th}$ iteration of such a method, having the current iterate $\mm_k$,
an update direction, $\delta \mm_k$, is calculated.
Then the iterate is updated as $\mm_{k+1} \leftarrow \mm_{k} + \alpha_k \delta \mm_k$, 
for some appropriate step length $\alpha_k$. 

What is special in our context here is that the update direction, $\delta \mm_k$, 
is calculated using the approximate misfit, $\widehat{\phi}(\mm_{k},n_{k})$,
defined as described in~\eqref{approx_phi} ($n_{k}$ is the sample size used for this approximation in the $k^{th}$ iteration). 
Thus, we need to check or assess whether the value of the original objective is also decreased using this new iterate.
The challenge is to do this as well as check for termination of the iteration process with
a minimal number of evaluations of the prohibitively expensive original misfit function $\phi$. 

In this section, we extend the algorithms introduced in~\cite{rodoas1,rodoas2} 
in the context of the more general NLS formulation~\eqref{misfit_iida} or~\eqref{misfit_inid},
assuming that their corresponding noise distributions hold,
although, as promised in Section~\ref{sec:intro}, 
we stick to the simpler notation~\eqref{misfit}, \eqref{approx_phi}.
  Variants of modified stochastic steps 
in the original algorithms are presented, 
and using Theorems~\ref{main_trace_theorem_lower} and~\ref{main_trace_theorem_upper}, 
the uncertainties in these steps are quantified. 
More specifically, in the main algorithm introduced in~\cite{rodoas1},
following a stabilized GN iteration on the approximated objective function 
using the approximated misfit, 
the iterate is  updated, and some (or all) of the following steps are performed: 
\begin{enumerate}[(i)]
\item {\em cross validation} -- approximate assessment of this iterate in terms of sufficient decrease in the
objective function using a control set of random combinations of measurements. More specifically, 
at the $k^{th}$ iteration with the new iterate $\mm_{k+1}$, we test whether the condition 
\begin{equation}
\widehat{\phi}(\mm_{k+1},n_{k}) \leq \kappa \widehat{\phi}(\mm_{k},n_{k})
\label{cross_valid}
\end{equation}
(cf.~\eqref{approx_phi}) holds for some $\kappa \leq 1$,
employing an independent set of weight vectors used in both approximations of $\phi$;
\item {\em uncertainty check}
-- upon success of cross validation, an inexpensive plausible termination
test is performed where,
given a tolerance $\rho$, we check for the condition
\begin{equation}
\widehat{\phi}(\mm_{k+1},n_{k}) \leq \rho
\label{uncert_check}
\end{equation}
using a fresh set of random weight vectors; and 
\item {\em stopping criterion} -- upon success of the uncertainty check, 
an additional independent and potentially more rigorous termination test 
against the given tolerance $\rho$ is performed (possibly using the original misfit function).
\end{enumerate}

The role of the cross validation step within an iteration is 
to assess 
whether the true objective function at the current iterate has (sufficiently) decreased compared to the previous one.
If this test fails, we deem that the current sample size is not sufficiently large to yield an update that decreases the original objective,
and the fitting step needs to be repeated using a larger sample size, see~\cite{doas3}. 
In~\cite{rodoas1}, this step was used heuristically, so 
the amount of uncertainty in such validation of the current iterate was not quantified. 
Consequently, 
there was no handle on the amount of false positives/negatives in such approximate evaluations 
(e.g., a sample size could be deemed too small while the stabilized GN iteration
has in fact produced an acceptable iterate). 
In addition, in~\cite{rodoas1} the sample size for the uncertainty check was heuristically chosen. 
So this step was also performed with no control over the amount of uncertainty. 

For the stopping criterion step in~\cite{rodoas1,doas3}, 
the objective function was accurately evaluated using all $s$ 
experiments, which is clearly a very expensive choice for an algorithm termination check.
This was a judicious decision made in order to be able to have a fairer comparison of the new and different methods 
proposed there. Replacement of this termination criterion by another independent heuristic ``uncertainty check''
is experimented with in~\cite{rodoas2}. 

In this section, we address the issues of quantifying the uncertainty in the validation, uncertainty check 
and stopping criterion steps within a nonlinear iteration. 
In what follows, we assume for simplicity that the iterations are performed on the 
objective~\eqref{misfit} using dynamic regularization 
(or iterative regularization \cite{hansen1998,doas1,doas3}) 
where the regularization is performed implicitly. Extension to the case~\eqref{objective} 
is straight forward. Throughout, we assume to be given a pair of positive and small 
probabilistic tolerance numbers, $(\veps,\delta)$.


\subsection{Cross validation step with quantified uncertainty}
\label{sec:cross_valid}
The condition~\eqref{cross_valid} is an independent, unbiased indicator of 
\begin{equation*}
\phi(\mm_{k+1}) \leq \kappa \phi(\mm_{k}),
\end{equation*}
which indicates sufficient decrease in the objective.
If~\eqref{cross_valid} is satisfied then the current sample size, $n_{k}$, 
is considered sufficiently large to capture the original misfit well enough to produce a valid 
iterate, and the algorithm continues using the same sample size. 
Otherwise, the sample size is deemed insufficient and is increased.
Using Theorems~\ref{main_trace_theorem_lower} and~\ref{main_trace_theorem_upper}, 
we can now remove the heuristic characteristic as to \textit{when} this sample size increase has been
performed hitherto,
and present two variants of~\eqref{cross_valid} where the uncertainties in the validation step are quantified. 

Assume 
we have a sample size $n_{c}$
such that
\begin{subequations}
\begin{eqnarray}
Pr\left( \widehat{\phi}(\mm_{k},n_{c}) \leq (1+\veps) \phi(\mm_{k}) \right) &\geq& 1-\delta ,\\
Pr\left( \widehat{\phi}(\mm_{k+1},n_{c}) \geq (1-\veps) \phi(\mm_{k+1}) \right) &\geq& 1-\delta.%
\end{eqnarray}%
\label{cross_valid_prob_hard}
\end{subequations}%
If in the procedure outlined above, after obtaining the updated iterate $\mm_{k+1}$, we verify that  
\begin{equation}
\widehat{\phi}(\mm_{k+1},n_{c}) \leq \kappa \left( \frac{1-\veps}{1+\veps} \right) \widehat{\phi}(\mm_{k},n_{c}),
\label{cross_valid_hard}
\end{equation}
then it follows from~\eqref{cross_valid_prob_hard} that $\phi(\mm_{k+1}) \leq \kappa \phi(\mm_{k})$ with a probability of, at least, $(1-\delta)^{2}$. In other words, success of~\eqref{cross_valid_hard} indicates that the updated iterate decreases the value of the original misfit~\eqref{misfit} with a probability of, at least, $(1-\delta)^{2}$.

Alternatively, suppose that we have
\begin{subequations}
\begin{eqnarray}
Pr\left( \widehat{\phi}(\mm_{k},n_{c}) \geq (1-\veps) \phi(\mm_{k}) \right) &\geq& 1-\delta ,\\
Pr\left( \widehat{\phi}(\mm_{k+1},n_{c}) \leq (1+\veps) \phi(\mm_{k+1}) \right) &\geq& 1-\delta.
\end{eqnarray}%
\label{cross_valid_prob_soft}%
\end{subequations}%
Now, if instead of~\eqref{cross_valid_hard} we check whether or not
\begin{equation}
\widehat{\phi}(\mm_{k+1},n_{c}) \leq \kappa \left( \frac{1+\veps}{1-\veps} \right) 
\widehat{\phi}(\mm_{k},n_{c}),
\label{cross_valid_soft}
\end{equation}
then it follows from~\eqref{cross_valid_prob_soft} that if the condition~\eqref{cross_valid_soft} 
is {\em not} satisfied, then $\phi(\mm_{k+1}) > \kappa \phi(\mm_{k})$ with a probability of, at least, 
$(1-\delta)^{2}$. In other words, failure of~\eqref{cross_valid_soft} indicates that the updated 
iterate results in an insufficient decrease 
in the original misfit~\eqref{misfit} with a probability of, at least, $(1-\delta)^{2}$.

We can replace~\eqref{cross_valid} with either of the conditions~\eqref{cross_valid_hard} 
or~\eqref{cross_valid_soft} and use the conditions~\eqref{max_bnd_N_lower} or~\eqref{max_bnd_N_upper} 
to calculate the cross validation sample size, $n_{c}$. If the relevant check~\eqref{cross_valid_hard} 
or~\eqref{cross_valid_soft} fails, we deem the sample size used in the fitting step, $n_{k}$, 
to be too small to produce an iterate which decreases the original misfit~\eqref{misfit}, 
and consequently consider increasing the sample size, $n_{k}$. Note that since 
$\frac{1-\veps}{1+\veps} < 1 < \frac{1+\veps}{1-\veps}$,
the condition~\eqref{cross_valid_hard} results in a more aggressive strategy for increasing 
the sample size used in the fitting step than the condition~\eqref{cross_valid_soft}. 
Figure~\ref{fig:pde_example_sample_size} in Section~\ref{sec:application} demonstrates this
within the context of an application.

\noindent \textbf{Remarks:} 
\begin{enumerate}[(i)]
\item Larger values of $\veps$ result in more aggressive (or relaxed) descent requirement by the condition~\eqref{cross_valid_hard} (or~\eqref{cross_valid_soft}).  
\item 
As the iterations progress and we get closer to the solution, 
the decrease in the original objective could be less than what is imposed by~\eqref{cross_valid_hard}. 
As a result, if $\veps$ is too large, we might never successfully pass the cross validation test. 
One useful strategy to alleviate this is to start with a larger $\veps$, 
decreasing it as we get closer to the solution. 
A similar strategy can be adopted for the case when the condition~\eqref{cross_valid_soft} is used as 
a cross validation: as the iterations get closer to the solution, one can make the condition~\eqref{cross_valid_soft} less relaxed by decreasing $\veps$. 
\end{enumerate}



\subsection{Uncertainty check with quantified uncertainty and efficient stopping criterion}
\label{sec:uncert_check}
The usual test for terminating the iterative process is to check whether 
\begin{equation}
\phi(\mm_{k+1}) \leq \rho ,
\label{stop_crit}
\end{equation}
for a given tolerance $\rho$.
However, this can be very expensive in our current context;
%
see Section~\ref{sec:pde_manys} and Tables~\ref{table01_2level} and~\ref{table01_3level} for examples of a scenario where one misfit evaluation using the entire data set can be as expensive as the entire cost of an efficient, complete algorithm. 
In addition, if 
the exact value of the tolerance $\rho$ is not known
(which is usually the case in practice), 
one should be able to reflect such uncertainty in the stopping criterion and perform a 
softer version of~\eqref{stop_crit}. 
Hence, it could be useful to have an algorithm which allows one to adjust the cost and accuracy 
of such an evaluation in a quantifiable way, and find the balance that is suitable to particular objectives 
and computational resources. 

Regardless of the issues of cost and accuracy, this evaluation should be carried out as 
rarely as possible and only when deemed timely. In~\cite{rodoas1}, we addressed this by 
employing an ``uncertainty check''~\eqref{uncert_check} as described earlier in this section, heuristically.
%
Using Theorems~\ref{main_trace_theorem_lower} and~\ref{main_trace_theorem_upper}, 
we now devise variants of~\eqref{uncert_check} with quantifiable uncertainty. 
Subsequently, again using Theorems~\ref{main_trace_theorem_lower} and~\ref{main_trace_theorem_upper}, we present a much cheaper stopping criterion than~\eqref{stop_crit} which, at the same time, reflects our uncertainty in the given tolerance. 

Assume that we have
a sample size $n_{u}$
such that
\begin{equation}
Pr\left( \widehat{\phi}(\mm_{k+1},n_{u}) \geq (1-\veps) \phi(\mm_{k+1}) \right) \geq 1-\delta.
\label{uncert_check_prob_hard}
\end{equation}
If the updated iterate, $\mm_{k+1}$, successfully passes the cross validation test, then we check for  
\begin{equation}
\widehat{\phi}(\mm_{k+1},n_{u}) \leq (1-\veps) \rho .
\label{uncert_check_hard}
\end{equation}
If this holds too then it follows from~\eqref{uncert_check_prob_hard} that $\phi(\mm_{k+1}) \leq \rho$ with a probability of, at least,  $(1-\delta)$. In other words, success of~\eqref{uncert_check_hard} indicates that the misfit is likely to be below the tolerance with a probability of, at least, $(1-\delta)$.

Alternatively, suppose that
\begin{eqnarray}
Pr\left( \widehat{\phi}(\mm_{k+1},n_{u}) \leq (1+\veps) \phi(\mm_{k+1}) \right) \geq 1-\delta ,
\label{uncert_check_prob_soft}
\end{eqnarray}
and instead of~\eqref{uncert_check_hard} we check for
\begin{equation}
\widehat{\phi}(\mm_{k+1},n_{u}) \leq (1+\veps) \rho .
\label{uncert_check_soft}
\end{equation}
then it follows from~\eqref{uncert_check_prob_soft} that if the condition~\eqref{uncert_check_soft} is {\em not} satisfied, then $\phi(\mm_{k+1}) > \rho$ with a probability of, at least,  $(1-\delta)$. In other words, failure of~\eqref{uncert_check_soft} indicates that using the updated iterate, the misfit is likely to be  still above the desired tolerance with a probability of, at least,  $(1-\delta)$.

We can replace~\eqref{uncert_check} with the condition~\eqref{uncert_check_hard} 
(or~\eqref{uncert_check_soft}) and use the condition~\eqref{max_bnd_N_lower} (or~\eqref{max_bnd_N_upper}) 
to calculate the uncertainty check sample size, $n_{u}$. If the test~\eqref{uncert_check_hard} 
(or~\eqref{uncert_check_soft}) fails then we skip the stopping criterion check and continue iterating. 
Note that since $(1-\veps) < 1 < (1+\veps)$,
the condition~\eqref{uncert_check_hard} results in fewer false positives than the 
condition~\eqref{uncert_check_soft}. On the other hand, the condition~\eqref{uncert_check_soft} is 
expected to results in fewer false negatives than the condition~\eqref{uncert_check_hard}. 
The choice of either alternative is dependent on one's requirements, resources and the application on hand.

The stopping criterion step can be performed in the same way as the uncertainty check but 
potentially with higher certainty in the outcome. 
In other words, for the stopping criterion we can choose a smaller $\delta$, resulting in a larger sample size 
$n_{t}$ satisfying $n_t > n_{u}$, and check for satisfaction of either
\begin{subequations}
\begin{equation}
\widehat{\phi}(\mm_{k+1},n_{t}) \leq (1-\veps) \rho ,\label{stop_crit_hard}
\end{equation}
or
\begin{equation}
\widehat{\phi}(\mm_{k+1},n_{t}) \leq (1+\veps) \rho . \label{stop_crit_soft}
\end{equation}
\label{stop_crits}
\end{subequations}
Clearly the condition~\eqref{stop_crit_soft} is a softer than~\eqref{stop_crit_hard}: a 
successful~\eqref{stop_crit_soft} is only necessary and not sufficient for concluding that~\eqref{stop_crit} 
holds with the prescribed probability. 

In practice, when the value of the stopping criterion threshold, $\rho$, is not {\em exactly} known 
(it is often crudely estimated using the measurements), 
one can reflect such uncertainty in $\rho$ by choosing an appropriately large $\delta$. 
Smaller values of $\delta$ reflect a higher certainty in $\rho$ and a more rigid stopping criterion.

\noindent \textbf{Remarks:} 
\begin{enumerate}[(i)]
\item If $\veps$ is large 
then using~\eqref{stop_crit_hard}, one might run the risk of over-fitting. 
Similarly, using~\eqref{stop_crit_soft} with large $\veps$, there is a risk of under-fitting. 
Thus, appropriate values of $\veps$ need to be considered in accordance with the application 
and one's computational resources and experience.
\item The same issues regarding large $\veps$ arise when employing the uncertainty check 
condition~\eqref{uncert_check_hard} (or ~\eqref{uncert_check_soft}): 
large $\veps$ might increase the frequency of false negatives (or positives).
\end{enumerate}

\subsection{Algorithm}
\label{sec:alg}
We now present an efficient, stochastic, iterative algorithm for approximately solving NLS formulations of~\eqref{misfit} or~\eqref{objective}. 
By performing cross validation, uncertainty check and stopping criterion as descried in Section~\ref{sec:cross_valid} 
and Section~\ref{sec:uncert_check}, we can devise 8 variants of Algorithm~\ref{alg1} 
below. Depending on the application, the variant of choice can be selected appropriately. 
More specifically, cross validation, uncertainty check and stopping criterion can, 
respectively, be chosen to be one of the following combinations (referring to their equation numbers):

\begin{table*}[htb]
\begin{center}
\addtolength{\tabcolsep}{-1.5pt}
\begin{tabular}{|c|c|c|c|}
\hline 
(i) (\ref{cross_valid_hard} - \ref{uncert_check_hard} - \ref{stop_crit_hard}) &
(ii) (\ref{cross_valid_hard} - \ref{uncert_check_hard} - \ref{stop_crit_soft}) & 
(iii) (\ref{cross_valid_hard} - \ref{uncert_check_soft} - \ref{stop_crit_hard}) & 
(iv) (\ref{cross_valid_hard} - \ref{uncert_check_soft} - \ref{stop_crit_soft}) \\ \hline
(v) (\ref{cross_valid_soft} - \ref{uncert_check_hard} - \ref{stop_crit_hard}) &
(vi) (\ref{cross_valid_soft} - \ref{uncert_check_hard} - \ref{stop_crit_soft}) &
(vii) (\ref{cross_valid_soft} - \ref{uncert_check_soft} - \ref{stop_crit_hard}) &
(viii) (\ref{cross_valid_soft} - \ref{uncert_check_soft} - \ref{stop_crit_soft}) \\ \hline
\end{tabular}
\end{center}
\end{table*}

\noindent \textbf{Remark:} 
\begin{enumerate}[(i)]
\item The sample size, $n_k$, used in the fitting step of Algorithm~\ref{alg1} 
could in principle be determined by
Corollary~\ref{main_trace_corollary}, using a pair of tolerances $(\veps_{f},\delta_{f})$.
If cross validation~\eqref{cross_valid_hard} (or~\eqref{cross_valid_soft}) fails, the tolerance pair $(\veps_{f},\delta_{f})$ is reduced to obtain, in the next iteration, a larger fitting sample size, $n_{k+1}$.
This would give a sample size which yields 
 a quantifiable approximation with a desired relative accuracy. 
 However, in the presence of all the added safety steps described in this section, 
 we have found in practice that Algorithm~\ref{alg1} is capable of producing a satisfying recovery, 
 even with a significantly smaller $n_{k}$ than the one predicted by Corollary~\ref{main_trace_corollary}. 
 Thus, the ``\textit{how}'' of the fitting sample size increase is left to heuristic 
 (as opposed to its ``\textit{when}'', which is quantified as described in Section~\ref{sec:cross_valid}). 
 \item In the algorithm below, we only consider fixed values (i.e., independent of $k$) for $\veps$ 
and $\delta$. One can easily modify Algorithm~\ref{alg1} 
 to incorporate non-stationary values which adapt to the iteration process, 
as mentioned in the closing remark of Section~\ref{sec:cross_valid}.
\end{enumerate}
In Algorithm~\ref{alg1}, when we draw vectors $\ww_i$ for some purpose, we always draw them independently
from the standard normal distribution.


\section{A practical application}
\label{sec:application}
In this section, we demonstrate the efficacy of Algorithm~\ref{alg1} by applying it to an 
important class of problems that arise often in practice: 
large scale partial differential equation (PDE) inverse problems with many measurements. 
We show below the capability of our method by applying it to such examples 
in the context of the DC resistivity/EIT problem, as in~\cite{doas3,rodoas1,rodoas2}.

\subsection{PDE inverse problems with many measurements}
\label{sec:pde_manys}
The context considered here is one where each evaluation of $\ff_i(\mm)$ in~\eqref{forward_op}
is computationally expensive.
The evaluation of the misfit function $\phi (\mm)$ is especially costly when many
experiments, involving different combinations of sources and receivers, 
are employed in order to obtain reconstructions of acceptable quality. 
The sought model $\mm$ is a discretization of the function $m(\xx)$ as described in
Section~\ref{sec:intro},
and
\begin{subequations}
\begin{eqnarray}
\ff_{i}(\mm) = P_i\uu_i = P_i L(\mm)^{-1} \qq_i . \label{1.5b}
\end{eqnarray}
Here we write the PDE system
in discretized form as
\begin{eqnarray}
L(\mm) \uu_i = \qq_i, \quad i = 1, \ldots , s,
\label{1.5a}
\end{eqnarray}
where $\uu_i \in \R^{l_q}$ is the $i$th field, $\qq_i \in \R^{l_q}$ is the $i$th source, 
and $L$ is a square matrix discretizing the PDEs plus appropriate side conditions.
Furthermore, the given projection matrices $P_i$ are such that $\ff_i(\mm)$
predicts the $i$th data set.
Note that the notation \eqref{1.5a} reflects an assumption of linearity in $\uu$ but not in $\mm$~\cite{rodoas1}. 
\label{1.5}
\end{subequations}

\begin{algorithm}
\caption{Solve NLS formulation of~\eqref{misfit} (or~\eqref{objective}) using uncertainty check, 
cross validation and cheap stopping criterion}
\begin{algorithmic}
\STATE \textbf{Given:} sources $\qq_{i}\;, i=1,\ldots,s$, measurements 
$\dd_{i}\;, i=1,\ldots,s$,  stopping criterion level $\rho$, objective function sufficient decrease factor $\kappa \leq 1$, pairs of small numbers $(\veps_{c},\delta_{c})$, $(\veps_{u},\delta_{u})$, 
$(\veps_{t},\delta_{t})$, and initial guess $\mm_{0}$.
\STATE \textbf{Initialize}: 
\STATE - $\mm = \mm_{0} \; , \; n_{0} = 1$
\STATE - Calculate the cross validation sample size, $n_{c}$, as described in Section~\ref{sec:cross_valid} with $(\veps_{c},\delta_{c})$.
\STATE - Calculate the sample sizes for uncertainty check, $n_{u}$, and stopping criterion, $n_{t}$, as described in Section~\ref{sec:uncert_check} with $(\veps_{u},\delta_{u})$ and $(\veps_{t},\delta_{t})$, respectively.
\FOR {$k = 0,1,2, \cdots$ until termination} 
\STATE \textbf{Fitting}: 
\STATE - Draw $\ww_{i}\;, i=1,\ldots,n_{k}$. 
\STATE - Approximate the misfit term and potentially its gradient in~\eqref{misfit} or~\eqref{objective} 
using~\eqref{approx_phi} with the above weights and $n = n_{k}$.
\STATE - Find an update for the objective function using the approximated misfit~\eqref{approx_phi}.
\STATE \textbf{Cross Validation}: 
\STATE - Draw $\ww_{i}\;, i=1,\ldots,n_{c}$. 
\IF {\eqref{cross_valid_hard} (or~\eqref{cross_valid_soft}) holds}
\STATE \textbf{Uncertainty Check}: 
\STATE - Draw $\ww_{i}\;, i=1,\ldots,n_{u}$. 
\IF {\eqref{uncert_check_hard} (or~\eqref{uncert_check_soft}) holds}
\STATE \textbf{Stopping Criterion}: 
\STATE - Draw $\ww_{i}\;, i=1,\ldots,n_{t}$. 
\IF {\eqref{stop_crit_hard} (or~\eqref{stop_crit_soft}) holds}
\STATE - Terminate
\ENDIF
\ENDIF
\STATE - Set $n_{k+1} = n_k$.
\ELSE
\STATE - \textbf{Sample Size Increase}: for example, set $n_{k+1} = \min(2 n_{k},s)$.
\ENDIF
\ENDFOR
\end{algorithmic}
\label{alg1}
\end{algorithm}


If the locations where data are measured do not change from one experiment to another, 
i.e., $P = P_{i}, \forall i$, then we get
\begin{eqnarray}
\ff(\mm,\qq_{i}) = P L(\mm)^{-1} \qq_i \label{pde_forward},
\end{eqnarray}
and the linearity assumption of $\ff(\mm,\qq)$ in $\qq$ is satisfied. 
Thus, we can use Algorithm~\ref{alg1} to efficiently recover $\mm$ and be quantifiably 
confident in the recovered model. 
If the $P_{i}$'s are different across experiments, there are methods to extend the existing data set 
to one where all sources share the same receivers, see~\cite{rodoas2,hach12}. 
Using these methods (when they apply!), one can effectively transform the problem~\eqref{1.5b} 
to~\eqref{pde_forward}, for which Algorithm~\ref{alg1} can be employed.

There are several problems of practical interest in the form~\eqref{misfit},~\eqref{1.5},
where the use of many experiments, resulting in a large number $s$,
is crucial for obtaining credible reconstructions in practical situations.
These include electromagnetic data inversion in mining exploration 
(e.g.,~\cite{na,dmr,haasol,olhash}), 
seismic data inversion in oil exploration (e.g.,~\cite{fichtner,hel,rnkkda}), diffuse optical tomography (DOT) (e.g.,~\cite{arridge,boas}), quantitative photo-acoustic tomography (QPAT) (e.g.,~\cite{gaooscher,yuan}), 
direct current (DC) resistivity (e.g.,~\cite{smvoz,pihakn,haheas,HaberChungHermann2010,doas3}),  
and electrical impedance tomography (EIT) (e.g.,~\cite{bbp,cin,doasha}).

Our examples are performed in the context of solving the DC resistivity problem.
The PDE has the form
\begin{subequations}
\begin{eqnarray}
\div (\mu(\xx) \grad u) = q(\xx), \quad \xx \in  \Omega ,
\label{2.1a}
\end{eqnarray}
where $\Omega \subset \R^d$, $d = 2$ or $3$, and
$\mu(\xx)$ is a conductivity function which may be rough\footnote{In theory, the conductivity function is defined so that $\mu \in L_{\infty}(\Omega)$, and hence it can be very rough.} (e.g., discontinuous).
However, the PDE is coercive: there is
a constant $\mu_0 > 0$ such that $\mu(\xx) \geq \mu_{0} , \; \forall \xx \in \Omega$.
It is possible to inject some a priori information on $\mu$, when such is available, via a parametrization
of $\mu(\xx)$ in terms of $m(\xx)$ using an appropriate transfer function $\psi$ as $\mu(\xx) = \psi(m(\xx))$. 
For example, $\psi$ can be chosen so as to ensure that the conductivity stays positive and bounded away from $0$, as well as to incorporate bounds, which are often known in practice, on the sought conductivity function. 
Some possible choices of function $\psi$ are described in~\cite[Appendix A]{rodoas1}. Here we take $\Omega$ to be the unit square in 2D, and assume 
the homogeneous Neumann boundary conditions
\begin{eqnarray}
\frac{\partial u}{\partial n} = 0, \quad \xx \in \partial\Omega. \label{2.1b}
\end{eqnarray}
\label{2.1}
\end{subequations}

The inverse problem is then to recover $m$ in $\Omega$ from sets of measurements of $u$
on the domain's boundary for different sources $q$. 
Details of the numerical methods employed here, both for defining the predicted data $\ff$
and for solving the inverse problem in appropriately transformed variables, can be found
in~\cite[Appendix A]{rodoas1}. 

\subsection{Numerical experiments}
\label{sec:simulations}

Below we consider two examples, each having a piecewise constant ``exact solution'',
or ``true model'', used to synthesize data:
\begin{enumerate}[(E.1)]
	\item in our simpler model a target object with conductivity 
$\mu_{t} = 1$ has been placed in a background medium 
with conductivity $\mu_{b} = 0.1$ (see Figure~\ref{fig:pde_example_recovery_Vanilla_2_level}(a)); and \label{numer_setup_a}
\item in a slightly more complex setting a conductive object with conductivity $\mu_{c} = 0.01$,
as well as a resistive one with conductivity
$\mu_{r} = 1$, have been placed in a background medium 
with conductivity $\mu_{b} = 0.1$ (see Figure~\ref{fig:pde_example_recovery_Vanilla_3_level}(a)). Note that the recovery of the model in Example~\hyperref[numer_setup_b]{(E.2)} is more challenging than Example~\hyperref[numer_setup_a]{(E.1)} since here the dynamic range of the conductivity is much larger. \label{numer_setup_b}
\end{enumerate}
Details of the numerical setup for the following examples are given in Appendix~\ref{numerical_setup}.

\subsubsection{Example~\hyperref[numer_setup_a]{(E.1)}}

We carry out the 8 variants of Algorithm~\ref{alg1} for the parameter values
$(\veps_{c},\delta_{c}) = (0.05,0.3)$, $(\veps_{u},\delta_{u}) = (0.1,0.3)$, $(\veps_{t},\delta_{t}) = (0.1,0.1)$, 
and $\kappa = 1$. 
The resulting total count of PDE solves, which is 
the main computational cost of the iterative solution of such inverse problems, is reported 
in Tables~\ref{table01_2level} and~\ref{table01_3level}. As a point of reference, we
also include the total PDE count using the ``plain vanilla'' stabilized Gauss-Newton
method which employs the entire set of $s$ experiments at every iteration and misfit estimation task. 
The recovered conductivities are displayed
in Figures~\ref{fig:pde_example_recovery_2_level} and~\ref{fig:pde_example_recovery_3_level}, 
demonstrating that employing Algorithm~\ref{alg1} 
can drastically reduce the total work while obtaining equally acceptable reconstructions.

\begin{table}[!ht]
\begin{center}
\begin{tabular}{|ccccccccc|}
\hline 
Vanilla & (i) & (ii) & (iii) & (iv) & (v) & (vi) & (vii) & (viii)
\\ \hline 
436,590 & 4,058 & 4,028 & 3,764 & 3,282 & 4,597 & 3,850 & 3,734 & 3,321\\ \hline
\end{tabular}
\end{center}
\caption{Example~\hyperref[numer_setup_a]{(E.1)}. Work in terms of number of PDE solves for all variants of Algorithm~\ref{alg1},
described in Section~\ref{sec:alg} and indicated here by (i)--(viii). 
The ``vanilla'' count is also given, as a reference. \label{table01_2level}}
\end{table}

\begin{figure}[htb]
\centering
\subfigure[]{\includegraphics[scale=0.15]{./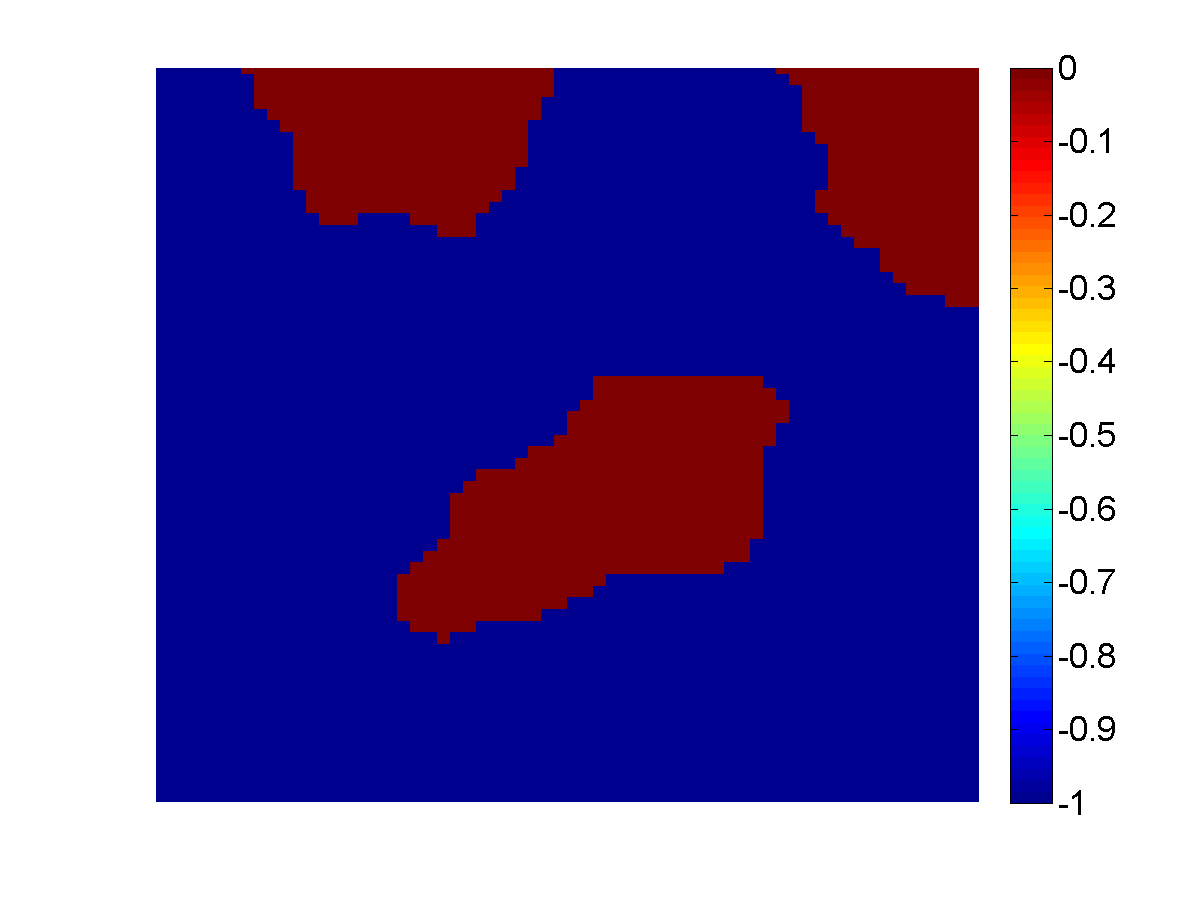}}
\subfigure[]{\includegraphics[scale=0.15]{./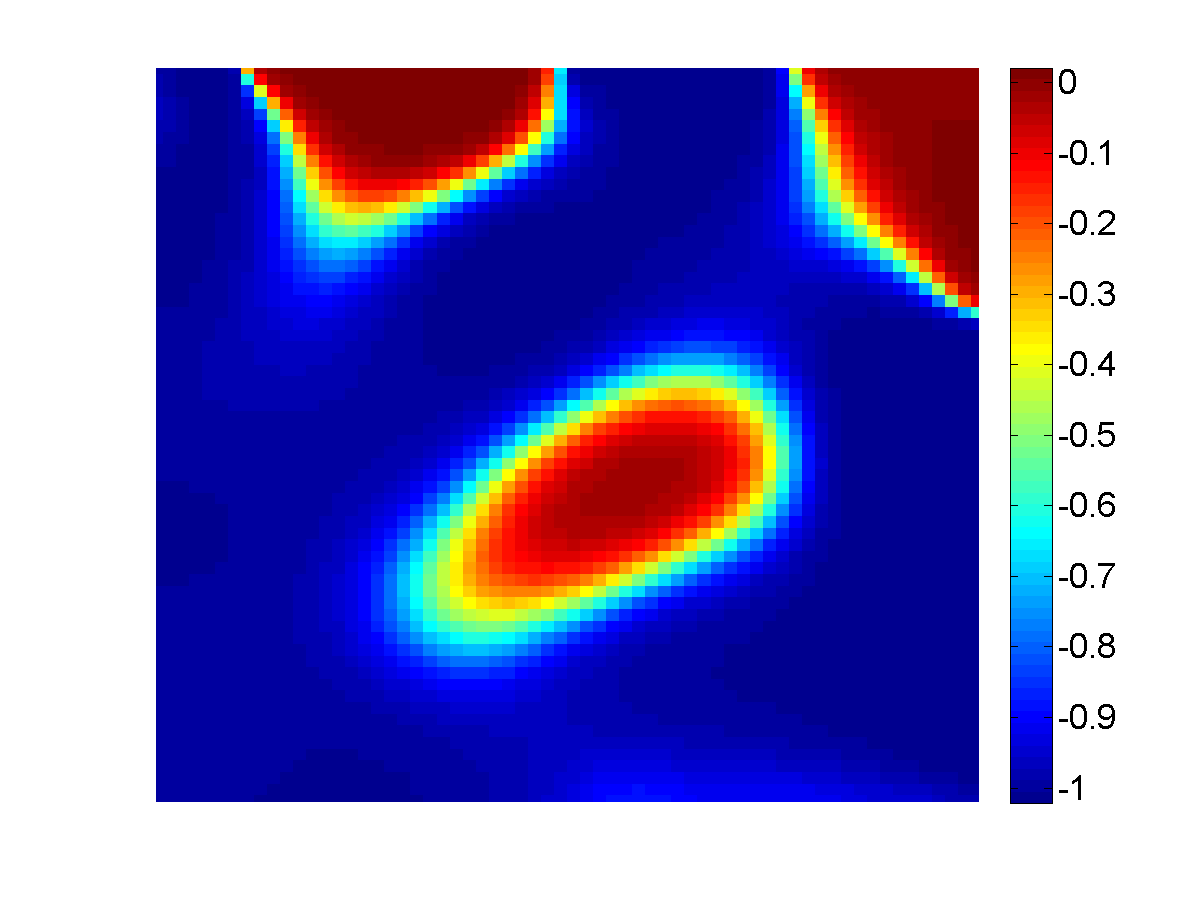}}
\subfigure[]{\includegraphics[scale=0.15]{./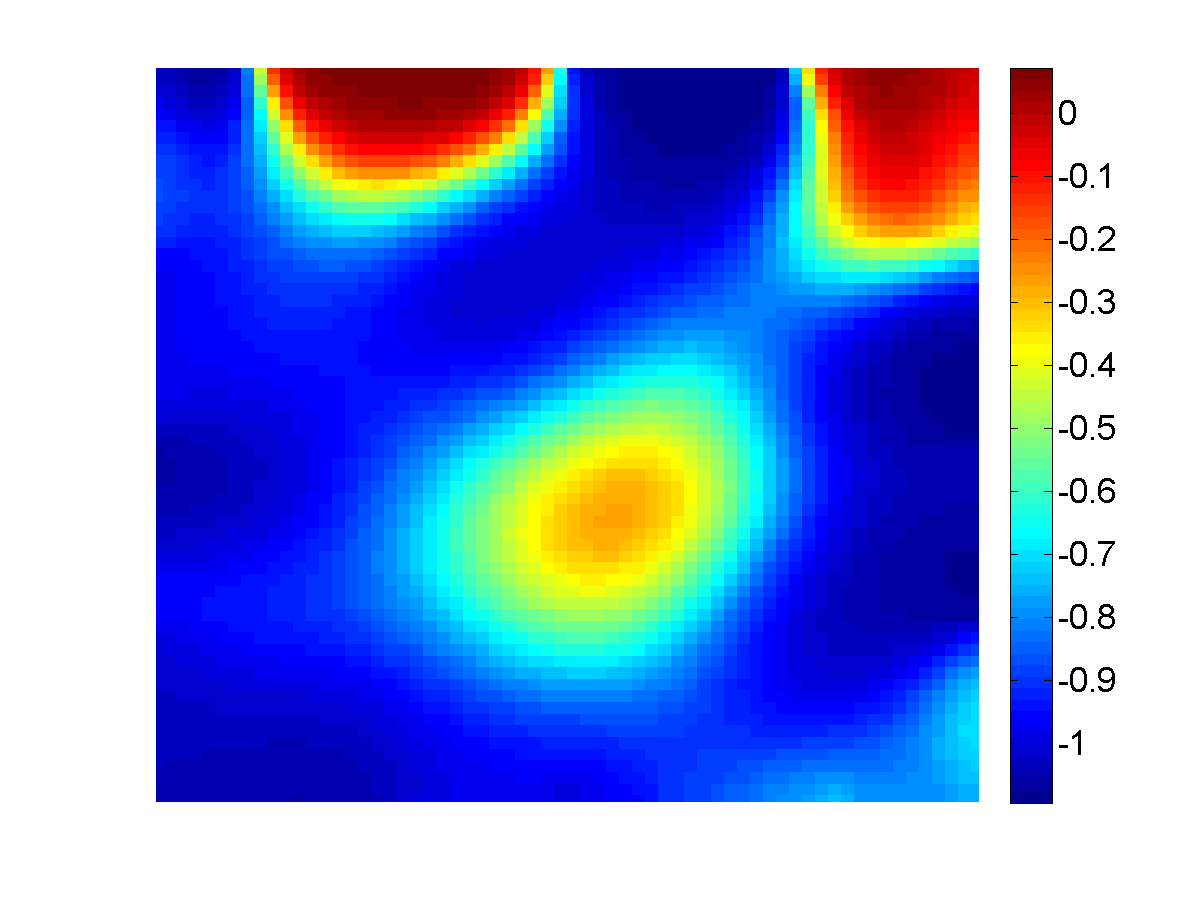}}
\caption{Example~\hyperref[numer_setup_a]{(E.1)}. Plots of log-conductivity: (a) True model; (b) Vanilla recovery with $s = 3,969$; (c) Vanilla recovery with $s=49$. The vanilla recovery using only $49$ measurement sets is clearly inferior, 
showing that a large number of measurement sets can be crucial for better reconstructions.}
\label{fig:pde_example_recovery_Vanilla_2_level}		
\renewcommand{\thesubfigure}{(\roman{subfigure})}
\subfigure[]{
\includegraphics[scale=0.15]{./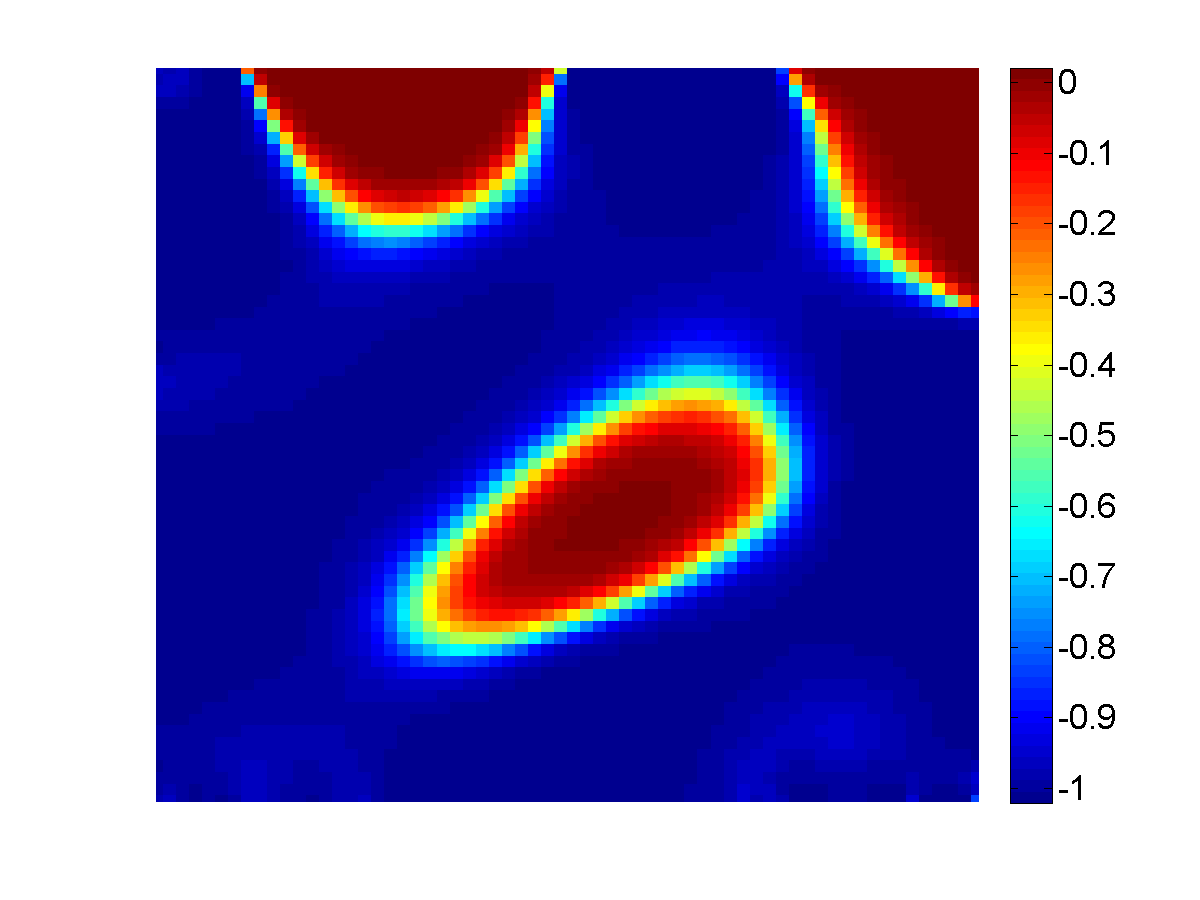}}
\subfigure[]{
\includegraphics[scale=0.15]{./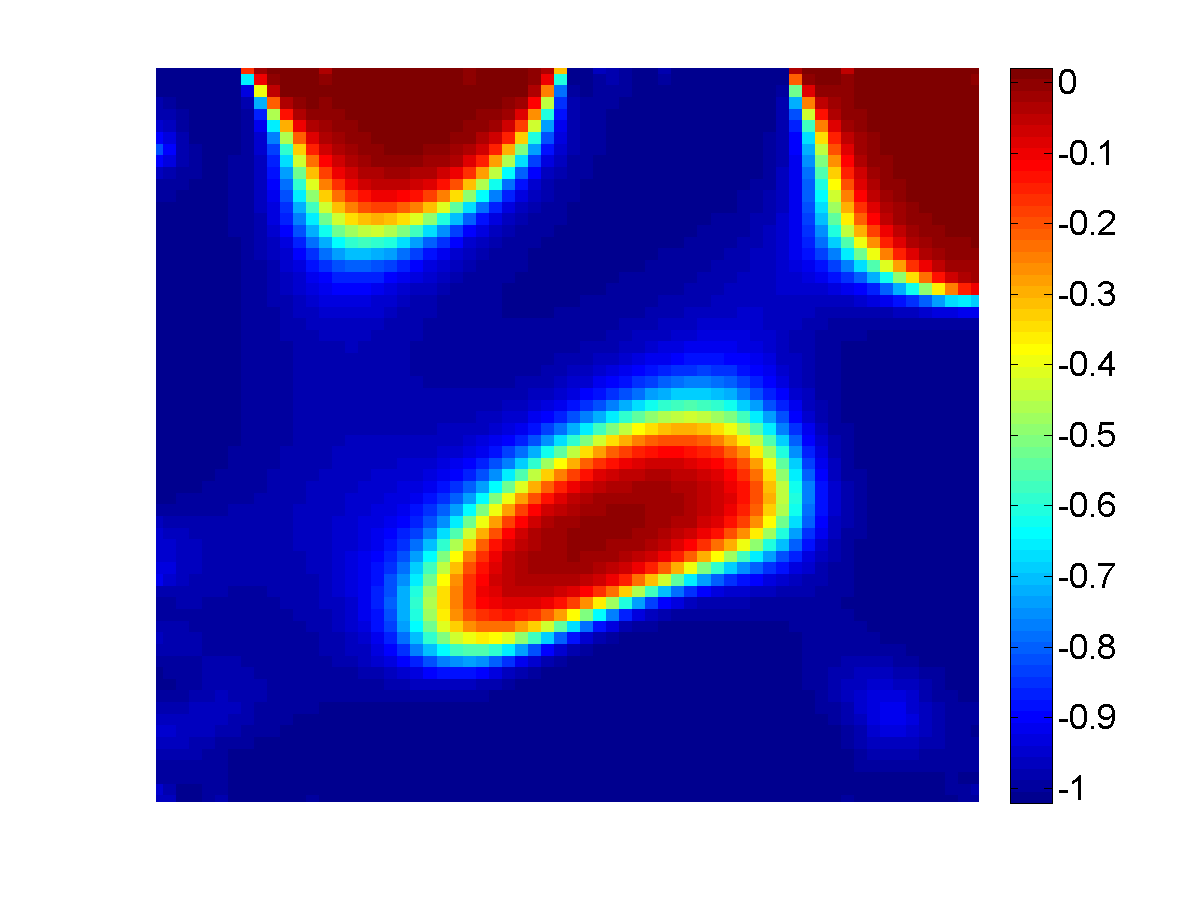}}
\subfigure[]{
\includegraphics[scale=0.15]{./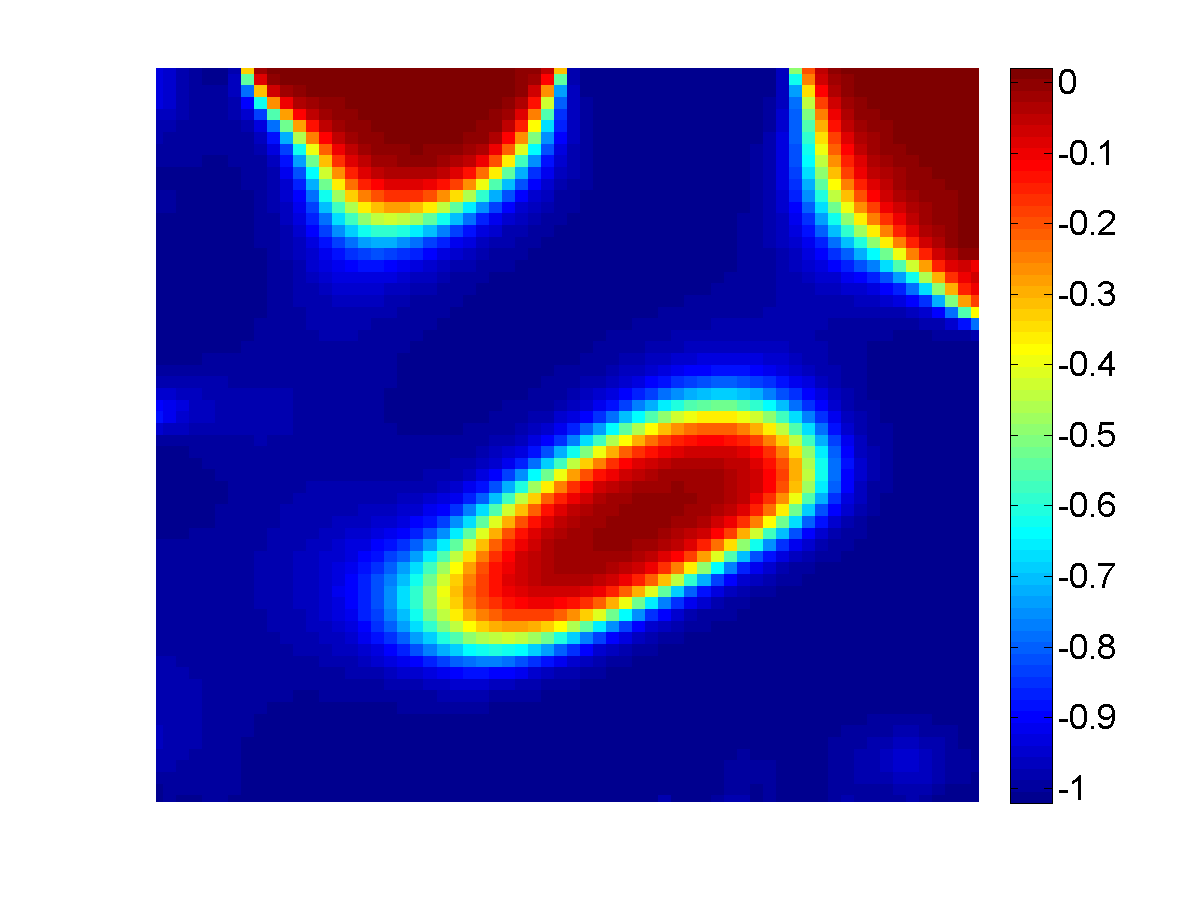}}
\subfigure[]{
\includegraphics[scale=0.15]{./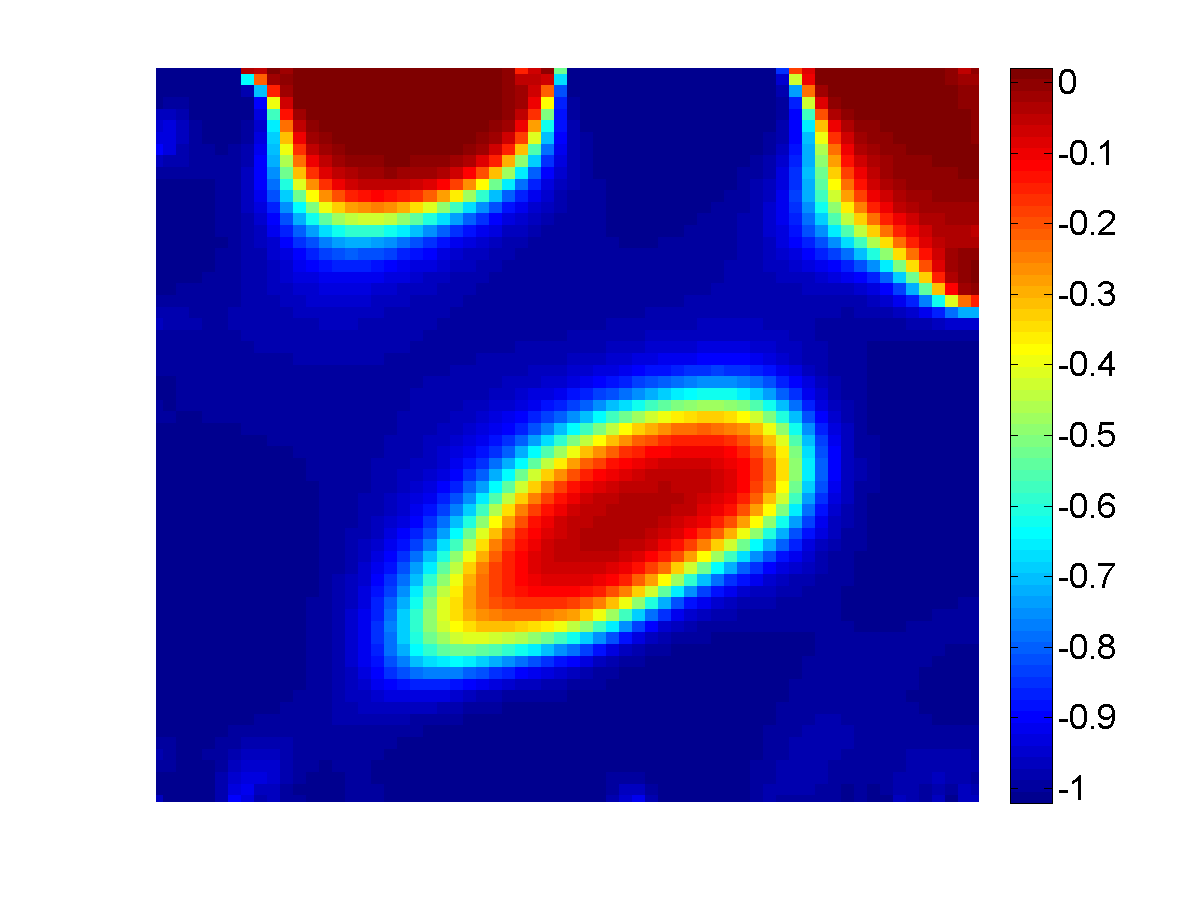}}
\subfigure[]{
\includegraphics[scale=0.15]{./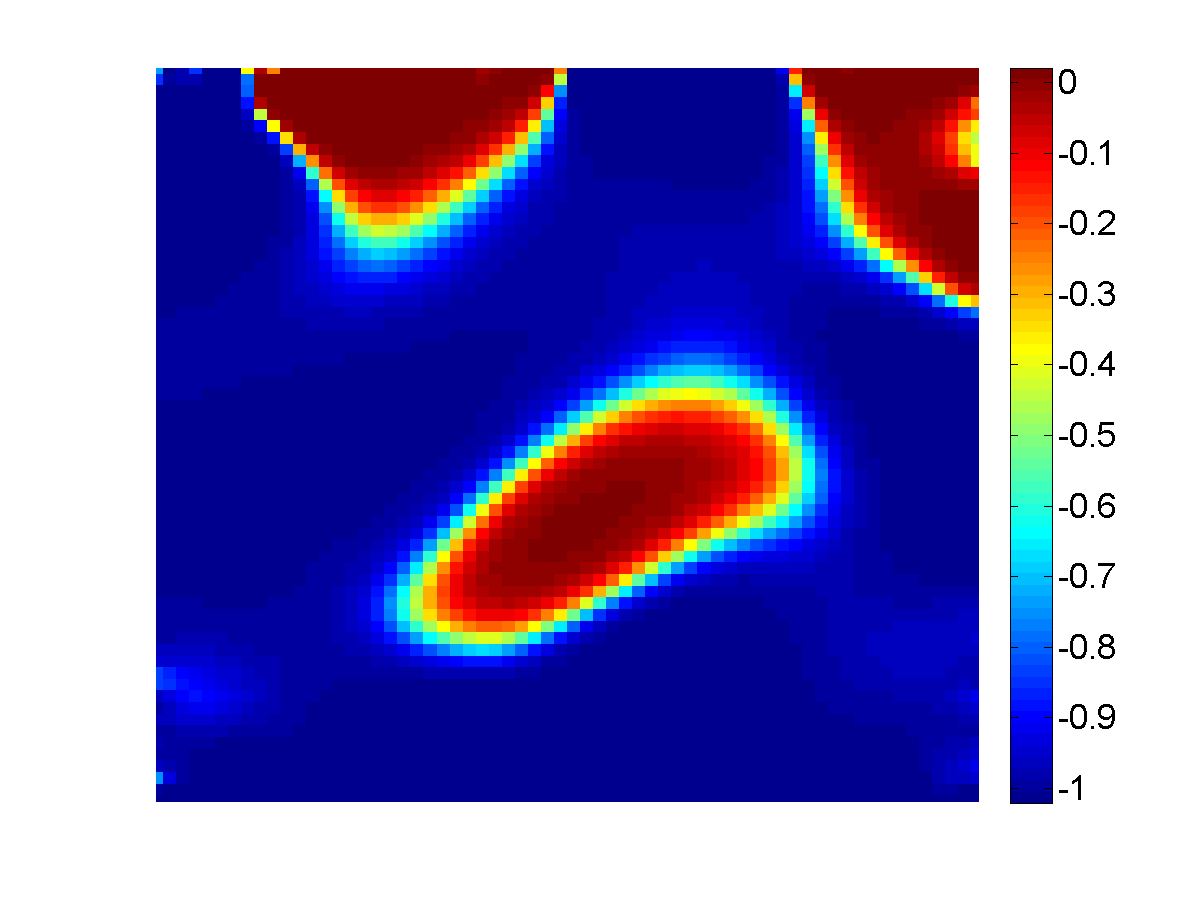}}
\subfigure[]{
\includegraphics[scale=0.15]{./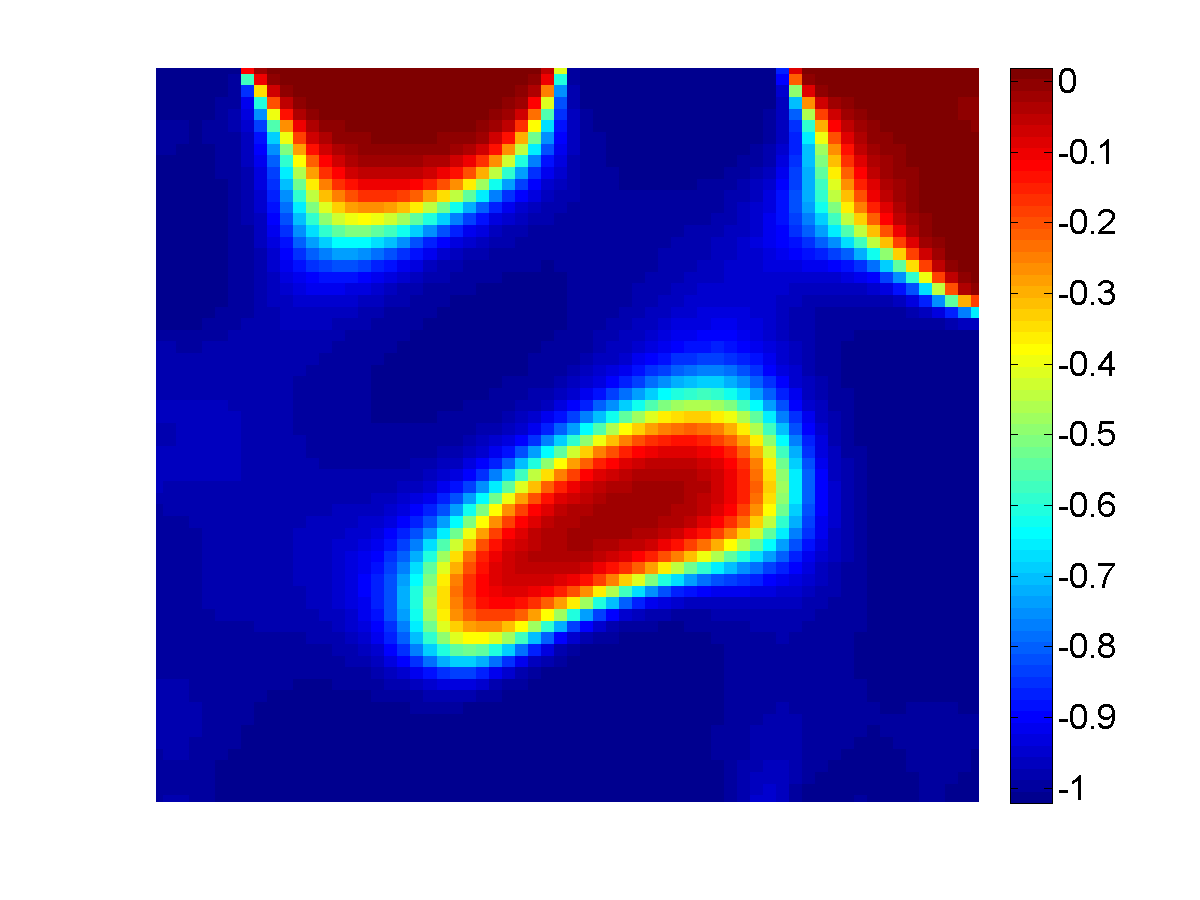}}
\subfigure[]{
\includegraphics[scale=0.15]{./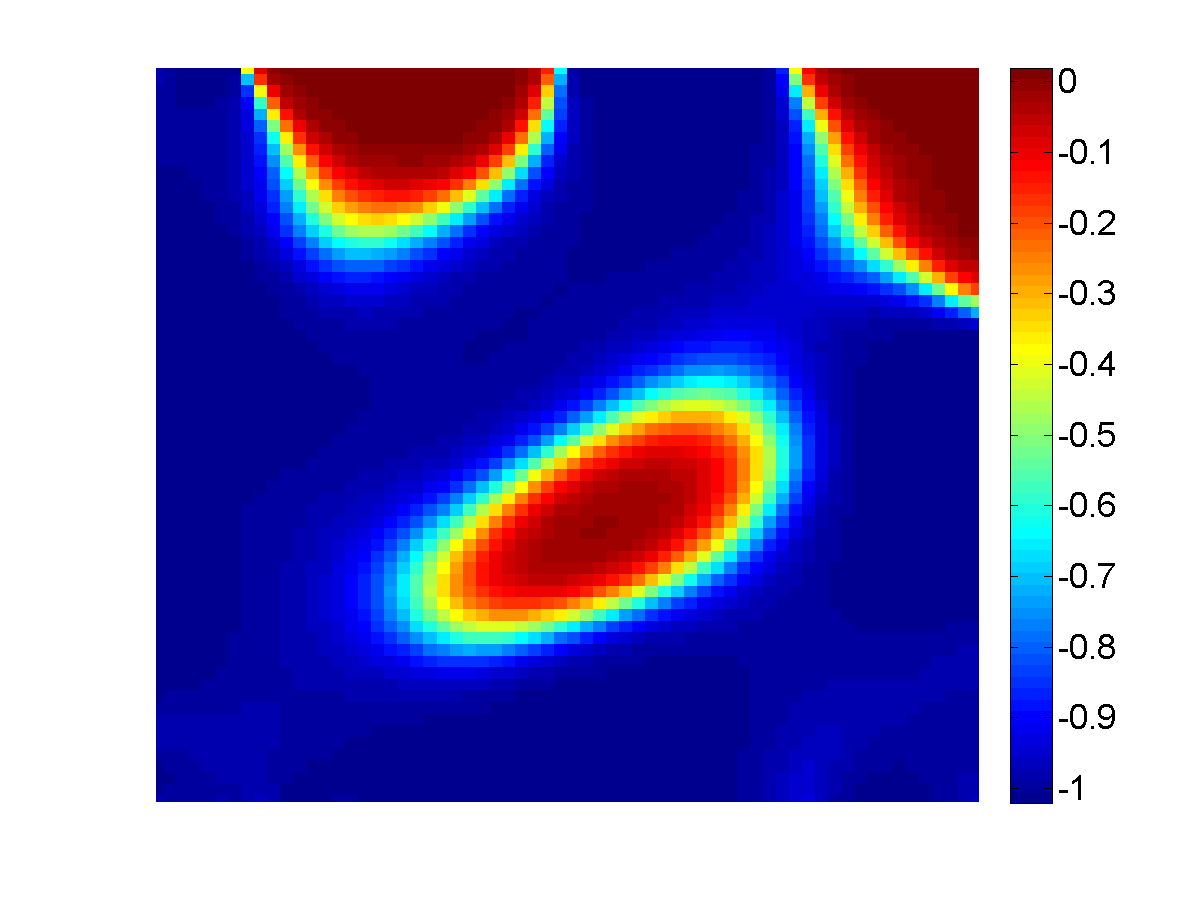}}
\subfigure[]{
\includegraphics[scale=0.15]{./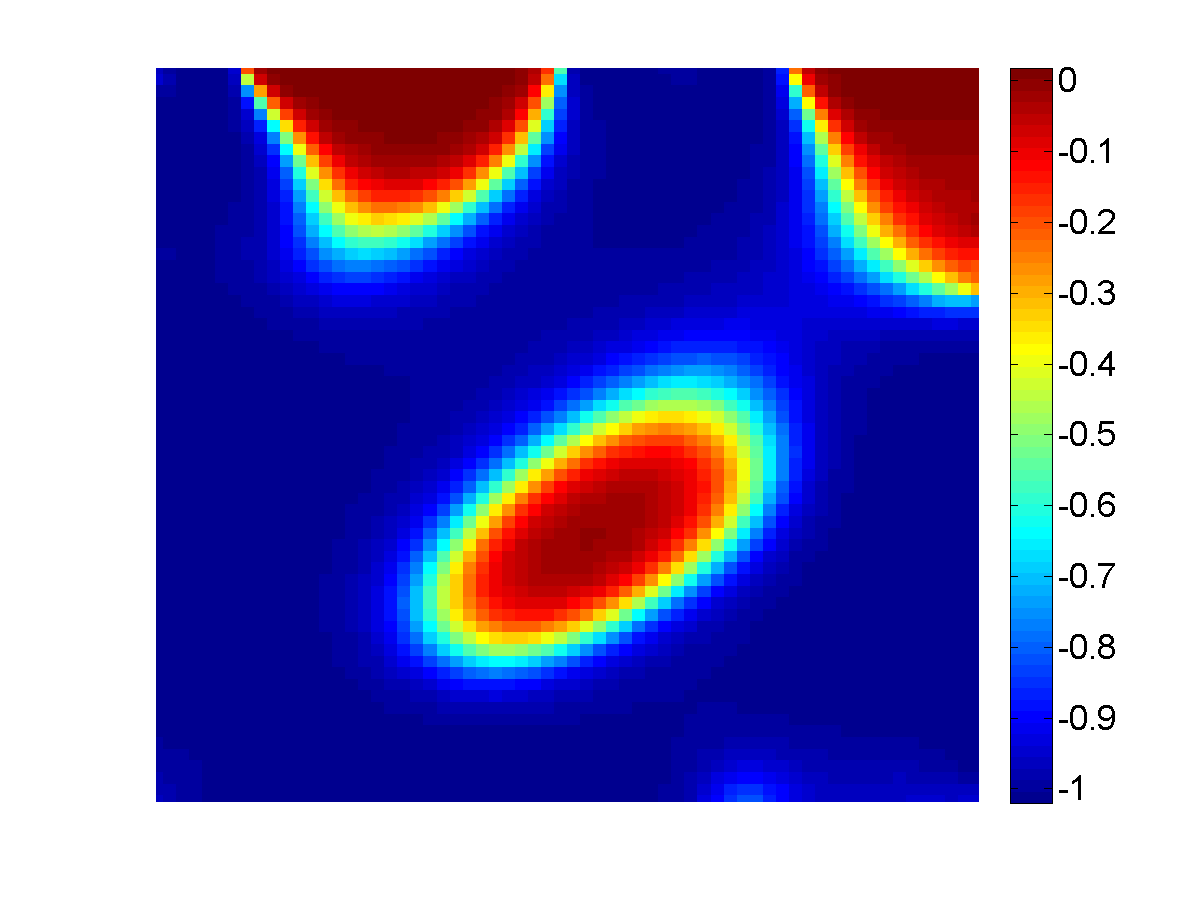}}
\caption{Example~\hyperref[numer_setup_a]{(E.1)}. Plots of log-conductivity of the recovered model using the 8 variants of Algorithm~\ref{alg1}, described in Section~\ref{sec:alg} and indicated here by (i)--(viii). The quality of reconstructions is generally comparable to that of plain vanilla with $s = 3,969$ and across variants.}
\label{fig:pde_example_recovery_2_level}
\end{figure}

For the calculations displayed here we have employed {\em dynamical regularization}~\cite{doas1,doas3}. 
In this method there is no explicit regularization term $R(\mm)$ in~\eqref{objective} 
and the regularization is done implicitly and iteratively.

The quality of reconstructions obtained by the various variants in Figure~\ref{fig:pde_example_recovery_2_level} 
is comparable to that of the ``vanilla'' with $s = 3,969$ in Figure~\ref{fig:pde_example_recovery_Vanilla_2_level}(b). 
In contrast, employing only $s = 49$ data sets
corresponding to similar experiments distributed over a coarser grid yields an inferior reconstruction
in Figure~\ref{fig:pde_example_recovery_Vanilla_2_level}(c). The cost of this latter run is $5,684$ PDE solves,
which is more expensive than our randomized algorithms for the much larger $s$.
Furthermore, comparing Figures~\ref{fig:pde_example_recovery_Vanilla_2_level}(b) and~\ref{fig:pde_example_recovery_2_level} to Figures~3 and~4 of~\cite{rodoas2}, which shows similar results for $s = 961$ data sets, we again see a relative improvement in reconstruction quality. All of this goes to show that
a large number of measurements $s$ can be crucial for better reconstructions.
Thus, it is not the case that one can dispense with a large portion of the measurements and 
still expect the same quality reconstructions. 
Hence, it is indeed useful to have algorithms such as Algorithm~\ref{alg1} that, 
while taking advantage of the entire available data, 
can efficiently carry out the computations and yet obtain credible reconstructions.

We have resisted the temptation to make comparisons between values of $\phi(\mm_{k+1})$ and $\hat \phi (\mm_{k+1})$
for various iterates. There are two major reasons for that. The first is that $\hat \phi$ values
in bounds such as~\eqref{cross_valid_hard},~\eqref{cross_valid_soft},~\eqref{uncert_check_hard},~\eqref{uncert_check_soft} and~\eqref{stop_crits} are different and are always compared
against tolerances in context
that are based on noise estimates. In addition, the sample sizes that we used for uncertainty check and stopping criteria, 
since they are given by Theorems~\ref{main_trace_theorem_lower} and~\ref{main_trace_theorem_upper}, 
already determine how far the estimated misfit is from the true misfit.
The second (and more important) reason  is that in such a highly diffusive forward problem as DC resistivity, misfit
values are typically far closer to one another than the resulting reconstructed models $\mm$ are.
A good misfit is merely a necessary condition, which can fall significantly short of being sufficient,
for a good reconstruction~\cite{haasol,rodoas2}. 

\subsubsection{Example~\hyperref[numer_setup_b]{(E.2)}}

Here we have imposed prior knowledge on the ``\textit{discontinuous}'' model in the form of 
total variation (TV) regularization~\cite{doasha,chta1,borsic2007total}. 
Specifically, $R(\mm)$ in~\eqref{objective} is the discretization of the TV functional 
$\int_{\Omega} | \grad m (\xx) |$. For each recovery, the regularization parameter, $\alpha$, 
has been chosen by trial and error within the range $[10^{-6},10^{-3}]$ to visually yield the best quality recovery. 

\begin{table}[!ht]
\begin{center}
\begin{tabular}{|ccccccccc|}
\hline 
Vanilla & (i) & (ii) & (iii) & (iv) & (v) & (vi) & (vii) & (viii)
\\ \hline 
476,280 & 5,631 & 5,057 & 5,011 & 3,990 & 6,364 & 4,618 & 4,344 & 4,195 \\ \hline
\end{tabular}
\end{center}
\caption{Example~\hyperref[numer_setup_b]{(E.2)}. Work in terms of number of PDE solves for all variants of Algorithm~\ref{alg1}, 
described in Section~\ref{sec:alg} and indicated here by (i)--(viii). 
The ``vanilla'' count is also given, as a reference. \label{table01_3level}}
\end{table}

Table~\ref{table01_3level} and Figures~\ref{fig:pde_example_recovery_Vanilla_3_level} 
and~\ref{fig:pde_example_recovery_3_level} tell a similar story as in Example~\hyperref[numer_setup_a]{(E.1)}. 
The quality of reconstructions with $s = 3,969$ by the various variants, displayed in 
Figure~\ref{fig:pde_example_recovery_3_level}, 
is comparable to that of the ``vanilla'' version  in Figure~\ref{fig:pde_example_recovery_Vanilla_3_level}(b), 
yet is obtained at only at a fraction (about 1\%) of the cost. 
The ``vanilla'' solution for $s=49$ displayed in Figure~\ref{fig:pde_example_recovery_Vanilla_3_level}(c), 
costs $5,978$ PDE solves, which again is 
a higher cost for an inferior reconstruction compared to our Algorithm~\ref{alg1}. 

\begin{figure}[htb]
\centering
\subfigure[]{\includegraphics[scale=0.15]{./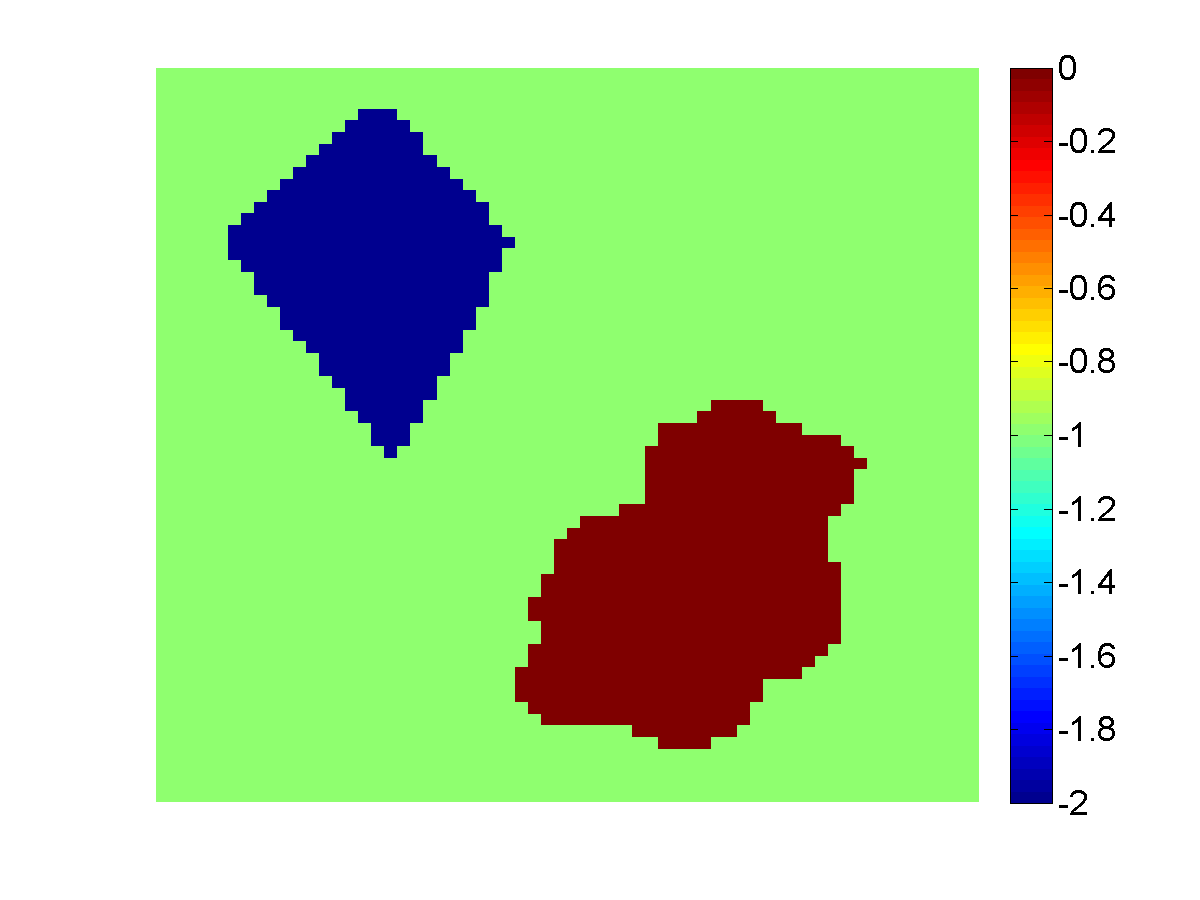}}
\subfigure[]{\includegraphics[scale=0.15]{./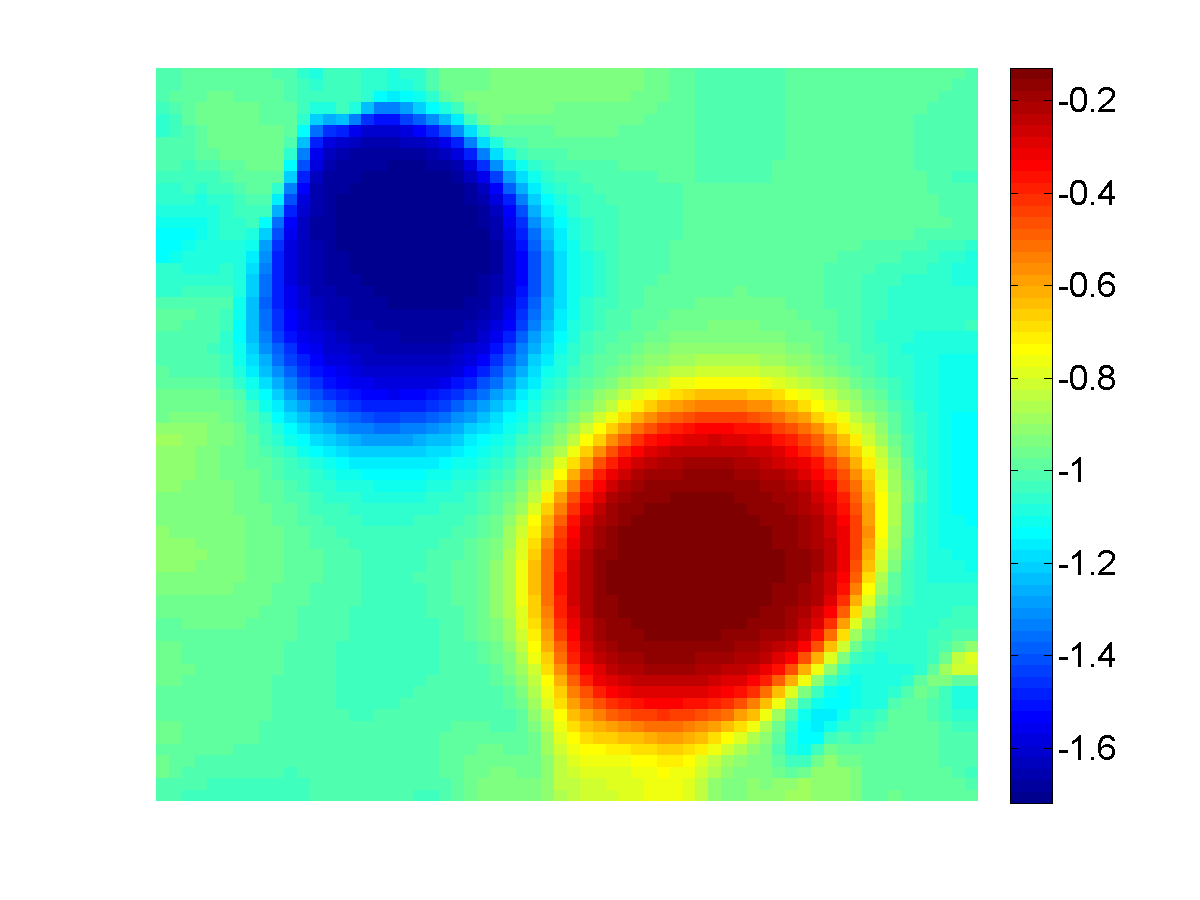}}
\subfigure[]{\includegraphics[scale=0.15]{./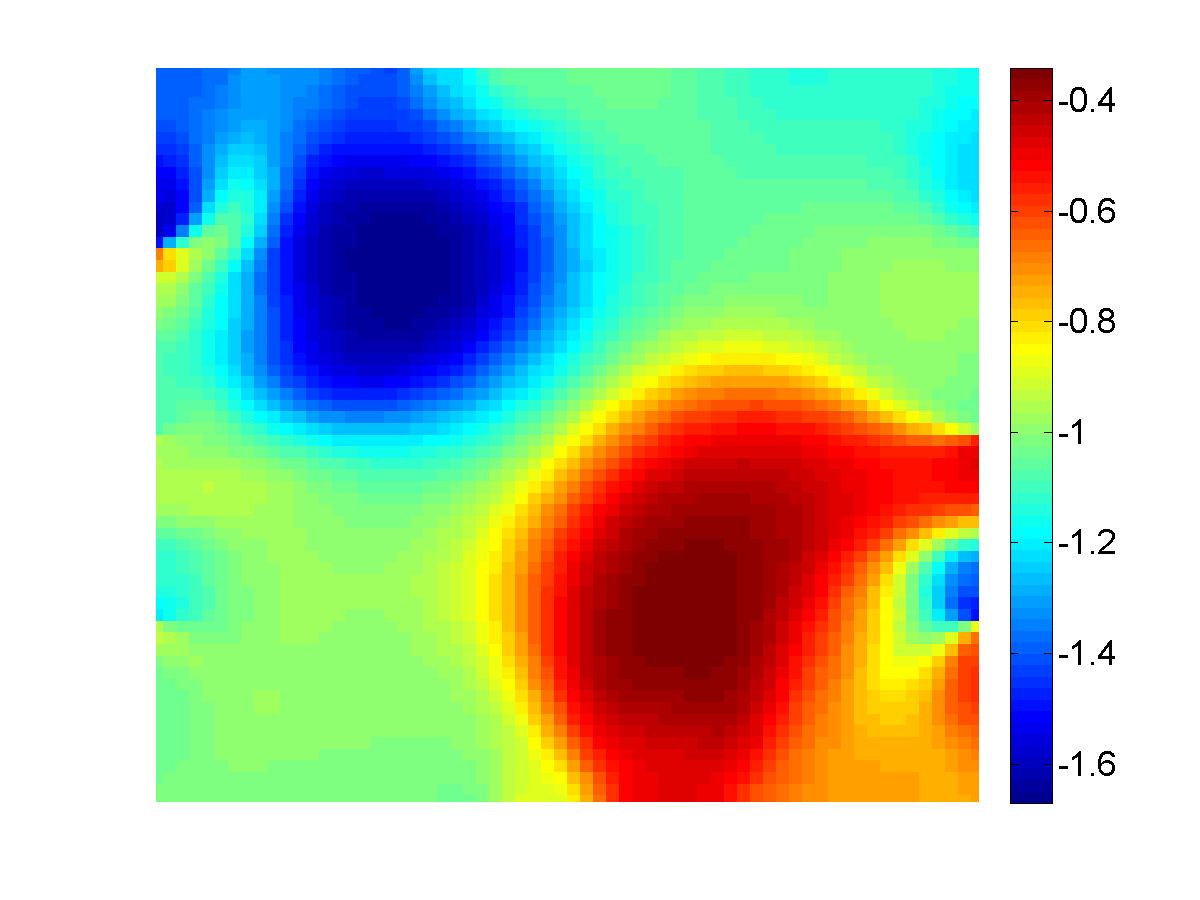}}
\caption{Example~\hyperref[numer_setup_b]{(E.2)}. Plots of log-conductivity: (a) True model; (b) Vanilla recovery with $s = 3,969$; (c) Vanilla recovery with $s=49$. 
The vanilla recovery using only $49$ measurement sets is clearly inferior, 
showing that a large number of measurement sets can be crucial for better reconstructions.}
\label{fig:pde_example_recovery_Vanilla_3_level}		
\renewcommand{\thesubfigure}{(\roman{subfigure})}
\subfigure[]{
\includegraphics[scale=0.15]{./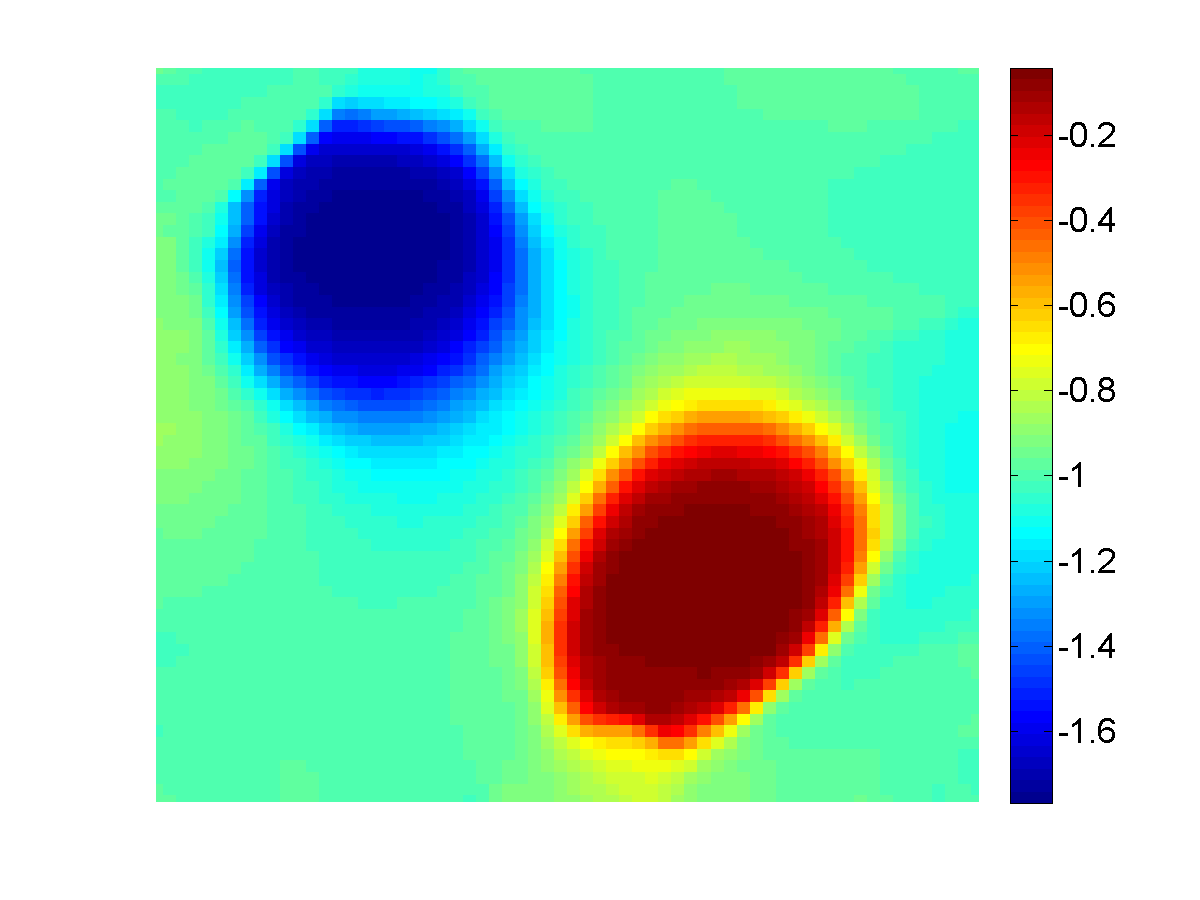}}
\subfigure[]{
\includegraphics[scale=0.15]{./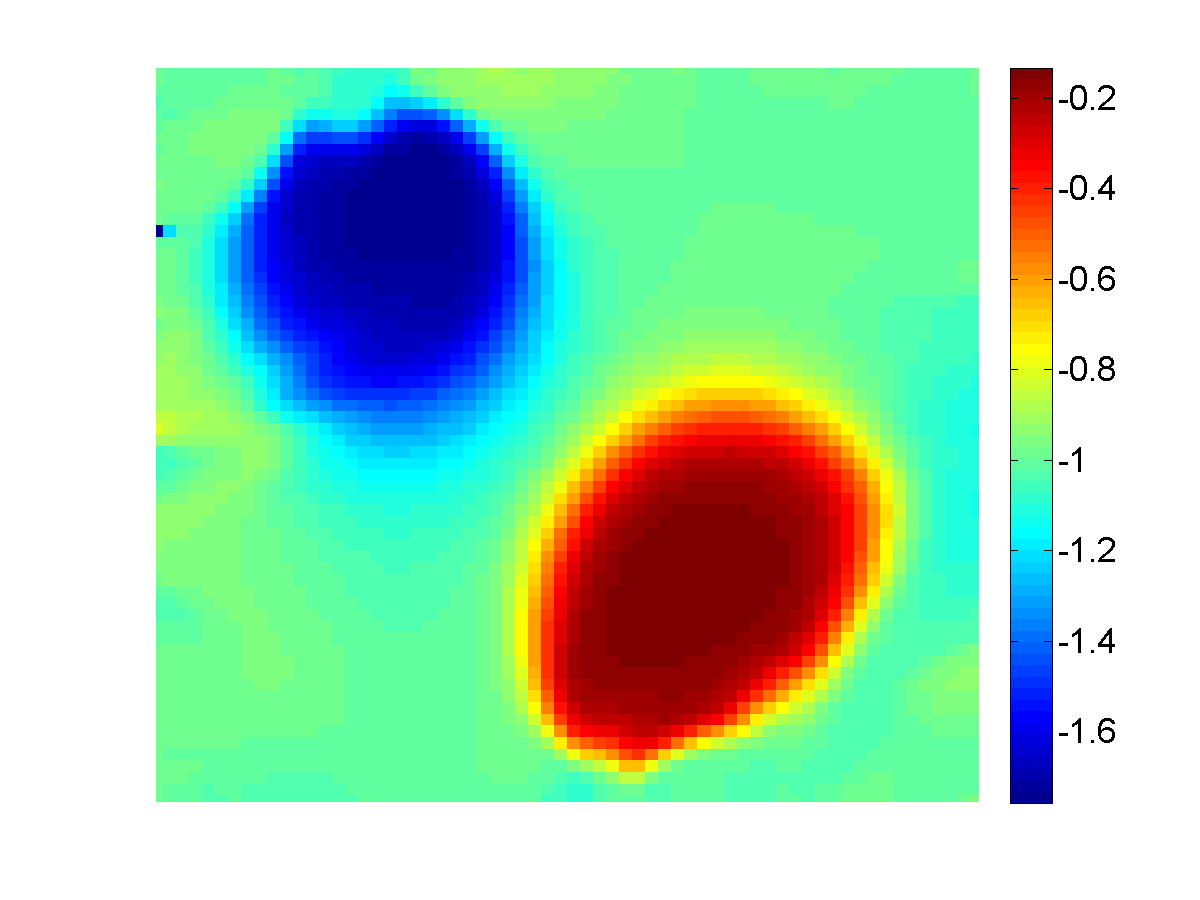}}
\subfigure[]{
\includegraphics[scale=0.15]{./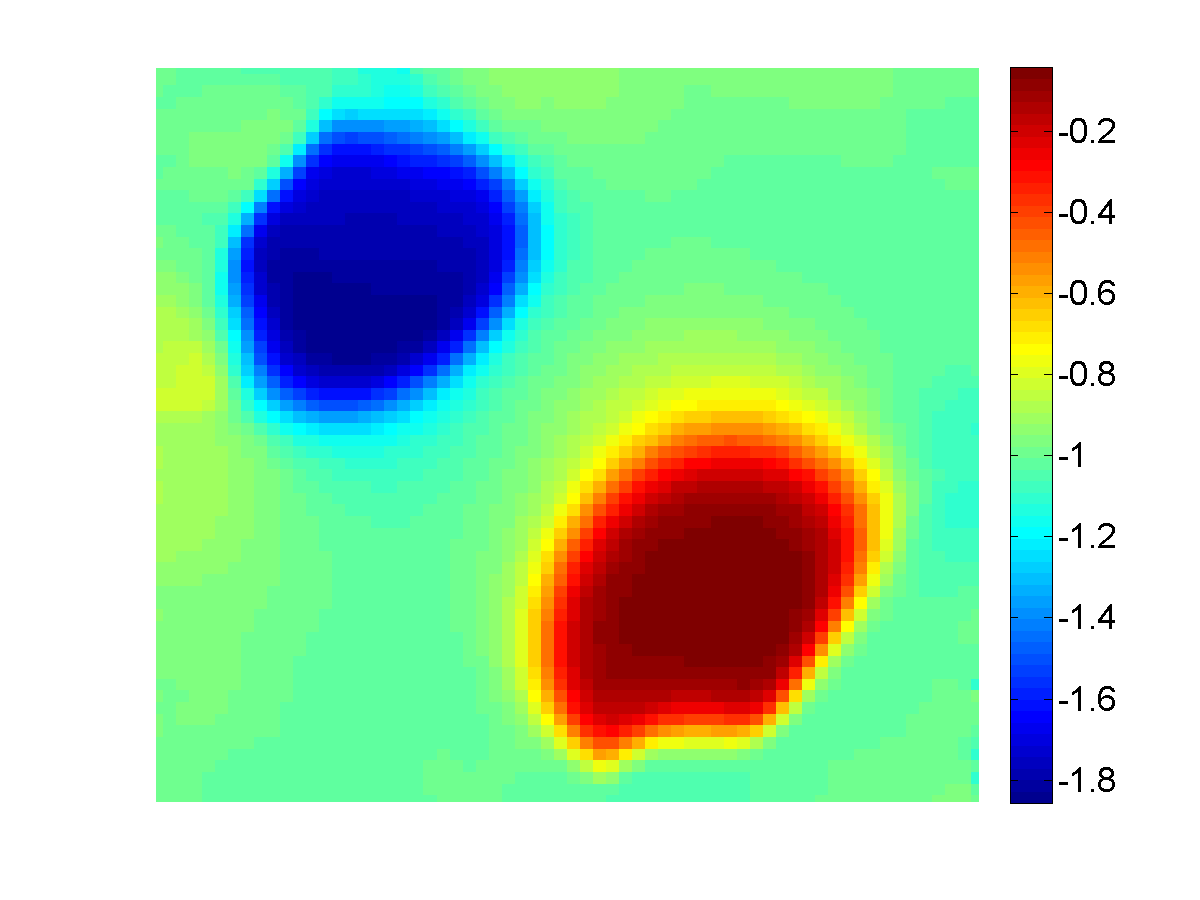}}
\subfigure[]{
\includegraphics[scale=0.15]{./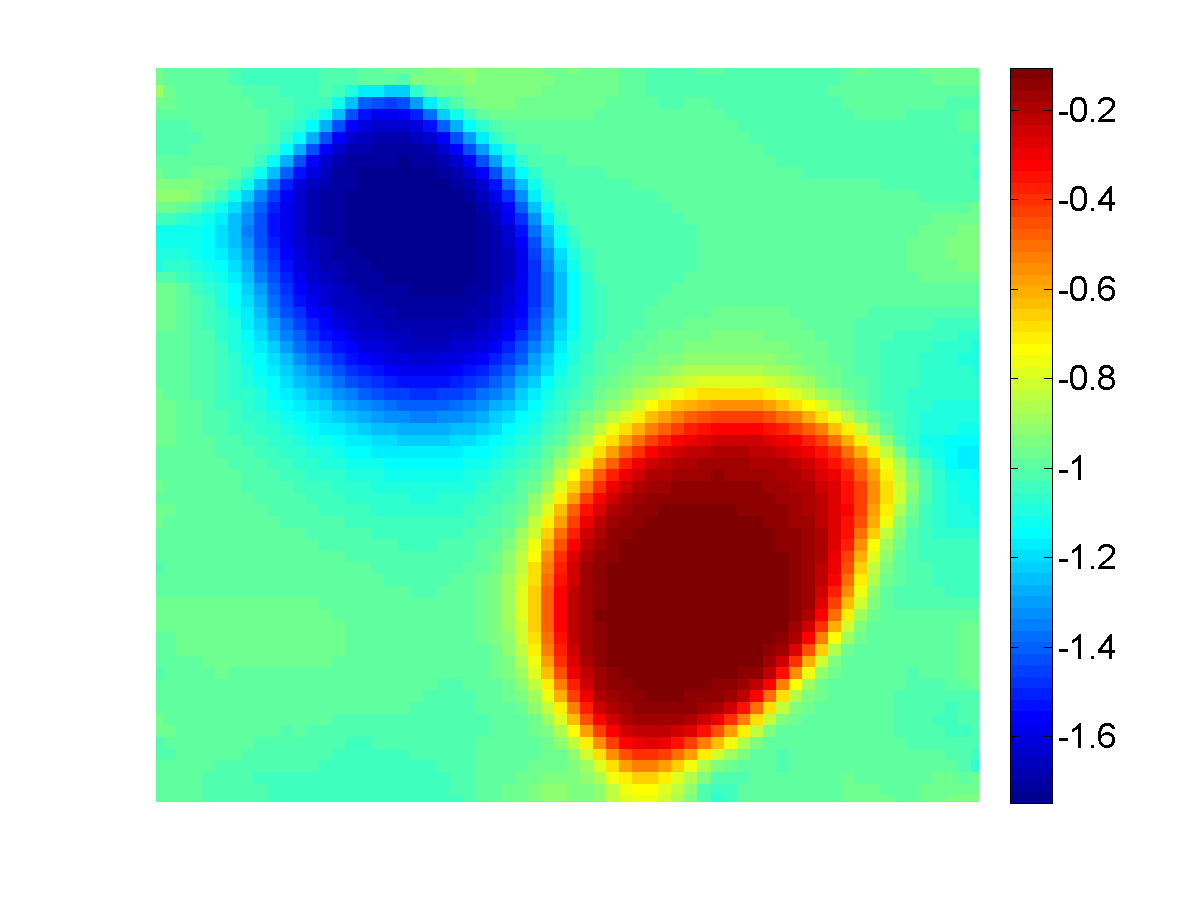}}
\subfigure[]{
\includegraphics[scale=0.15]{./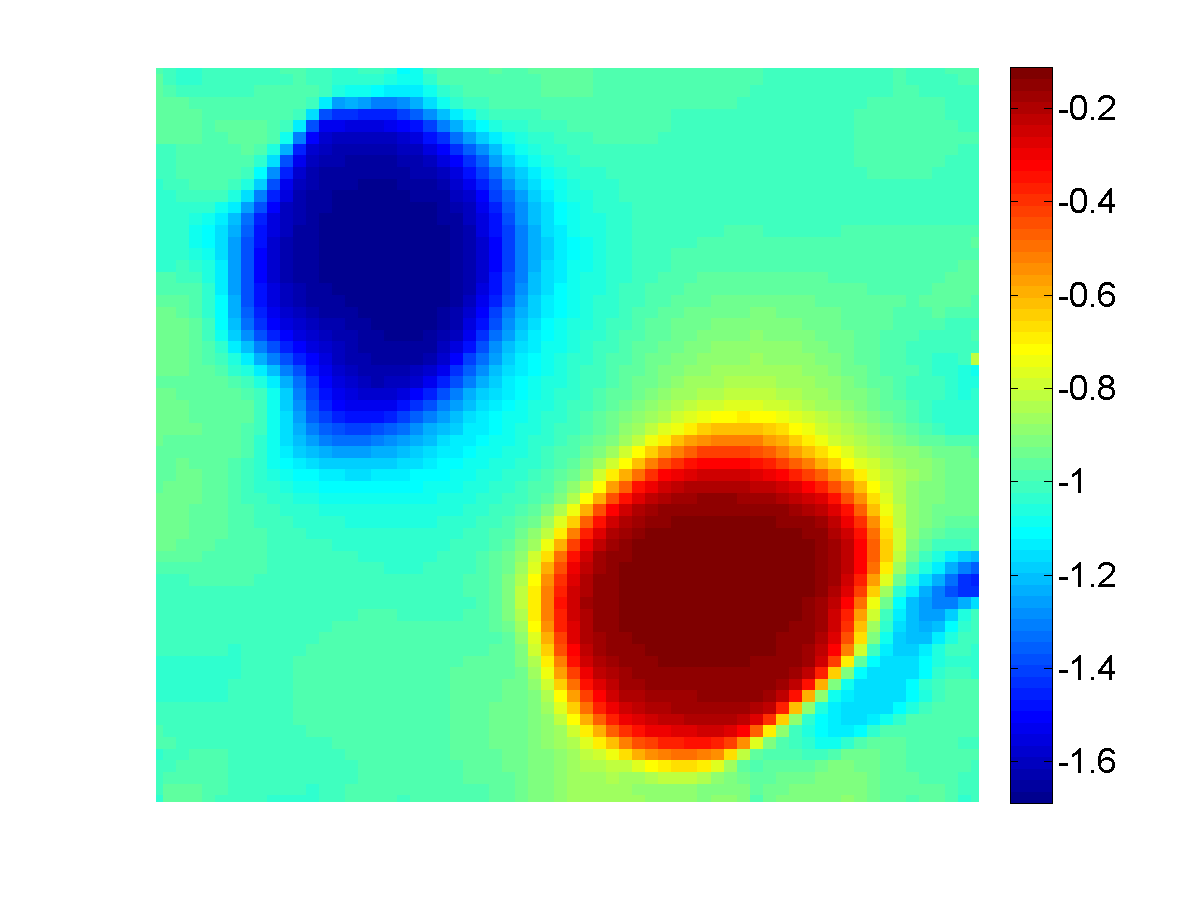}}
\subfigure[]{
\includegraphics[scale=0.15]{./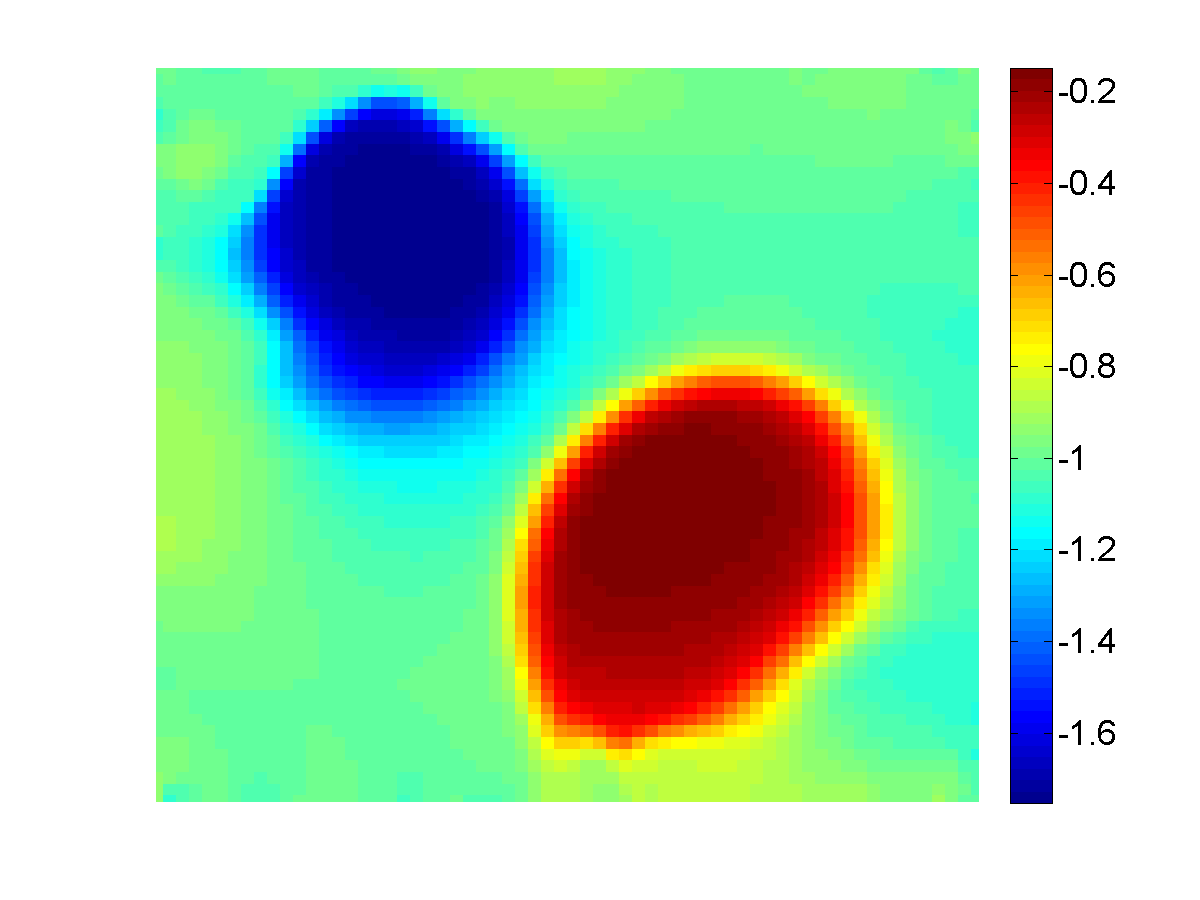}}
\subfigure[]{
\includegraphics[scale=0.15]{./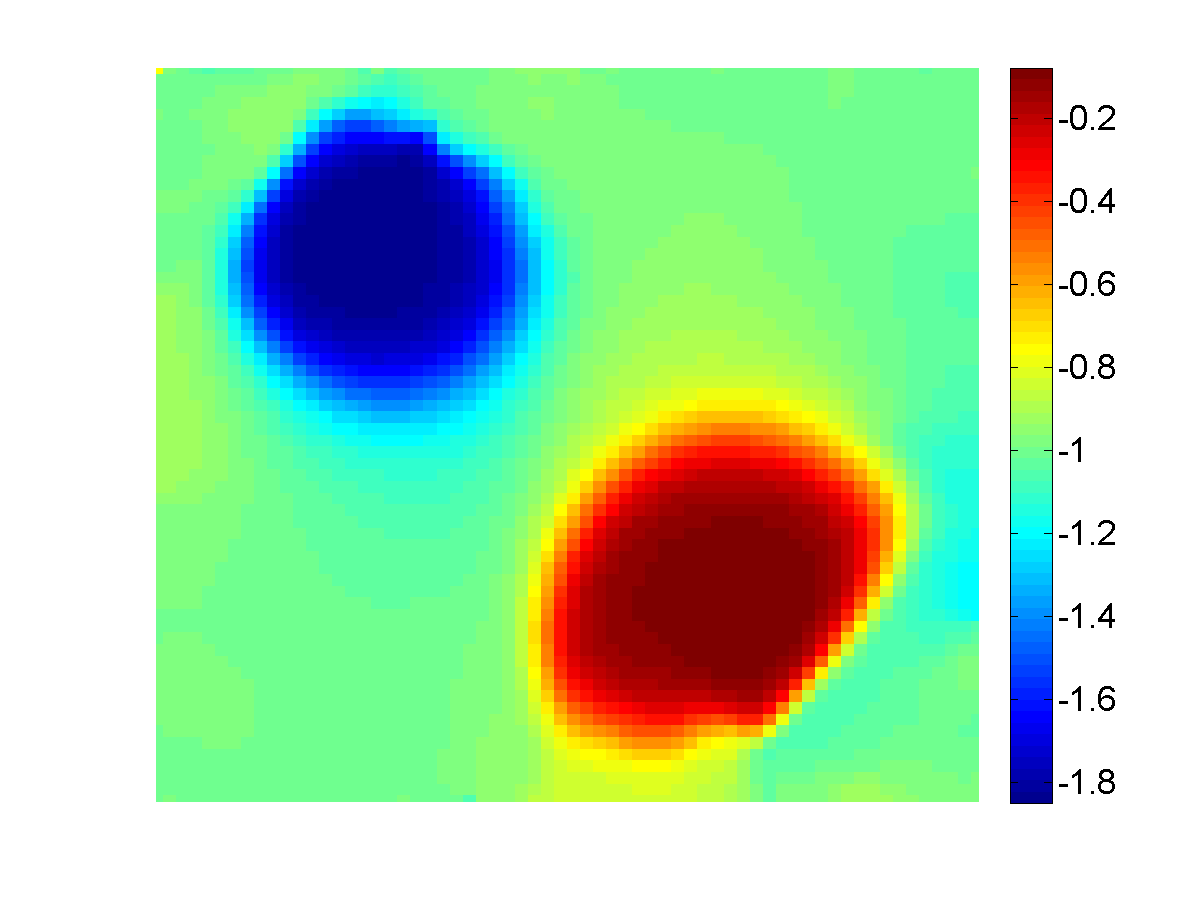}}
\subfigure[]{
\includegraphics[scale=0.15]{./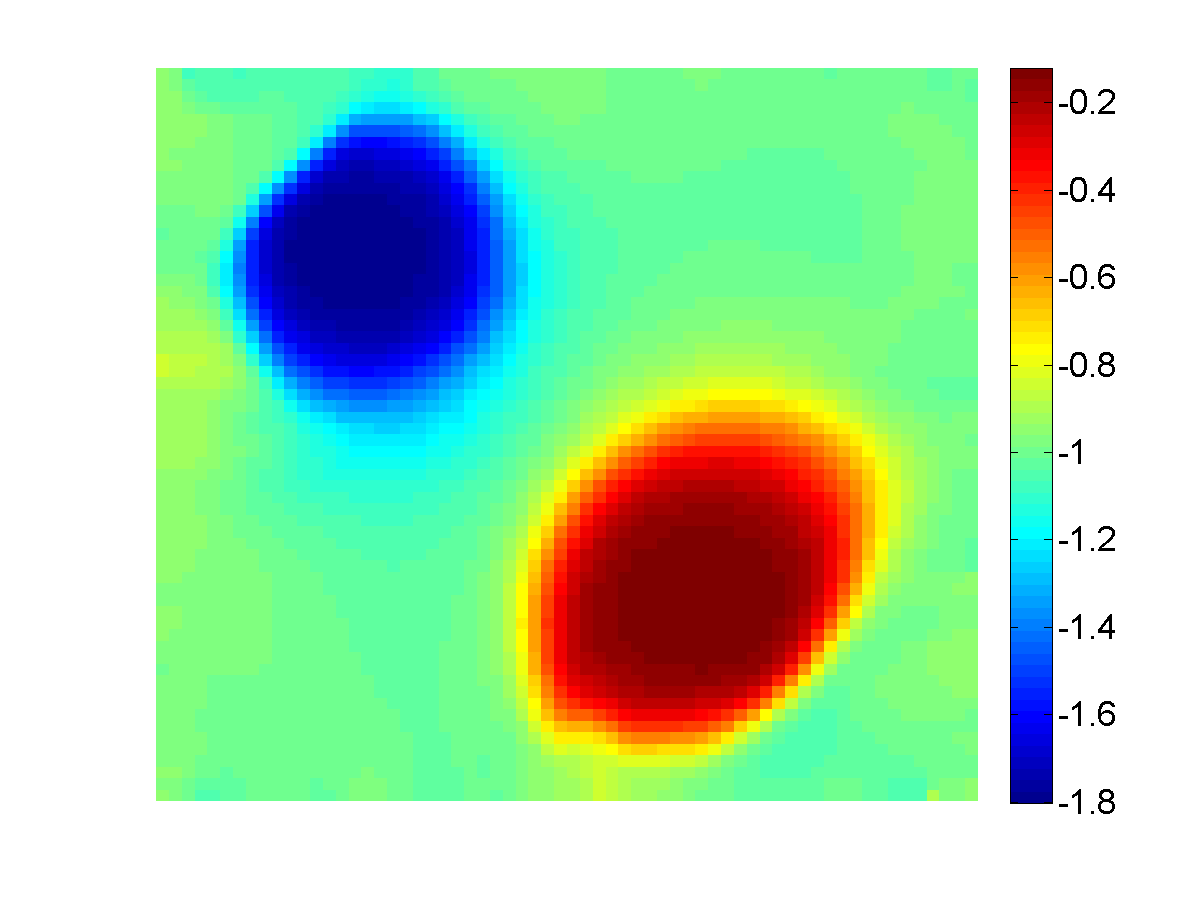}}
\caption{Example~\hyperref[numer_setup_b]{(E.2)}. Plots of log-conductivity of the recovered model using the 8 variants of Algorithm~\ref{alg1}, described in Section~\ref{sec:alg} and indicated here by (i)--(viii).
The quality of reconstructions is generally comparable to each other and that of plain vanilla with $s = 3,969$. }
\label{fig:pde_example_recovery_3_level}
\end{figure}

It is clear from Tables~\ref{table01_2level} and~\ref{table01_3level} that for most of these examples, 
variants (i)--(iv) which use the more aggressive cross validation~\eqref{cross_valid_hard} 
are at least as efficient as their respective counterparts, namely,
variants (v)--(viii) which use~\eqref{cross_valid_soft}. 
This suggests that, sometimes, a more aggressive sample size increase strategy may be a better option; 
see also the numerical examples in~\cite{rodoas1}. 
Notice that for all variants, the entire cost of the algorithm is comparable to one single evaluation 
of the misfit function $\phi(\mm)$ using the full data set! 


\begin{figure}[htb]
\centering
\mbox{
\includegraphics[scale=0.55]{./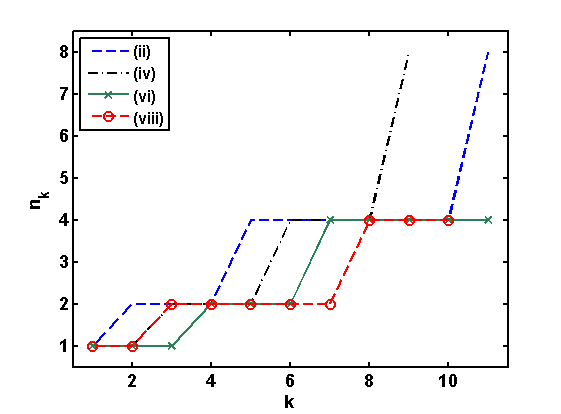}
}
\caption{Example~\hyperref[numer_setup_b]{(E.2)}. Growth of the fitting sample size, $n_{k}$, as a function of the iteration $k$,
upon using cross validation strategies~\eqref{cross_valid_hard} and~\eqref{cross_valid_soft}. The graph shows the fitting sample size growth for variants (ii) and (vi) of Algorithm~\ref{alg1}, as well as their counterparts, namely, variants (vi) and (viii). Observe that for variants (ii) and (iv) where~\eqref{cross_valid_hard} is used, 
the fitting sample size grows at a more aggressive rate than for variants (vi) and (viii) where~\eqref{cross_valid_soft} is used.}
\label{fig:pde_example_sample_size}
\end{figure}

\section{Conclusions}
\label{sec:conc}
In the present article we have proved tight necessary and sufficient conditions 
for the sample size, $n$, required to reach, with a probability of at least $1 - \delta$, 
(one-sided) approximations for $tr(A)$ to within a relative tolerance $\veps$. 
All of the sufficient conditions are computable in practice and do not assume any a priori knowledge about the matrix. 
If the rank of the matrix is known then the necessary bounds can also be computed in practice. 

Subsequently, using these conditions, we have presented eight variants of a general purpose algorithm for solving 
an important class of large scale non-linear least squares problems. 
These algorithms can be viewed as an extended version of those in~\cite{rodoas1, rodoas2}, 
where the uncertainty in most of the stochastic steps is quantified. 
Such uncertainty quantification allows one to have better control over the behavior of the algorithm 
and have more confidence in the recovered solution. 
The resulting algorithm is presented in Section~\ref{sec:alg}. 

Furthermore, we have demonstrated the performance of our algorithm using an important class of
problems which arise often in practice, namely, PDE inverse problems with many measurements. 
By examining our algorithm in the context of the DC resistivity problem 
as an instance of such class of problems, 
we have shown that Algorithm~\ref{alg1} can recover solutions with remarkable efficiency. 
This efficiency is comparable to similar heuristic algorithms proposed in~\cite{rodoas1, rodoas2}. 
The added advantage here is that with the uncertainty being quantified, 
the user can have more confidence in the approximate solution obtained by our algorithm. 

Tables~\ref{table01_2level} and~\ref{table01_3level} show the amount of work (in PDE solves) of the 8 variants of our algorithm.
Compared to a similar algorithm which uses the entire data set, an efficiency improvement by two orders of magnitude
is observed.
For most of the examples considered, the same tables also show that the more aggressive cross validation 
strategy~\eqref{cross_valid_hard} is, at least, as efficient as the more relaxed strategy~\eqref{cross_valid_soft}. 
A thorough comparison of the behavior of these cross validation strategies (and all of the variants, in general) 
on different examples and model problems is left for future work.


 

\noindent{\bf Acknowledgment}
 We thank our anonymous referees for several valuable comments which have helped to improve the text. The first author thanks Prof.\ Yaming Yu for referring him to~\cite{szba}, which resulted in the collaboration among the authors of the present paper.

\appendix

\section{Numerical experiments setup}
\label{numerical_setup}

The experimental setting we use in Section~\ref{sec:pde_manys} is as follows: 
for each experiment $i$, there is a positive unit point source at $\xx^{i}_1$ and a negative sink 
at $\xx^{i}_{2}$, where $\xx^{i}_{1}$ and $\xx^{i}_{2}$ denote two locations on the 
boundary $\partial \Omega$. 
Hence in~\eqref{2.1a} we must consider sources of the form 
$q_i(\xx) = \delta(\xx-\xx_{1}^i) - \delta(\xx-\xx_{2}^i)$, 
i.e., a difference of two $\delta$-functions, and $\qq_{i}$ is the discretization of $q_{i}$ over the grid.

For our experiments, when we place a source on the left boundary, 
we place the corresponding sink on the right boundary in every possible combination. 
Hence, having $p$ locations on the left boundary for the source would result in $s = p^2$ experiments.
The receivers are located at the top and bottom boundaries. No source or receiver is placed at the corners.

We then generate data $\dd_i$ by using a chosen true model (or ground truth)
and a source-receiver configuration as described above. 
This is followed by peppering these values with $2\%$ additive Gaussian noise to create the data $\dd_i$
used in our experiments.
Specifically, for an additive noise of $2\%$, denoting the ``clean data'' $l \times s$ matrix by $D^*$,
we reshape this matrix into a vector $\dd^*$ of length $sl$, calculate the standard deviation 
${\tt \sigma} = .02\| \dd^* \|/\sqrt{sl}$, and define $D = D^* + {\tt \sigma * randn(l,s)}$
using {\sc Matlab}'s random generator function {\tt randn}. Following the celebrated Morozov {\em discrepancy principle}~\cite{vogelbook,ehn1,morozov,kaso}, the stopping tolerance is set to be $\rho = \tau \sigma^2 sl$. As in~\cite{rodoas1}, we choose $\tau = 1.2$.

For all numerical experiments, in order to avoid committing ``inverse crime'', the ``true field'' is calculated on a grid that is twice as fine as the 
one used to reconstruct the model. For the 2D examples, the reconstruction is done on a uniform grid of size $64^2$
with $s = 3,969$ experiments in the setup described above.

As for an iterative method to decrease the value of the objective function, we employ variants of stabilized GN; see~\cite[Appendix A]{rodoas1} for more details. At each iteration of such method, an update direction needs to be calculated. Usually another iterative scheme is used to calculate the update. We employ preconditioned conjugate gradient (PCG) as our inner iterative solver. The PCG iteration limit is set to $20$, and the PCG tolerance was chosen to be $10^{-3}$. We again refer to~\cite[Appendix A]{rodoas1} for more details. The initial guess for GN iterations is $\mm_{0} = {\bf 0}$.  

For the transfer function $\psi$ described in Section~\ref{sec:pde_manys}, 
we use the formulation~\cite[Eqn.\ (6.3)]{rodoas1} with 
$\mu_{\max} = 1.2 \max \mu(\xx)$, and $\mu_{\min} = .83 \min \mu(\xx)$.

\section{Extremal probabilities of linear combinations of gamma random variables}
\label{sec:appendix}

In this appendix we prove Theorems~\ref{monotonicity_gamma_theorem} and~\ref{extremal_prob_thm}.
Such results were obtained in~\cite{szba} for the special case where the $X_{i}$'s are chi-squared r.v's 
of degree 1 (corresponding to $\alpha = \beta = 1/2$). 
Here we extend those results to 
arbitrary gamma random variables, including
chi-squared of arbitrary degree, exponential, Erlang, etc.

In what follows,
for a gamma r.v $X \sim Gamma(\alpha,\beta)$, we use the notation $f_{X}$ for its probability density function (PDF) and
$F_{X}$ for its cumulative distribution function (CDF).
The objective in the proof of Theorem~\ref{extremal_prob_thm} is to find the extrema (with respect to $\bm{\lambda} \in \Theta$) of the CDF of r.v $\sum_{i=1}^{n} \lambda_{i} X_{i}$. 
This is mainly achieved by perturbation arguments, 
employing a key identity which is derived using Laplace transforms. 
Using our perturbation arguments with this identity and employing Lemma~\ref{lemma_1*}, 
we obtain that at any extremum, we must have either 
$\lambda_{1}, \lambda_{2} > 0$ and $\lambda_{3} = \cdots = \lambda_{n} = 0$ or for some 
$i \leq n$ we must get $\lambda_{1} = \cdots = \lambda_{i} > 0$ and $\lambda_{i+1} = \cdots = \lambda_{n} = 0.$ (Note that this latter case covers the ``corners'' as well.). 
In the former case, Lemma~\ref{lemma_2*} is used to distinguish between the minima and maxima for different values of $x$. 
These results along with Theorem~\ref{monotonicity_gamma_theorem} are then used to prove Theorem~\ref{extremal_prob_thm}. 

Three lemmas are used in the proofs of our two theorems.
	Lemma~\ref{lemma_A*} describes some properties of the 
	PDF of non-negative linear combinations of arbitrary gamma r.v's, such as analyticity and vanishing derivatives at zero. 
	Lemma~\ref{lemma_1*} describes the monotonicity property of the mode of the PDF of non-negative linear combinations 
	of a \textit{particular} set of gamma r.v's, which is useful for the proof of Theorem~\ref{extremal_prob_thm}.
	Lemma~\ref{lemma_2*} gives some properties regarding the mode of the PDF of convex combinations of two \textit{particular} gamma r.v's, which is used in proving Theorem~\ref{monotonicity_gamma_theorem} and Theorem~\ref{extremal_prob_thm}.


\subsection{Lemmas}
\label{additional_Lemmas}
We next state and prove the lemmas summarized above.

\begin{lemma}[{Generalization of~\cite[Lemma A]{szba}}]
Let $X_{i} \sim	Gamma(\alpha_{i},\beta_{i}),\;  i=1,2,\ldots,n,$ be independent r.v's, where $\alpha_{i}, \beta_{i} > 0 \; \forall i$. Define $Y_{n} \defeq \sum_{i=1}^{n} \lambda_{i} X_{i}$ for $\lambda_{i} > 0$, $\forall i$ and $\rho_{j} \defeq \sum_{i=1}^{j} \alpha_{i}$.
Then for the PDF of $Y_{n}$, $f_{Y_{n}}$, we have
\begin{enumerate}[(i)]
	\item $f_{Y_{n}} > 0$, $\forall x > 0$,
	\item $f_{Y_{n}}$ is analytic on $\mathbb{R}^{+} = \{x | x > 0\}$,
	\item $f_{Y_{n}}^{(k)}(0) = 0$, if $0 \leq k < \rho_{n} - 1$, where $f_{Y_{n}}^{(k)}$ denotes the $k^{th}$ derivative of $f_{Y_{n}}$.
\end{enumerate} 
\label{lemma_A*}
\end{lemma}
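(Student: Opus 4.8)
The plan is to reduce the statement to one about convolutions of standard gamma densities and then to extract a single structural representation of $f_{Y_n}$ from which both (ii) and (iii) drop out. First I would absorb the weights into the rate parameters: since $\lambda_i>0$, we have $\lambda_i X_i \sim Gamma(\alpha_i,\beta_i/\lambda_i)$, so $Y_n$ is a sum of $n$ independent gamma variables and its density is the $n$-fold convolution $f_{Y_n}=f_{\lambda_1 X_1}\ast\cdots\ast f_{\lambda_n X_n}$ of functions of the form $c_i\,x^{\alpha_i-1}e^{-\gamma_i x}\mathbf{1}_{\{x>0\}}$ with $\gamma_i=\beta_i/\lambda_i$ and $c_i>0$. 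For part (i) I would induct on $n$ directly from the convolution: each factor is strictly positive on $(0,\infty)$ and supported on $[0,\infty)$, so if $f_{Y_{n-1}}>0$ on $(0,\infty)$, then for $x>0$ the integrand in $f_{Y_n}(x)=\int_0^x f_{Y_{n-1}}(t)\,f_{\lambda_n X_n}(x-t)\,dt$ is strictly positive and continuous on the open interval $(0,x)$, forcing the integral to be positive.

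The heart of the argument is the claim that $f_{Y_n}(x)=x^{\rho_n-1}H_n(x)$, where $H_n$ extends to an entire function (in particular analytic on a neighborhood of $[0,\infty)$). I would prove this by induction on $n$. The base case is $f_{\lambda_1 X_1}(x)=x^{\alpha_1-1}H_1(x)$ with $H_1(x)=c_1 e^{-\gamma_1 x}$ and $\rho_1=\alpha_1$. For the inductive step, I convolve the representation for $f_{Y_j}$ with the $(j{+}1)$-st density and substitute $t=xu$ in $\int_0^x t^{\rho_j-1}H_j(t)\,(x-t)^{\alpha_{j+1}-1}e^{-\gamma_{j+1}(x-t)}\,dt$; this pulls out a factor $x^{\rho_j+\alpha_{j+1}-1}=x^{\rho_{j+1}-1}$ and leaves
\[
H_{j+1}(x)=c_{j+1}\int_0^1 u^{\rho_j-1}(1-u)^{\alpha_{j+1}-1}\,H_j(xu)\,e^{-\gamma_{j+1}x(1-u)}\,du .
\]
Because $\rho_j,\alpha_{j+1}>0$, the weight $u^{\rho_j-1}(1-u)^{\alpha_{j+1}-1}$ is integrable on $(0,1)$, and for each fixed $u$ the integrand is entire in $x$; a standard Morera (or differentiation-under-the-integral) argument, using a locally uniform integrable bound on compact $x$-sets, then shows $H_{j+1}$ is entire, completing the induction.

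Given this representation, part (ii) is immediate, since $x^{\rho_n-1}$ is analytic on $(0,\infty)$ and $H_n$ is entire. For part (iii), I would expand $H_n$ in its convergent Taylor series about the origin to obtain $f_{Y_n}(x)=\sum_{m\ge 0} b_m\, x^{\rho_n-1+m}$; whenever $0\le k<\rho_n-1$ each exponent $\rho_n-1+m$ strictly exceeds $k$, and a power $x^p\mathbf{1}_{\{x\ge 0\}}$ with $p>k$ is $C^k$ with $k$-th derivative vanishing at $0$, so term-by-term differentiation gives $f_{Y_n}^{(k)}(0)=0$. I expect the main obstacle to be justifying the analyticity of $H_{j+1}$, namely interchanging differentiation in $x$ with the parameter integral despite the endpoint singularities of the weight---this is precisely where $\rho_j,\alpha_{j+1}>0$ is essential---together with the bookkeeping needed because $\rho_n-1$ is generally non-integer, so that $f_{Y_n}^{(k)}(0)$ in (iii) must be read as the one-sided derivative of the density extended by zero to $x<0$.
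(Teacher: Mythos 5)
Your proposal is correct and takes essentially the same route as the paper: both proofs induct on $n$, rescale the convolution variable by $x$ (your substitution $t = xu$ is the paper's $y = x\cos^{2}\theta$ in disguise, via $u=\cos^{2}\theta$) to extract the factor $x^{\rho_{n}-1}$, and then read off all three claims from the representation of $f_{Y_{n}}$ as $x^{\rho_{n}-1}$ times a positive, analytic factor. The only cosmetic difference is that the paper unfolds the recursion into a single explicit integral over the cube $(0,\pi/2)^{n-1}$, whereas you keep a nested one-dimensional recursion $H_{j+1}$ in terms of $H_{j}$ and justify analyticity by Morera; the substance is identical.
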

\begin{proof}
The proof is done by induction on $n$. For $n=2$ we have
\begin{eqnarray*}
f_{Y_{2}}(x) &=& \int_{0}^{\infty} f_{\lambda_{1} X_{1}}(y) f_{\lambda_{2} X_{2}}(x-y) dy \\
&=& \frac{(\beta_{1}/\lambda_{1})^{\alpha_{1}} (\beta_{2}/\lambda_{2})^{\alpha_{2}} }{\Gamma(\alpha_{1})\Gamma(\alpha_{2})} \int_{0}^{x} y^{\alpha_{1}-1} (x-y)^{\alpha_{2}-1} e^{- \frac{\beta_{1}y}{\lambda_{1}} - \frac{\beta_{2} (x-y) }{\lambda_{2}}} dy.
\end{eqnarray*}
Now the change of variable $y \rightarrow x \cos^{2}\theta_{1}$ would yield
\begin{equation*}
f_{Y_{2}}(x) = 2 \frac{(\beta_{1}/\lambda_{1})^{\alpha_{1}} (\beta_{2}/\lambda_{2})^{\alpha_{2}} }{\Gamma(\alpha_{1})\Gamma(\alpha_{2})} x^{(\alpha_{1}+\alpha_{2}-1)} \int_{0}^{\frac{\pi}{2}} (\cos\theta_{1})^{2\alpha_{1}-1} (\sin\theta_{1})^{2\alpha_{2}-1}e^{- x (\frac{\beta_{1} \cos^{2}\theta_{1}}{\lambda_{1}} + \frac{\beta_{2} \sin^{2}\theta_{1} }{\lambda_{2}})} d\theta_{1}.
\end{equation*}
By induction on $n$, one can show that for arbitrary $n \geq 2$
\begin{subequations}
\begin{equation}
f_{Y_{n}}(x) = 2^{n-1} \left(\prod_{i=1}^{n} \frac{(\beta_{i}/\lambda_{i})^{\alpha_{i}}}{\Gamma(\alpha_{i})}\right) x^{\rho_{n} - 1} \int_{D^{n-1}}  P_{n}( \Theta_{n-1}) Q_{n}(\Theta_{n-1}) e^{-x R_{n}(\Theta_{n-1})} \dd \Theta_{n-1},
\label{induct_pdf}
\end{equation}
where 
\begin{equation}
P_{n}(\Theta_{n-1}) = \prod_{j=1}^{n-1} (\cos\theta_{j})^{2\rho_{j}-1}, \quad Q_{n}(\Theta_{n-1}) = \prod_{j=1}^{n-1} (\sin\theta_{j})^{2\alpha_{j+1}-1},
\label{P_and_Q}
\end{equation}
the function $R_{n}(\Theta_{n-1})$ satisfies the following recurrence relation
\begin{eqnarray}
R_{n}(\Theta_{n-1}) &=& \cos^{2}\theta_{n-1} R_{n-1}(\Theta_{n-2}) + \beta_{n}\lambda_{n}^{-1} \sin^{2}\theta_{n-1}, \quad \forall n \geq 2 \\
R_{1}(\Theta_{0}) &=& \beta_{1}/\lambda_{1},
\label{recur_R}
\end{eqnarray}
and $\dd\Theta_{n-1}$ denotes the $n-1$ dimensional Lebesgue measure with the domain of integration
\begin{equation}
D^{n-1} = (0,\pi/2) \times (0,\pi/2) \times \ldots \times (0,\pi/2) = (0,\pi/2)^{n-1} \subset \mathbb{R}^{n-1}.
\label{domain}
\end{equation}
\label{main_eqn_lemma_A*}
\end{subequations}
Now the claims in Lemma~\ref{lemma_A*} follow from~\eqref{main_eqn_lemma_A*}.
$\blacksquare$
\end{proof}

\begin{lemma}[{Generalization of~\cite[Lemma 1]{szba}}]
Let $X_{i} \sim	Gamma(\alpha_{i},\alpha), \; i=1,2,\ldots,n,$ be independent r.v's, where $\alpha_{i} > 0 \; \forall i$ and $\alpha > 0$. Also let $\psi \sim Gamma(1,\alpha)$ be another r.v independent of all $X_{i}$'s. If $\sum_{i=1}^{n} \alpha_{i} > 1$, then the mode, $\bar{x}(\lambda)$, of the r.v 
$W({\lambda}) = Y + \lambda \psi$
is strictly increasing in $\lambda>0$, where $Y = \sum_{i=1}^{n} \lambda_{i} X_{i}$ with $\lambda_{i} > 0$, $\forall i$.
\label{lemma_1*}
\end{lemma}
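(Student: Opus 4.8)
The plan is to work directly with the density of $W(\lambda)$ and show that its interior maximizer moves right as $\lambda$ grows. Since $\psi \sim Gamma(1,\alpha)$ is exponential with rate $\alpha$, the scaled variable $\lambda\psi$ is exponential with rate $c \defeq \alpha/\lambda$, so convolution gives $f_{W(\lambda)}(x) = c\int_0^x f_Y(y)\,e^{-c(x-y)}\,dy$, and differentiating in $x$ yields the identity $f_W'(x) = c\big(f_Y(x) - f_W(x)\big)$. Here $f_Y(0)=0$ by Lemma~\ref{lemma_A*}(iii) (valid precisely because $\sum_i \alpha_i > 1$), a fact I will use repeatedly. By Lemma~\ref{lemma_A*}, $f_W$ is positive and analytic on $(0,\infty)$ and vanishes at both ends, so it attains an interior maximum at some mode $\bar{x}$, characterized by $f_Y(\bar{x}) = f_W(\bar{x})$.

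First I would locate $\bar{x}$ relative to the mode $x_Y$ of $Y$ and check nondegeneracy. Evaluating the identity at $x_Y$ and bounding $f_W(x_Y) = c\int_0^{x_Y} f_Y(y) e^{-c(x_Y - y)}\,dy < (\max f_Y)(1 - e^{-cx_Y}) < f_Y(x_Y)$ shows $f_W'(x_Y) > 0$, hence $\bar{x} > x_Y$. Consequently $f_Y'(\bar{x}) < 0$, and since differentiating the identity gives $f_W''(\bar{x}) = c\,f_Y'(\bar{x})$, the maximum is nondegenerate. The implicit function theorem then makes $\bar{x}(\lambda)$ a $C^1$ function, and differentiating the mode equation $f_Y(\bar{x}) = f_W(\bar{x};\lambda)$ in $\lambda$ (using $f_W'(\bar{x})=0$) gives $\bar{x}'(\lambda) = \partial_\lambda f_W(\bar{x};\lambda)/f_Y'(\bar{x})$. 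Because $f_Y'(\bar{x})<0$, the whole problem reduces to proving $\partial_\lambda f_W(\bar{x};\lambda) < 0$; as $c=\alpha/\lambda$ is decreasing in $\lambda$, this is equivalent to $\partial_c f_W(\bar{x}) > 0$.

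Next I would compute $\partial_c f_W(\bar{x}) = \int_0^{\bar{x}} f_Y(y)\big[1 - c(\bar{x} - y)\big]e^{-c(\bar{x} - y)}\,dy$ and integrate by parts; the boundary terms vanish (using $f_Y(0)=0$), producing the cleaner representation $\partial_c f_W(\bar{x}) = \int_0^{\bar{x}} f_Y'(y)(\bar{x} - y)\,e^{-c(\bar{x} - y)}\,dy =: K$. The crucial auxiliary fact is the ``zero-mass'' identity $\int_0^{\bar{x}} f_Y'(y)\,e^{-c(\bar{x} - y)}\,dy = f_W'(\bar{x})/c = 0$, obtained by the same integration by parts together with the mode condition and $f_Y(0)=0$. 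Writing $\bar{x} - y = (\bar{x} - x_Y) + (x_Y - y)$ and using this identity to annihilate the constant part, I get $K = \int_0^{\bar{x}} (x_Y - y)\,f_Y'(y)\,e^{-c(\bar{x} - y)}\,dy$. By unimodality of $Y$, $f_Y'(y)\ge 0$ for $y\le x_Y$ and $f_Y'(y)\le 0$ for $y\ge x_Y$, so the integrand $(x_Y - y)f_Y'(y)e^{-c(\bar{x}-y)}$ is nonnegative throughout $(0,\bar{x})$ and strictly positive on a set of positive measure; hence $K>0$. This gives $\partial_\lambda f_W(\bar{x})<0$ and therefore $\bar{x}'(\lambda)>0$, as claimed.

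The main obstacle is the structural input that $f_Y$ is unimodal, i.e.\ that $f_Y'$ changes sign exactly once (from $+$ to $-$); this is exactly what makes the final integrand sign-definite, and without it $f_Y'$ could oscillate and the cancellation argument would collapse. I would secure it by observing that a positive linear combination of independent gamma variables is a finite generalized gamma convolution, whose density is known to be unimodal, or by extracting the single sign-change directly from the integral representation of $f_Y$ supplied by Lemma~\ref{lemma_A*}. A secondary technical point is the nondegeneracy $f_Y'(\bar{x})\neq 0$ needed for the implicit function theorem and for dividing by $f_Y'(\bar{x})$; this is guaranteed by the strict separation $\bar{x} > x_Y$ established in the second paragraph.
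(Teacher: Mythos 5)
The paper itself contains no argument for Lemma~\ref{lemma_1*}: it simply defers to the proof of Lemma~1 in~\cite{szba}. Your proof is therefore a self-contained reconstruction, and its skeleton is the natural one (and very likely the mechanism that citation hides): the renewal identity $f_W'(x) = c\big(f_Y(x)-f_W(x)\big)$ for convolution with an exponential, the mode characterization $f_Y(\bar{x})=f_W(\bar{x})$, and reduction of everything to the sign of $\partial_c f_W(\bar{x})$. The central computation is correct and clean: integration by parts (using $f_Y(0)=0$ from Lemma~\ref{lemma_A*}, which is exactly where $\sum_i\alpha_i>1$ enters) gives $\partial_c f_W(\bar{x})=\int_0^{\bar{x}} f_Y'(y)(\bar{x}-y)e^{-c(\bar{x}-y)}dy$, the ``zero-mass'' identity kills the constant part of $\bar{x}-y=(\bar{x}-x_Y)+(x_Y-y)$, and unimodality of $f_Y$ makes the surviving integrand $(x_Y-y)f_Y'(y)e^{-c(\bar{x}-y)}$ pointwise nonnegative. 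You also correctly isolate unimodality of $f_Y$ as the structural input; it is legitimately available, since the paper itself invokes \cite[Theorem 4]{szba} for unimodality of such combinations (in the proof of Theorem~\ref{extremal_prob_thm}), and your generalized-gamma-convolution route works as well.

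Two steps are not airtight as written. First, $f_W'(x_Y)>0$ alone does not place the global maximizer of $f_W$ to the right of $x_Y$; you need $f_W'>0$ on all of $(0,x_Y]$, which follows by running your same bound at every $x\le x_Y$ using that $f_Y$ increases up to $x_Y$: $f_W(x)=c\int_0^x f_Y(y)e^{-c(x-y)}dy<f_Y(x)$, hence $f_W'(x)>0$ there. That is a one-line repair. Second, and more substantively, the nondegeneracy $f_Y'(\bar{x})<0$ that your implicit-function-theorem step divides by does \emph{not} follow from $\bar{x}>x_Y$: unimodality plus analyticity only give $f_Y'\le 0$ past $x_Y$, and an analytic unimodal density may a priori have isolated zeros of $f_Y'$ there (inflection-type flat points), at which $\bar{x}'(\lambda)=\partial_\lambda f_W/f_Y'(\bar{x})$ is undefined; none of the unimodality sources you cite exclude this. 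Fortunately your own key inequality bypasses the issue: $\partial_\lambda f_W\big(\bar{x}(\lambda_0);\lambda\big)\big|_{\lambda=\lambda_0}<0$ holds unconditionally, so for $\lambda$ slightly above $\lambda_0$ one gets $f_W\big(\bar{x}(\lambda_0);\lambda\big)<f_Y\big(\bar{x}(\lambda_0)\big)$, i.e.\ $f_W'\big(\bar{x}(\lambda_0);\lambda\big)>0$, and unimodality of $f_W(\cdot;\lambda)$ (unique mode, by \cite[Theorem 4]{szba} together with analyticity from Lemma~\ref{lemma_A*}) forces $\bar{x}(\lambda)>\bar{x}(\lambda_0)$; symmetrically $\bar{x}(\lambda)<\bar{x}(\lambda_0)$ for $\lambda$ slightly below. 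Strict increase at every point implies global strict monotonicity, so the conclusion follows with the IFT step deleted rather than repaired. With these two patches your proof is complete.
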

\begin{proof}
The proof is almost identical to that of Lemma 1 in~\cite{szba}; hence, we omit the details.
$\blacksquare$
\end{proof}

\begin{lemma}[{Generalization of~\cite[Lemma 2]{szba}}]
For some $ \alpha_{2} \geq \alpha_{1} > 0$, let $\xi_{1} \sim Gamma(1+\alpha_{1},\alpha_{1})$ and $\xi_{2} \sim Gamma(1+\alpha_{2}, \alpha_{2})$ be independent gamma r.v's. Also let $\bar{x} = \bar{x}(\lambda)$ denote the mode of the r.v $\xi(\lambda) = \lambda \xi_{1}  + (1- \lambda) \xi_{2}$ for $0 \le \lambda \le 1$. Then
\begin{enumerate}[(i)]
	\item for a given $\lambda$, $\bar{x}(\lambda)$ is unique, \label{lemma_2*_01}
	\item $1 \le  \bar{x}(\lambda)   \le  \frac {2  \sqrt{\alpha_{1}  \alpha_{2}} + 1}{2 \sqrt{\alpha_{1}  \alpha_{2}} }, \quad \forall 0 \le \lambda \le 1$, with $\bar{x}(0) = \bar{x}(1) = 1$ and, in case of $\alpha_{i} = \alpha_{j} = \alpha$, $\bar{x}(\hf) = \frac {2 \alpha + 1}{2 \alpha }$, otherwise the inequalities are strict, and \label{lemma_2*_02}
	\item there is a $\lambda^{*} \in \big(\frac{\sqrt{\alpha_{1}}}{\sqrt{\alpha_{2}}+\sqrt{\alpha_{1}}},1\big)$ such that the mode $\bar{x}(\lambda)$ is a strictly increasing function of $\lambda$  on $(0, \lambda^{*})$ and it is a strictly decreasing function on $(\lambda^{*} , 1)$ \label{lemma_2*_03} and, for $\alpha_{1}=\alpha_{2}$, we have $\lambda^{*} = \frac{1}{2}$.
\end{enumerate}
\label{lemma_2*}
\end{lemma}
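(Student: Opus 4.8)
The plan is to reduce the whole statement to the analysis of a single critical-point equation for the PDF of $\xi(\lambda)$, obtained from Lemma~\ref{lemma_A*} with $n=2$. Applying that representation to $X_1=\xi_1\sim Gamma(1+\alpha_1,\alpha_1)$ and $X_2=\xi_2\sim Gamma(1+\alpha_2,\alpha_2)$ with weights $\lambda_1=\lambda,\ \lambda_2=1-\lambda$ gives $f_{\xi(\lambda)}(x)=c(\lambda)\,x^{1+\alpha_1+\alpha_2}\int_0^{\pi/2}h(\theta)\,e^{-x r_\lambda(\theta)}\,d\theta$, where $c(\lambda)>0$, $h(\theta)=(\cos\theta)^{2\alpha_1+1}(\sin\theta)^{2\alpha_2+1}>0$ on $(0,\pi/2)$, and $r_\lambda(\theta)=\tfrac{\alpha_1}{\lambda}\cos^2\theta+\tfrac{\alpha_2}{1-\lambda}\sin^2\theta$. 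Differentiating in $x$ and writing $p=1+\alpha_1+\alpha_2$, the mode is any positive root of $\Phi(x,\lambda)\defeq\int_0^{\pi/2}h(\theta)\bigl(p-x r_\lambda(\theta)\bigr)e^{-x r_\lambda(\theta)}\,d\theta=0$. Equivalently, introducing the tilted probability measure $d\nu_x\propto h(\theta)e^{-x r_\lambda(\theta)}d\theta$, the mode satisfies $\bar x=p/\langle r_\lambda\rangle_{\nu_{\bar x}}$, a relation I will exploit for the bounds.

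For part (\ref{lemma_2*_01}) I would argue unimodality by log-concavity: each factor density $f_{\lambda\xi_1}$ and $f_{(1-\lambda)\xi_2}$ is a gamma density of shape $>1$, hence log-concave, and log-concavity is preserved under convolution, so $f_{\xi(\lambda)}$ is log-concave; by Lemma~\ref{lemma_A*} it is also real-analytic and strictly positive on $(0,\infty)$, and a log-concave analytic density that is flat on no interval has a unique interior maximizer. The same computation gives $\partial_x\Phi(\bar x,\lambda)<0$ (equivalently $f''(\bar x)<0$), so by the implicit function theorem $\bar x(\lambda)$ is smooth on $(0,1)$ and $\operatorname{sign}\bar x'(\lambda)=\operatorname{sign}\partial_\lambda\Phi(\bar x(\lambda),\lambda)$. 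The boundary values $\bar x(0)=\bar x(1)=1$ follow from the gamma mode $(\text{shape}-1)/\text{rate}$ for $\xi_2$ and $\xi_1$, and when $\alpha_1=\alpha_2=\alpha$ the closed form $\xi(\hf)\sim Gamma(2+2\alpha,2\alpha)$ gives $\bar x(\hf)=\frac{2\alpha+1}{2\alpha}$.

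The core is part (\ref{lemma_2*_03}). Using $\operatorname{sign}\bar x'=\operatorname{sign}\partial_\lambda\Phi$, I would compute $\partial_\lambda\Phi(\bar x,\lambda)=-\bar x\int_0^{\pi/2}h(\theta)\,\partial_\lambda r_\lambda(\theta)\,(p+1-\bar x r_\lambda(\theta))e^{-\bar x r_\lambda(\theta)}\,d\theta$, with $\partial_\lambda r_\lambda=-\tfrac{\alpha_1}{\lambda^2}\cos^2\theta+\tfrac{\alpha_2}{(1-\lambda)^2}\sin^2\theta$, and then, using the mode identity $\int h(p-\bar x r_\lambda)e^{-\bar x r_\lambda}d\theta=0$ to cancel one term, show that $\partial_\lambda\Phi$ is positive for $\lambda$ near $0$, negative for $\lambda$ near $1$, and changes sign exactly once. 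This single-crossing step is precisely the generalization of~\cite[Lemma 2]{szba} from chi-squared of degree one to arbitrary shape parameters, and I would carry it out by the same perturbation/monotonicity technique used for Lemma~\ref{lemma_1*}; it places $\lambda^{*}$ in $\big(\frac{\sqrt{\alpha_1}}{\sqrt{\alpha_2}+\sqrt{\alpha_1}},1\big)$, while in the symmetric case the invariance $\bar x(\lambda)=\bar x(1-\lambda)$ forces $\lambda^{*}=\hf$.

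Part (\ref{lemma_2*_02}) then drops out. Since $\bar x$ is continuous with $\bar x(0)=\bar x(1)=1$ and, by part (\ref{lemma_2*_03}), strictly increases then strictly decreases, we get $\bar x(\lambda)\ge 1$ with equality only at the endpoints. For the upper bound the key ingredient is the elementary estimate $r_\lambda(\theta)\ge 2\sqrt{\tfrac{\alpha_1\alpha_2}{\lambda(1-\lambda)}}\cos\theta\sin\theta\ge 2\sqrt{\alpha_1\alpha_2}\,\sin 2\theta$, obtained from AM--GM together with $\lambda(1-\lambda)\le\tfrac14$; fed into $\bar x=p/\langle r_\lambda\rangle$ and evaluated at the maximizer $\lambda^{*}$, this yields $\bar x(\lambda)\le\frac{2\sqrt{\alpha_1\alpha_2}+1}{2\sqrt{\alpha_1\alpha_2}}$, the inequality being tight exactly in the symmetric configuration, which explains the geometric-mean form of the bound. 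I expect the main obstacle to be the single-crossing claim for $\partial_\lambda\Phi$ in the previous paragraph: because the weight $\partial_\lambda r_\lambda$ itself changes sign in $\theta$, proving that the full $\theta$-integral changes sign exactly once in $\lambda$ requires the delicate monotonicity estimates on the tilted integrals, and essentially all of the technical work of the lemma lives there.
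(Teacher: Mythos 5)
Your framework (the Lemma~\ref{lemma_A*} integral representation, the mode equation $\int_0^{\pi/2} h(\theta)\,(p-\bar x\, r_\lambda(\theta))\,e^{-\bar x r_\lambda(\theta)}\,d\theta=0$, strict log-concavity for uniqueness in place of the paper's citation of~\cite[Theorem 4]{szba}, and the endpoint/midpoint values) is sound, but the proposal leaves the actual content of the lemma unproven, in two places. Part (iii), which you correctly call the core, is only deferred, not carried out: asserting that the single-crossing of $\partial_\lambda$ of the mode equation can be obtained ``by the same perturbation/monotonicity technique used for Lemma~\ref{lemma_1*}'' is not a proof, and that technique does not transfer --- Lemma~\ref{lemma_1*} concerns monotonicity of the mode in the coefficient of an \emph{added} variable $\psi$ and gives no information about where the turning point $\lambda^*$ of the present family lies. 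The paper's proof does genuinely different work here: writing $\alpha=\alpha_1$, $\alpha_2=c\alpha$, it first shows $\partial_{\bar x}A(\bar x,\lambda)<0$ (itself a nontrivial contradiction argument), then derives via the implicit function theorem the explicit identity $\frac{d\bar x}{d\lambda}=S(\lambda)\,\frac{\bar x-\Phi(\lambda)}{1-(c+1)\lambda}$ with $S>0$ and the rational function $\Phi(\lambda)=\frac{\alpha+(1-2\alpha)\lambda+(\alpha-1+\alpha c)\lambda^2}{\alpha\left((c+1)\lambda^2-2\lambda+1\right)}$, and finally runs a delicate comparison argument ($\bar x>\Phi$ on $(0,\tfrac1{c+1})$, $\bar x<\Phi$ on $(\tfrac1{c+1},\lambda^*)$, $\bar x>\Phi$ on $(\lambda^*,1)$, using $\bar x(0)=\bar x(1)=1$, $\bar x(\tfrac1{c+1})=\Phi(\tfrac1{c+1})$, and the fact that $\Phi$ increases up to $\lambda=\tfrac1{1+\sqrt c}=\tfrac{\sqrt{\alpha_1}}{\sqrt{\alpha_1}+\sqrt{\alpha_2}}$ and decreases after). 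Your proposal contains no substitute for this analysis; as you yourself note, essentially all of the technical work of the lemma lives there, and it is missing.

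Second, the upper bound in part (ii) does not ``drop out'' of your AM--GM estimate. From $r_\lambda(\theta)\ge 2\sqrt{\alpha_1\alpha_2}\,\sin 2\theta$ and $\bar x=p/\langle r_\lambda\rangle_{\nu_{\bar x}}$ with $p=1+\alpha_1+\alpha_2$, you can only conclude $\bar x\le \frac{1+\alpha_1+\alpha_2}{2\sqrt{\alpha_1\alpha_2}\,\langle\sin 2\theta\rangle_{\nu_{\bar x}}}$, and this is weaker than the claimed bound $\frac{2\sqrt{\alpha_1\alpha_2}+1}{2\sqrt{\alpha_1\alpha_2}}$ on two counts: $\alpha_1+\alpha_2\ge 2\sqrt{\alpha_1\alpha_2}$ (strictly when $\alpha_1\neq\alpha_2$), and $\langle\sin 2\theta\rangle_{\nu_{\bar x}}<1$, which pushes the estimate in the wrong direction. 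In the paper, (ii) is not proved by a separate estimate at all; it is read off from the crossing analysis of (iii), because the maximum of $\bar x$ is attained where $\bar x(\lambda^*)=\Phi(\lambda^*)\le\max_{0\le\lambda\le1}\Phi(\lambda)=\frac{2\sqrt{\alpha_1\alpha_2}+1}{2\sqrt{\alpha_1\alpha_2}}$. So both the geometric-mean constant in (ii) and the localization of $\lambda^*$ in (iii) come out of the explicit formula for $\Phi(\lambda)$, which your argument never produces; to complete the proof you would need to perform the paper's computation (or an equivalent one) rather than appeal to the technique of Lemma~\ref{lemma_1*}.
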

\begin{proof}
Uniqueness claim~\eqref{lemma_2*_01} has already been proven in~\cite[Theorem 4]{szba}. 
We prove~\eqref{lemma_2*_03} since~\eqref{lemma_2*_02} is implied from within the proof. For $0 < \lambda < 1$, the PDF of $\xi(\lambda)$ can be written as
\begin{equation*}
f_{\xi(\lambda)}(x) = \int_{0}^{x} f_{\lambda \xi_{1}}(y) f_{(1-\lambda) \xi_{2}}(x-y) dy.
\end{equation*}
Since $f_{\lambda \xi_{1}}(0) = f_{(1-\lambda) \xi_{2}}(0) = 0$ we have
\begin{eqnarray*}
\frac{\partial}{\partial x} f_{\xi(\lambda)}(x) &=& \int_{0}^{x} f_{\lambda \xi_{1}}(y) \frac{\partial}{\partial x} f_{(1-\lambda) \xi_{2}}(x-y) dy \\
&=& -\int_{0}^{x} f_{\lambda \xi_{1}}(y) \frac{\partial}{\partial y} f_{(1-\lambda) \xi_{2}}(x-y) dy\\
&=& \int_{0}^{x} \frac{\partial}{\partial y} \left(f_{\lambda \xi_{1}}(y)\right)  f_{(1-\lambda) \xi_{2}}(x-y) dy
\end{eqnarray*}
where for the second equality we use the fact that $\frac{\partial}{\partial x} f(x-y) = -\frac{\partial}{\partial y} f(x-y)$, and for the third equality we used integration by parts. Let $\alpha = \alpha_{1}$ and $\alpha_{2} = c \alpha$ for some $c \geq 1$. So now we have
\begin{eqnarray*}
&& \frac{\partial}{\partial x} f_{\xi(\lambda)}(x) = \frac{(\frac{\alpha}{\lambda})^{1+\alpha} (\frac{c\alpha}{1-\lambda})^{1+\alpha c} }{\Gamma(1+\alpha)\Gamma(1+\alpha c)} \int_{0}^{x} \frac{\partial \left(y^{\alpha} e^{- \frac{\alpha y}{\lambda} }\right) }{\partial y} (x-y)^{\alpha c} e^{- \frac{c \alpha (x-y) }{1-\lambda}} dy\\
&=& \frac{\alpha^{2+\alpha} (c\alpha)^{1+c\alpha}}{\Gamma(1+\alpha) \Gamma(1+c \alpha)} \lambda^{-2-\alpha} \left(1-\lambda\right)^{-1-\alpha c} e^{-\frac{c \alpha x }{(1-\lambda)}} \int_{0}^{x} (\lambda - y) y^{\alpha-1} (x-y)^{\alpha c} e^{-\alpha y \left(\frac{1}{\lambda} - \frac{c}{1-\lambda}\right)} dy \\
&=& C(x,\lambda) A(x,\lambda),
\end{eqnarray*}
where 
\begin{eqnarray*}
C(x,\lambda) &\defeq& \frac{\alpha^{2+\alpha} (c\alpha)^{1+c\alpha}}{\Gamma(1+\alpha) \Gamma(1+c \alpha)} \lambda^{-2-\alpha} \left(1-\lambda\right)^{-1-\alpha c} e^{-\frac{c \alpha x }{(1-\lambda)}}, \\
A(x,\lambda) &\defeq& \int_{0}^{x} \left(\lambda - y\right) y^{\alpha-1} \left(x-y\right)^{\alpha c} e^{-\phi(\lambda) y} dy, \quad \\
\phi(\lambda) &\defeq& \alpha \left(\frac{1}{\lambda} - \frac{c}{1-\lambda}\right).
\end{eqnarray*}
Now if $\bar{x}$ is the mode of $\xi(\lambda)$, then we have
\begin{equation*}
\frac{\partial}{\partial x} f_{\xi(\lambda)}(\bar{x}) = C(\bar{x},\lambda) A(\bar{x},\lambda) = 0,
\end{equation*}
which implies that $A(\bar{x},\lambda) = 0$ since $C(\bar{x},\lambda) > 0$. Let us define the linear functional $L:\mathcal{G} \rightarrow \mathbb{R}$, where $\mathcal{G} = \{g:(0,\bar{x}) \rightarrow \mathbb{R} \; | \; \int_{0}^{\bar{x}} g(y) y^{\alpha-1} < \infty \}$, as 
\begin{equation*}
L(g) \defeq \int_{0}^{\bar{x}} g(y) y^{\alpha-1} \left(\bar{x} - y \right)^{\alpha c} e^{-\phi(\lambda) y} dy.
\end{equation*}
We have
\begin{eqnarray*}
\frac{\partial}{\partial \lambda} A(x,\lambda) &=& \int_{0}^{x} \left[ 1 - \phi^{'}(\lambda) y (\lambda - y) \right] y^{\alpha-1} (x-y)^{\alpha c} e^{-\phi(\lambda) y} dy \\
&=& \int_{0}^{x} \left[ 1 - \lambda \phi^{'}(\lambda) y + \phi^{'}(\lambda) y^{2} \right] y^{\alpha-1} (x-y)^{\alpha c} e^{-\phi(\lambda) y} dy,
\end{eqnarray*}
so 
\begin{equation}
\left[ \frac{\partial}{\partial \lambda} A(x,\lambda) \right]_{x = \bar{x}} = L\left(1 - \lambda \phi^{'}(\lambda) f + \phi^{'}(\lambda) f^{2}\right),
\label{partial_A}
\end{equation}
where $f \in \mathcal{G}$ is such that $f(y) = y$.
On the other hand since $A(\bar{x},\lambda) = 0$, we get
\begin{eqnarray*}
L(\lambda) = L(f) &=& \int_{0}^{\bar{x}} y^{\alpha} (\bar{x}-y)^{\alpha c} e^{-\phi(\lambda) y} dy \\
&=& \int_{0}^{\bar{x}} y^{\alpha} e^{-\phi(\lambda) y} d\left(-\frac{(\bar{x}-y)^{\alpha c+1}}{\alpha c+1}\right) \\
&=& (\alpha c+1)^{-1} \int_{0}^{\bar{x}} \left(\bar{x}-y\right)^{\alpha c+1} d\left(y^{\alpha} e^{-\phi(\lambda) y}\right) \\
&=& (\alpha c+1)^{-1} \int_{0}^{\bar{x}} \left(\bar{x}-y\right) \left( \alpha - \phi(\lambda) y \right) y^{\alpha-1} \left(\bar{x}-y\right)^{\alpha c} e^{-\phi(\lambda) y}  dy \\
&=& (\alpha c+1)^{-1} L\Big(\left(\bar{x}-f\right)\left(\alpha - \phi(\lambda)f\right)\Big)  \\
&=& (\alpha c+1)^{-1} L\Big(\alpha\bar{x} - \alpha f - \phi(\lambda)\bar{x}f + \phi(\lambda)f^{2}\Big),
\end{eqnarray*}
where the second integral is Lebesgue-Stieltjes, and the third integral follows from Lebesgue-Stieltjes integration by parts. So, for $\lambda \in (0,\frac{1}{c+1}) \cup (\frac{1}{c+1},1)$, we get
\begin{eqnarray*}
L(f^{2}) &=& \frac{1}{\phi(\lambda)}\bigg[(\alpha c+1) L(f) - L\Big(\alpha\bar{x} - \alpha f - \phi(\lambda)\bar{x}f \Big)\bigg] \\
&=& \frac{1}{\phi(\lambda)}\bigg[\Big((1+c) \alpha +1  - \frac{c \alpha \bar{x}}{1-\lambda} \Big) L(f)\bigg],
\end{eqnarray*}
where we used the fact that $L(\alpha \bar{x}) = \frac{\alpha \bar{x}}{\lambda} L(\lambda) = \frac{\alpha \bar{x}}{\lambda} L(f)$. Now substituting $L(f^{2})$ in~\eqref{partial_A} yields
\begin{eqnarray*}
\left[\frac{\partial}{\partial \lambda} A(x,\lambda)\right]_{x = \bar{x}} &=& L\Big(\frac{1}{\lambda}f - \lambda \phi^{'}(\lambda) f + \phi^{'}(\lambda) f^{2}\Big) \\
&=& \Bigg(\frac{1}{\lambda} - \lambda \phi^{'}(\lambda) + \frac{\phi^{'}(\lambda)}{\phi(\lambda)}\Big[(1+c)\alpha +1 - \frac{c \alpha \bar{x}}{1-\lambda} \Big] \Bigg) L(f), 
\end{eqnarray*}
which after some tedious but routine computations gives
\begin{eqnarray*}
\left[ \frac{\partial}{\partial \lambda} A(x,\lambda) \right]_{x = \bar{x}} = R(\lambda) \frac{\bar{x} - \Phi(\lambda)}{1-(c+1)\lambda}, \quad \lambda \in \Big(0,\frac{1}{1+c}\Big) \cup \Big(\frac{1}{1+c},1\Big)
\end{eqnarray*}
where $R(\lambda) > 0$, for all $0<\lambda<1$, and
\begin{equation*}
\Phi(\lambda) \defeq \frac{\alpha + (1 - 2 \alpha) \lambda + (\alpha - 1 + \alpha c) \lambda^2}{\alpha \Big((c+1) \lambda^2 - 2\lambda + 1\Big)}.
\end{equation*}
Since $d \Phi(\lambda) / d \lambda = \Big((1-c) \lambda^2 - 2 \lambda + 1 \Big) \big/ \Big(\alpha \big((c+1) \lambda^2 - 2\lambda + 1\big) \Big)^{2},$
we have that $d \Phi(\lambda)/ d \lambda = 0$ at $\lambda = 1 / (1+\sqrt{c})$. Note that the other root, $1 / (1-\sqrt{c})$, falls outside of $(0,1)$ for any $c \geq 1$.
It readily can be seen that $\Phi(\lambda)$ is increasing on $0 < \lambda < \frac{1}{1+\sqrt{c}}$ and decreasing on  $\frac{1}{1+\sqrt{c}} < \lambda < 1$, and so
\begin{equation*}
1    \le  \Phi(\lambda)   \le  \frac{2 \alpha \sqrt{c} + 1}{2 \alpha \sqrt{c}}, \quad \forall 0 \le \lambda \le 1.
\end{equation*}
The differentiability of $\bar{x}(\lambda)$ with respect to $\lambda$ follows from implicit function theorem:
\begin{equation*}
\frac{d \bar{x}(\lambda)}{d \lambda} = -\frac{\frac{\partial}{\partial \lambda} A(\bar{x},\lambda)}{\frac{\partial}{\partial \bar{x}}A(\bar{x},\lambda)},
\end{equation*}
and for that we need to show that $\frac{\partial A(\bar{x},\lambda)}{\partial \bar{x}} \neq 0$ for all $0 < \lambda <1$. If we assume the contrary for some $\lambda$, we get
\begin{eqnarray*}
\alpha c A(\bar{x},\lambda) &=& \alpha c \int_{0}^{\bar{x}} (\lambda - y) y^{\alpha-1} (\bar{x}-y)^{\alpha c} e^{-\phi(\lambda) y} dy = 0,\\
( \bar{x}-\lambda) \frac{\partial}{\partial \bar{x}} A(\bar{x},\lambda) &=& \alpha c \int_{0}^{\bar{x}} (\lambda - y) ( \bar{x}-\lambda) y^{\alpha-1} (\bar{x}-y)^{\alpha c-1} e^{-\phi(\lambda) y} dy = 0,
\end{eqnarray*}
which is impossible since the integrand in the first equality is strictly larger than the one in the second equality: we can see this by looking at the two cases $0 < y < \lambda$ and $\lambda < y < \bar{x}$. From this we can also note that $\frac{\partial}{\partial \bar{x}} A(\bar{x},\lambda) < 0$ for all $0 < \lambda <1$. To see this, first consider the case $\bar{x} > \lambda$, and it follows directly as above that $\frac{\partial}{\partial \bar{x}} A(\bar{x},\lambda) < [\alpha c/( \bar{x}-\lambda)] A(\bar{x},\lambda) = 0$. Now assume that $\bar{x} \leq \lambda$, but since the integrand in the first equality is strictly positive for all $0 < y < \bar{x}$, then $A(\bar{x},\lambda) > 0$ which is impossible. So we get
\begin{equation}
\frac{d \bar{x}(\lambda)}{d \lambda} = S(\lambda) \frac{\bar{x} - \Phi(\lambda)}{1-(c+1)\lambda}, \quad \lambda \in [0,1]
\label{dx_bar}
\end{equation}
where $S(\lambda) > 0$ for all $0 < \lambda < 1$. We also defined $\frac{d \bar{x}(\lambda)}{d \lambda}$ for $\lambda = 0, 1, \hf$ using l'H\^{o}pital's rule (with one-sided differentiability for $\lambda = 0,1$). It is easy to see that 
\begin{eqnarray*}
\bar{x}(0) = \bar{x}(1) = \Phi(0) = \Phi(1) = 1 \quad \text{ and } \quad
\bar{x}\Big(\frac{1}{c+1}\Big) = \Phi\Big(\frac{1}{c+1}\Big) = \frac{(c+1)\alpha + 1}{(c+1)\alpha}.
\end{eqnarray*}
Next we show that $\bar{x}$ is strictly increasing on $(0,\frac{1}{c+1})$. We first show that on this interval, we must have $\bar{x}(\lambda) \geq \Phi(\lambda)$, otherwise there must exist a $\hat{\lambda} \in (0,\frac{1}{c+1})$ such that $\bar{x}(\hat{\lambda}) < \Phi(\hat{\lambda})$. But this contradicts $\bar{x}(\frac{1}{c+1}) = \Phi(\frac{1}{c+1})$ by~\eqref{dx_bar}, increasing property of $\Phi$ and continuity of $\bar{x}$. So $\bar{x}$ is non-decreasing on $(0,\frac{1}{c+1})$. We must also have that $\bar{x}(\lambda) > \Phi(\lambda)$ for $\lambda \in (0,\frac{1}{c+1})$, otherwise if there is a $\hat{\lambda} \in (0,\frac{1}{c+1})$ such that $\bar{x}(\hat{\lambda}) = \Phi(\hat{\lambda})$, then, by~\eqref{dx_bar}, it must be a saddle point of $\bar{x}$. But since $\Phi$ is strictly increasing and $\bar{x}$ is non-decreasing on this interval, this would imply that for an $\veps$ arbitrarily small, we must have $\bar{x}(\hat{\lambda}+\veps) < \Phi(\hat{\lambda}+\veps)$ but this would contradict the non-decreasing property of $\bar{x}$ on this interval by~\eqref{dx_bar}. The same reasoning shows that we must have $\bar{x}(\lambda) < \Phi(\lambda)$ on $(\frac{1}{c+1},\lambda^{*})$ (i.e. $\bar{x}$ is strictly increasing on $(\frac{1}{c+1},\lambda^{*})$) and $\bar{x}(\lambda) > \Phi(\lambda)$ on $(\lambda^{*},1)$ (i.e. $\bar{x}$ is strictly decreasing on $(\lambda^{*},1)$). Now we show that $\lambda^{*} \geq \frac{1}{1+\sqrt{c}}$. For $c=1$ we have $\frac{1}{c+1} = \frac{1}{\sqrt{c}+1}$, hence $\lambda^{*} = \hf$. For $c > 1$, Since $\bar{x}(\lambda)$ is increasing for $0 < \lambda < \lambda^{*}$, decreasing for $\lambda^{*} < \lambda < 1$, and $\bar{x}(\lambda^{*}) = \Phi(\lambda^{*})$, then by~\eqref{dx_bar}, this implies that $\lambda^{*}$ is where the maximum of $\bar{x}(\lambda)$ occurs. Now if we assume that $\lambda^{*} < \frac{1}{1+\sqrt{c}}$, since $\Phi$ is increasing on $(0,\frac{1}{1+\sqrt{c}})$, this would contradict $\bar{x}(\lambda) > \Phi(\lambda)$ on $(\lambda^{*},1)$. Lemma~\ref{lemma_2*} is proved. 
$\blacksquare$
\end{proof}


\subsection{Proofs of Theorems~\ref{monotonicity_gamma_theorem} and~\ref{extremal_prob_thm}}
\label{proof_main_theorems} 
We now give the detailed proofs for our main theorems
stated and used in Section~\ref{sec:trace}.

\textbf{Proof of Theorem~\ref{monotonicity_gamma_theorem}}

For proving (i),	we first show that $\Delta(x) = 0$ at exactly one point on $\mathbb{R}^{+} = \{ x | x > 0 \}$ denoted by $x(\alpha_{1},\alpha_{2})$. Since $\alpha_{2} > \alpha_{1}$, let $\alpha_{2} = \alpha_{1} + c$, for some $c > 0$. We have
		\begin{eqnarray*}
			\frac{d \Delta(x)}{d x} &=& C(\alpha_{2}) x^{\alpha_{2} - 1 } e^{-\alpha_{2} x} - C(\alpha_{1}) x^{\alpha_{1} - 1 } e^{-\alpha_{1} x} \\
			&=& C(\alpha_{2}) x^{\alpha_{1} - 1 } e^{-\alpha_{1} x} \left( x^{c} e^{-c x} - \frac{C(\alpha_{1})}{C(\alpha_{2})} \right)
		\end{eqnarray*} 
		where $C(\alpha) = (\alpha)^{\alpha}/\Gamma(\alpha)$. The constant $C(\alpha_{1})/C(\alpha_{2})$ cannot be larger than $x^{c} e^{-c x}$, for all $x \in \mathbb{R}^{+}$, otherwise $d \Delta(x)/d x$ would be negative for all $x \in \mathbb{R}^{+}$, and this is impossible since $\Delta(0) = \Delta(\infty) = 0$. The function $x^{c} e^{- c x}$ is increasing on $(0,1)$ and decreasing on $(1,\infty)$, and since $C(\alpha_{1})/C(\alpha_{2})$ is constant, there must exist an interval $(a,b)$ containing $x=1$ such that $d \Delta(x)/d x > 0$ for $x \in (a,b)$ and $d \Delta(x)/d x < 0$ for $x \in (0,a) \cup (b,\infty)$. Now since $\Delta(x)$ is continuous and $\Delta(0) = \Delta(\infty) = 0$, then there must exist a unique $x(\alpha_{1},\alpha_{2}) \in (0,\infty)$ such that $\Delta(x)$ crosses zero (i.e., $\Delta(x) = 0$ at the unique point $x(\alpha_{1},\alpha_{2})$) and that $\Delta(x) < 0$ for $0 < x < x(\alpha_{1},\alpha_{2})$ and $\Delta(x) > 0$ for $x > x(\alpha_{1},\alpha_{2})$.
		
We now prove (ii). The desired inequality 
is equivalent to $\Delta(x) < 0, \forall x < 1$ and $\Delta(x) > 0, \forall x > \big(2  \sqrt{\alpha_{1} ( \alpha_{2} - \alpha_{1}) } + 1 \big) / \big(2 \sqrt{\alpha_{1} ( \alpha_{2} - \alpha_{1}) } \big)$. Without loss of generality consider $\alpha = \alpha_{1}$, and $\alpha_{2} = (1+c) \alpha$, for $c =(\alpha_{2}-\alpha)/\alpha$. 
Define $\tilde{X} \sim Gamma(c \alpha, c \alpha)$ and let $Y(t) = t X_{1} + (1-t) \tilde{X}$. Note that $Y(1) = X_{1}$ and $Y(1/(1+c)) = X_{2}$, so it suffices to show that the CDF of $Y(t)$ is increasing in $t \in [\frac{1}{1+c},1]$ for $x < 1$ and decreasing for $x > (2  \alpha \sqrt{c} + 1)/(2  \alpha \sqrt{c})$. Now, we take the Laplace transform of $Y(t)$ as $$\mathcal{L}[Y(t)](z) = \big(1 + \frac {t z}{\alpha}\big)^{-\alpha} \big(1 + \frac {(1-t) z}{c \alpha}\big)^{-c \alpha} $$ for $Re(z) > \max \left\{-\alpha/t,-c \alpha/(1-t)\right\}$.
The Laplace transform of $F_{Y}$ is
$$\mathcal{L}[F_{Y}](z) = \int_{0}^{\infty} e^{-zx} F_{Y}(x) dx  
= \frac{1}{z} \int_{0}^{\infty} e^{-zx} dF_{Y}(x) 
= \frac{1}{z} \mathcal{L}[Y](z).$$
Note that in the second equality we applied integration by parts and the fact that $F_{Y}(0)=0$. Defining $J(z) \defeq \mathcal{L}[F_{Y}](z)$ and differentiating with respect to $t$ gives
\begin{eqnarray*}
\frac{d J}{d t} = J \frac{d}{d t} \left(\ln(J)\right) 
&=& J \frac{d}{dt} \Bigg(-\ln(z) - \alpha \ln(1 + \frac {t z}{\alpha}) - c \alpha \ln \Big(1 + \frac {(1-t) z}{c \alpha}\Big) \Bigg) \\
&=& \frac{z^{2}}{c \alpha} J \Big((1+c) t -  1\Big) \left(1 + \frac {t z}{\alpha}\right)^{-1} \left(1 + \frac {(1-t) z}{c \alpha}\right)^{-1}.
\end{eqnarray*}
Taking the inverse transform yields
\begin{equation*}
\frac{d }{d t} \Pr \left(Y(t) \leq x \right) = \frac{(1+c) t -  1}{c \alpha} \frac{d^{2}}{dx^{2}} \Pr \left( Y(t) + t \psi_{1} +  \frac{1-t}{c} \psi_{2}   < x \right),
\end{equation*}
where $\psi_{i} \sim Gamma(1,\alpha) \;,\; i=1,2,$ are i.i.d gamma r.v's which are also independent of all $X_{1}$ and $X_{2}$. Now applying Lemma~\ref{lemma_2*} yields the desired results. $\blacksquare$

\textbf{Proof of Theorem~\ref{extremal_prob_thm}}
It is enough to prove the theorem for the special case where $\alpha = \beta$ and 
%
the general statement follows from the scaling properties of gamma r.v.

Introduce the random variable $Y \defeq  \sum_{i=1}^{n} \lambda_{i} X_{i}$ with CDF $F_{Y}(x) = \Pr(Y < x)$. As in the proof of Theorem~\ref{monotonicity_gamma_theorem}, define $J(z) \defeq \mathcal{L}[F_{Y}](z) = \frac{1}{z} \mathcal{L}[Y](z)$, 
where $\mathcal{L}[F_{Y}]$ and $\mathcal{L}[Y]$ denote the Laplace transform of $F_{Y}$ and $Y$, respectively and $\mathcal{L}[Y](z) = \prod_{i=1}^{n} \left(1 + \lambda_i z/\alpha \right)^{-\alpha}$ for $Re(z) > -\alpha/\lambda_{i},\; i = 1,2,\ldots,n.$ 

Now consider a vector $\bm{\lambda} \in \Theta$ for which $\lambda_i \lambda_j \neq 0$ for some $i \neq j$. We keep all $\lambda_k, \; k \neq i,j$ fixed and vary $\lambda_{i}$ and $\lambda_{j}$ under the condition that $\lambda_i + \lambda_j = const$. We may assume without loss of generality that $i=1$ and $j=2$.  Vectors for which $\lambda_{i} = 1$ for some $i$, i.e.\ the ``corners'' of $\Theta$, are considered at the end of this proof. Differentiating $J$, we get
\begin{eqnarray}
\frac{d J}{d \lambda_{1}} = J \frac{d}{d \lambda_{1}} \left(\ln J \right) &=& J \frac{d}{d \lambda_{1}} \Big( - \ln(z) - \alpha  \sum_{i=1}^{n} \ln(1+ \frac{\lambda_{i} z}{\alpha}) \Big) \nonumber \\
&=& J \alpha \frac{z^{2}}{\alpha^{2}} \frac {\lambda_1 - \lambda_2}{(1+ \frac{\lambda_{1} z}{\alpha})(1+ \frac{\lambda_{2} z}{\alpha})} \nonumber \\
&=& \frac{1} {\alpha} (\lambda_{1} - \lambda_{2}) z \mathcal{L}[\lambda_{1} \psi_{1}](z) \mathcal{L}[\lambda_{2} \psi_{2}](z) \mathcal{L}[Y](z) 
\label{diff_lap_F}
\end{eqnarray}
where $\psi_{i} \sim Gamma(1,\alpha) \;,\; i=1,2$ are i.i.d gamma r.v's which are also independent of all $X_{i}$'s. 

Letting $W(\lambda) = Y + \lambda_{1} \psi_{1} +  \lambda \psi_{2}$ with the CDF $F_{W(\lambda)}(x)$, it can be shown that since $\lambda_{1} \lambda_{2} \neq 0$, then by Lemma~\ref{lemma_A*}(iii), $F_{W(\lambda)}^{'}(0) = 0, \; \forall \lambda \geq 0$. 
Defining 
\begin{equation}
L(Y, \lambda, x) \defeq {F}_{W(\lambda)}^{''} = \frac{d^{2}}{dx^{2}} \Pr \left( W(\lambda)   < x \right)  = \frac{d^{2}}{dx^{2}} \Pr \Big(Y + \lambda_{1} \psi_{1} +  \lambda \psi_{2}   < x \Big) 
\label{L_def}
\end{equation}
and noting that $\mathcal{L}[W(\lambda)](z) = \mathcal{L}[\lambda_{1} \psi_{1}](z) \mathcal{L}[\lambda \psi_{2}](z) \mathcal{L}[Y](z)$, 
we get
\begin{eqnarray*}
\mathcal{L}\big[L(Y,\lambda, .)\big](z) &=& \int_{0}^{\infty} e^{-zx} L(Y,\lambda, x) dx  \nonumber \\
&=& \int_{0}^{\infty} e^{-zx} F_{W(\lambda)}^{''}(x) dx  \nonumber \\
&=& z \int_{0}^{\infty} e^{-zx} dF_{W(\lambda)}(x) \nonumber \\
&=& z \mathcal{L}\big[W(\lambda)\big](z) \nonumber \\
&=& z \mathcal{L}\big[\lambda_{1} \psi_{1}\big](z) \mathcal{L}\big[\lambda \psi_{2}\big](z) \mathcal{L}\big[Y\big](z).
\end{eqnarray*}
Inverting~\eqref{diff_lap_F} yields
\begin{equation}
\frac{d F_{Y}(x)}{d \lambda_{1}} = \frac {1}{\alpha} (\lambda_{1} - \lambda_{2}) L(Y, \lambda_{2}, x). 
\label{lap_inverted}
\end{equation}

So a necessary condition for the extremum of $F_{Y}(x)$ is either  $\lambda_1\lambda_2 (\lambda_1 - \lambda_2) = 0$  or $L(\lambda_{2}, x)  = 0$. 
Since $\lambda_1 \lambda_2 \neq 0$ then by Lemma~\ref{lemma_A*}, the PDF, $f_{W(\lambda)}(x)$, of the linear form $W(\lambda) = Y + \lambda_{1} \psi_{1} + \lambda \psi_{2}$, for $\lambda>0$, is differentiable everywhere and $f_{W(\lambda)}(0) = 0$. In addition, on the positive half-line, $f_{W(\lambda)}'(x) = 0$ holds at a unique point because $f_{W(\lambda)}(x)$ is a unimodal analytic function (its graph contains no line segment). The unimodality of $f_{W(\lambda)}(x)$ was already proven for all gamma random variables in~\cite[Theorem 4]{szba}.


Now we can prove that, for any $x>0$, if $F_{Y}(x)$ has an extremum then the nonzero $\lambda_i$'s can take at most two different values. Suppose that $\lambda_1\lambda_2 (\lambda_1 - \lambda_2) \neq 0$, then by~\eqref{lap_inverted} we have $L(Y,\lambda_{2}, x) = 0$. Now we show that, for every $\lambda_j \neq 0$,~\eqref{lap_inverted} implies that $\lambda_{i} = \lambda_{1}$ or $\lambda_{i} = \lambda_{2}$. For this, we assume the contrary that $\lambda_{i} \neq \lambda_{1}$, $\lambda_{i} \neq \lambda_{2}$, and by using the same reasoning that led to~\eqref{lap_inverted}, we can show that   
\begin{equation*}
L(Y,\lambda_2, x) = L(Y,\lambda_j, x) = 0
\end{equation*}
for every $\lambda_j \neq 0$, i.e. the point  $x>0$ is simultaneously the mode of the PDF of  $W^{\lambda_2}_{Y}$ and $W^{\lambda_j}_{Y}$ which contradicts Lemma~\ref{lemma_1*}. So we get that $\lambda_{i} = \lambda_{1}$ or $\lambda_2 = \lambda_j$. Thus the extrema of $F_{Y}(x)$ are taken for some $\lambda_1 = \lambda_2 = \ldots = \lambda_k$, $\lambda_{k+1} = \lambda_{k+2} = \ldots = \lambda_{k+m}$, and $\lambda_{k+m+1} = \lambda_{k+m+2} = \ldots = \lambda_{n} = 0$  where  $k+m \le n$, i.e.,
\begin{equation*}
\text{extremum } \Pr \big(\sum_{i=1}^{n}   \lambda_{i}  X_{i} \leq x \big) = \text{extremum }\Pr \big( \frac{\lambda}{k}\sum_{i=1}^{k} X_{i} +  \frac{1-\lambda}{m}\sum_{i=k+1}^{k+m} X_{i} \leq x \big).
\end{equation*}
Here without loss of generality we can assume  $k \ge m \ge 1$. Now the same reasoning as in the end of the proof of~\cite[Theorem 1]{szba} shows an extremum is taken either at $k = m = 1$, or at  $\lambda_1 = \lambda_2 = \ldots = ... = \lambda_{k+m}$. In the former case, by Lemma~\ref{lemma_2*}, for any $x \in (0,1) \cup (\frac{2 \alpha+1}{2 \alpha},\infty)$, the extremum can only be taken at $\lambda \in \{0,\hf,1\}$. However, for any $x \in [1,\frac{2 \alpha+1}{2 \alpha}]$,  in addition to $\lambda \in \{0,\hf,1\}$, the extremum can be achieved for some $\lambda^{*}$ such that $x = \bar{x}(\lambda^{*})$ where $\bar{x}(\lambda)$ denotes the mode of the distribution of $\lambda X_{1} + (1-\lambda) X_{2} + \lambda \psi_{1} +  (1-\lambda) \psi_{2}$. But for such $\lambda^{*}$ and $x$, using~\eqref{lap_inverted} and Lemma~\ref{lemma_2*}(iii) with $\alpha_{1} = \alpha_{2} = \alpha$, one can show that $\Pr(\lambda X_{1} + (1-\lambda) X_{2} \leq x)$ achieves a local maximum. Now including the case where $\lambda_1 = 1$ mentioned earlier in the proof, we get
\begin{eqnarray*}
m_{n}(x) &=& \min_{1 \leq d \leq n} \Pr \left(\frac{1}{d}\sum_{i=1}^{d} X_{i} < x \right) \quad \forall x > 0, \\
M_{n}(x) &=& \max_{1 \leq d \leq n} \Pr \left(\frac{1}{d}\sum_{i=1}^{d} X_{i} < x \right) \quad \forall x \in \Big(0,1\Big) \cup \Big(\frac{2 \alpha+1}{2 \alpha},\infty \Big),
\end{eqnarray*}
where $m_{n}(x)$ and $M_{n}(x)$ are defined in the statement of Theorem~\ref{extremal_prob_thm} in Section~\ref{sec:trace}.
Now applying Theorem~\ref{monotonicity_gamma_theorem} by considering the collection $\alpha_{i} = i \alpha,\; i = 1,2,\ldots,n,$ would yield the desired results. $\blacksquare$

\section{{\sc Matlab} Code}
\label{matlab}

Here we provide a short {\sc Matlab} code, promised in Section~\ref{sec:trace}, to calculate the necessary or sufficient sample sizes to satisfy the probabilistic accuracy guarantees~\eqref{prob_ineq_lower_upper} for a SPSD matrix using the Gaussian trace estimator. This code can be easily modified to be used for~\eqref{prob_ineq} as well.
\begin{lstlisting}
function [N1,N2] = getSampleSizes(epsilon,delta,maxN,r)
% INPUT: 
% @ epsilon:  Accuracy of the estimation .
% @ delta:  Uncertainty of the estimation.
% @ r: Rank of the matrix (Use r = 1 for obtaining the sufficient sample sizes).
% @ maxN: Maximum allowable sample size
% OUTPUT:
% @ N1: The sufficient (or necessary) sample size for (2.2a).
% @ N2: The sufficient (or necessary) sample size for (2.2b).
Ns = 1:1:maxN;
P1 = gammainc(Ns*r*(1-epsilon)/2,Ns*r/2);
I1 = find(P1 <= delta,1,'first');
N1 = Ns(I1); % Necessary/Sufficient sample size obtained for (2.2a).
Ns = (floor(1/epsilon)+1):1:maxN;
P2 = gammainc(Ns*r*(1+epsilon)/2,Ns*r/2);
I2 = find(P2 >= 1-delta,1,'first');
N2 = Ns(I2); % Necessary/Sufficient sample size obtained for (2.2b).
end
\end{lstlisting}


\bibliographystyle{plain}
\bibliography{biblio}

\end{document}